\newtheorem{example}[theorem]{Example}
\newcommand{\twist}[1]{{\tt twist}(#1)}
\newcommand{\sq}[1]{{\tt sq}(#1)}
\newcommand{\bfrho}{{\boldsymbol \rho}}
\newcommand{\bfdel}{{\boldsymbol \delta}}
\newcommand{\V}[1]{{\bm{\mathbf{\MakeLowercase{#1}}}}} 
\newcommand{\M}[1]{{\bm{\mathbf{\MakeUppercase{#1}}}}} 
\newcommand{\T}[1]{\boldsymbol{\mathscr{\MakeUppercase{#1}}}} 
\newcommand{\TA}{\T{A}}
\newcommand{\TB}{\T{B}}
\newcommand{\TC}{\T{C}}
\newcommand{\TX}{\T{X}}
\newcommand{\TU}{\T{U}}
\newcommand{\TV}{\T{V}}
\newcommand{\TS}{\T{S}}
\newcommand{\TW}{\T{W}}
\newcommand{\TY}{\T{Y}}
\newcommand{\TQ}{\T{Q}}
\newcommand{\TD}{\T{D}}
\newcommand{\TG}{\T{G}}
\newcommand{\TI}{\T{I}}
\newcommand{\TP}{\T{P}}
\newcommand{\Vu}{\V{u}}
\newcommand{\Vv}{\V{v}}
\newcommand{\MG}{\M{G}}
\newcommand{\MZ}{\M{Z}}
\newcommand{\MM}{\M{M}}
\newcommand{\MA}{\M{A}}
\newcommand{\MX}{\M{X}}
\newcommand{\MS}{\M{S}}
\newcommand{\MV}{\M{V}}
\newcommand{\MU}{\M{U}}
\newcommand{\MD}{\M{D}}
\newcommand{\MB}{\M{B}}
\newcommand{\MQ}{\M{Q}}
\newcommand{\MW}{\M{W}}
\newcommand{\bfa}{{\bf a}}
\newcommand{\MR}{\M{R}}
\newcommand{\tp}{^{\rm P}} 
\newcommand{\starM}{{\star_{\rm M}}}
\newcommand{\starB}{{\star_{\rm B}}}
 \newcommand{\bea}{ \left[ \begin{matrix} }
 \newcommand{\eea}{ \end{matrix} \right] }
\definecolor{blue}{rgb}{0,0,1}
\definecolor{red}{rgb}{1,0,0}
\definecolor{green}{rgb}{.5,.8,.5}
\newcommand\LH[1]{\textcolor{green}{LH: #1}}
\title{Tensor-Tensor Products for Optimal Representation and Compression} 
\author{Misha Kilmer%
\thanks{Department of Mathematics, Tufts University, Medford, MA (\email{misha.kilmer@tufts.edu}).}%
\and
Lior Horesh%
\thanks{Mathematics of AI, IBM Thomas J. Watson Research Center, Yorktown Heights, NY (\email{lhoresh@us.ibm.com}).}%
\and
Haim Avron%
\thanks{Department of Applied Mathematics, Tel Aviv university, Tel Aviv-Yafo, Israel, (\email{haimav@tauex.tau.ac.il}).}%
\and
Elizabeth Newman%
\thanks{Department of Mathematics, Emory University, Atlanta, GA (\email{elizabeth.newman@emory.edu}).}%
}
\begin{document}
\maketitle


\begin{abstract}
In this era of big data, data analytics and machine learning, it is imperative to find ways to compress large data sets such that intrinsic features necessary for subsequent analysis are not lost.  The traditional workhorse for data dimensionality reduction and feature extraction has been the matrix SVD, which presupposes that the data has been arranged in matrix format.  Our main goal in this study is to show that high-dimensional data sets are more compressible when treated as tensors (aka multiway arrays) and compressed via tensor-SVDs under the 
tensor-tensor product structures in \cite{KilmerMartin2011,LAA}. 
We begin by proving 
Eckart Young optimality results for families of 
tensor-SVDs under two different truncation strategies.  
As such optimality properties can be proven in both matrix and tensor-based algebras, a fundamental question arises: does the tensor construct subsume the matrix construct in terms of representation efficiency?  The answer is yes, as shown when we prove that a tensor-tensor representation of an equal dimensional spanning space can be superior to its matrix counterpart.   We then investigate how the compressed
representation provided by the truncated tensor-SVD is related both theoretically and in compression performance to its closest tensor-based analogue, truncated HOSVD \cite{DeLath2000,DeLathDimRedux04},
thereby showing the potential advantages of our tensor-based algorithms.
Finally, we propose new tensor truncated SVD variants, namely multi-way tensor SVDs, provide further approximated representation efficiency
and discuss under which conditions they are considered optimal.  We conclude with a numerical study demonstrating the utility of the theory.

\end{abstract}

\section{Significance}
 Much of real-world data is inherently multidimensional, often involving high-dimensional correlations. 
 However, many data analysis pipelines process data as two-dimensional arrays (i.e., matrices) even if the data is naturally represented in high-dimensional.
 The common practice of matricizing high-dimensional data is due to the ubiquitousness and strong theoretical foundations of matrix algebra.  
 
 Over the last century, dating back to 1927~\cite{hitchcock1927tensor} with the introduction of the canonical (CP) decomposition, various tensor-based approximation 
 techniques have been developed. 
 These high-dimensional techniques have demonstrated to be instrumental in a broad range of application area, yet, hitherto, none have been theoretically proven to outperform matricization in general settings. This lack of matrix-mimetic properties and theoretical guarantees has been impeding adaptation of tensor-based techniques as viable mainstream data analysis alternatives. 
 
 
 %
 In this study, we propose preserving data in a native, tensor-based format while processing it using new matrix-mimetic, tensor-algebraic formulations.  Considering a general family of tensor algebras, we prove an Eckart-Young optimality theorem for truncated tensor representations.  
Perhaps more significantly, we prove these tensor-based reductions are superior to traditional matrix-based representations.  
 Such results distinguish the proposed approach from 
 other tensor-based approaches.  
We believe this work will lead to revolutionary new ways in which data with high-dimensional correlations are treated.

%

\section{Introduction}

\subsection{Overview}
Following the discovery of the spectral decomposition by 
Lagrange in 1762, the Singular Value Decomposition (SVD) was discovered independently by  
Beltrami and 
Jordan, in 1873 and 1874 respectively. Further generalization of the decomposition came independently by  
Sylvester in 1889, and by Autonne in 1915 \cite{stewart1993early}. Yet, probably one of the most notable theoretical results associated with he decomposition, is due to Eckart and 
Young that provided the first optimality proof of the decomposition back in 1936 \cite{eckart1936approximation}.


The use of SVD 
in data analysis is ubiquitous. From a statistical point of view, the singular vectors of a (mean subtracted) data matrix represent the principle component directions; the directions in which there is maximum variance correspond to the largest singular values. 
However, the SVD is historically well-motivated by spectral analysis of linear operators.  
Since data matrices that arise in typical data analysis tasks are essentially just rectangular arrays of data which may not correspond directly to either statistical interpretation nor representations of linear transforms, the prevalence of the the SVD in data analysis applications requires further investigation.

The utility of the SVD in the context of data analysis is due to two key factors: the well-known Eckart-Young theorem (also known as the Eckart-Young-Minsky theorem), and the fact that the SVD (or in some cases a partial decomposition or high-fidelity approximation) can be efficiently 
computed (see, for example, \cite{Watkins} and references therein), relative to the matrix dimensions and/or desired rank of the partial decomposition.  Formally, the Eckart-Young theorem gives the solution to the problem of finding the best (in the Frobenius norm or 2-norm) of a rank $k$ approximation to a matrix with rank greater than $k$, in terms of the first $k$ terms of the SVD expansion of that matrix. 
The theorem implies, in some informal sense, that the majority of the informational content is contained by the dominant singular subspaces (i.e. the span of the singular vectors corresponding to the largest singular values), opening the door for compression, efficient representation, de-noising, etc. The SVD motivates modeling data as matrices, even in cases where the more natural model is an high dimensional array (a tensor) -- a process known as matricization.   

Nevertheless, there is intuitively an inherent disadvantage of matricization of data which could naturally be represented as a tensor. For example, a gray scale image is naturally represented as a matrix of numbers, but a video is naturally represented as tensor since there is an additional dimension - time.  Preservation of the dimensional integrity of the data can be imperative for subsequent processing of the data while accounting for high-dimensional correlations embedded in the structures the data is organized in. Even so, in practice, there is often a surprising dichotomy between the data representation and the algebraic constructs employed for its  processing. Thus, in the last century there has been efforts to define decomposition of tensorial structures, e.g. CP~\cite{hitchcock1927tensor,CarrollChang70,Harshman70}, Tucker~\cite{Tuck1963a}, HOSVD \cite{DeLath2000} and Tensor-Train~\cite{oseledets2011tensor}. However, for none of the aforementioned decompositions there is known Eckart-Young-like result. 

In this study, we attempt to close this gap by proving a general Eckart-Young optimality theorem for a tensor-truncated representation. We also consider the  optimal data compression with a third-order (or higher order) tensor vs optimal compression of the same data oriented in matrix form.  

An Eckart-Young-like theorem must revolve around some tensor decomposition and a metric.  We consider tensor decompostions built around the idea of the t-product presented in \cite{KilmerMartin2011} and the tensor-tensor product extensions of a similar vein in \cite{LAA}.  In their original work, the authors define a tensor-tensor product between third-order tensors, and corresponding algebra in which notions of identity, orthogonality, and transpose are all well-defined.  These ideas were generalized to higher order tensors in \cite{Martinetal2013}.  These lead to the definition of a tensor-SVD based on the t-product, for which the authors show there exists an Eckart-Young type of optimality result in the Frobenius norm.  Other popular tensor decompositions (e.g. Tucker, HOSVD, CP,  \cite{Tuck1963a,DeLath2000,hitchcock1927tensor,KoldaBader}) do not lend themselves easily to a similar analysis, either because direct truncation does not lead to an optimal lower-term approximation, or because they lack analogous definition of orthogonality and energy preservation.  In this work, we show that the HOSVD can be interpreted as a special case of the class of tensor-tensor products we consider, and as such, we are able to explain why truncation of the HOSVD will, in general, not give an optimal compressed representation when compared to the proposed truncated tensor SVD approach.

The t-product is orientation dependent, meaning that the ability to compress the data in a meaningful way depends on the orientation of the tensor (e.g. how the data is organized in the tensor).
Consider, for example, a collection of $\ell$ gray scale images of size $m \times n$.   Those images can be placed into a tensor as lateral slices (resulting in an $m \times \ell \times n$ array), but they can also be rotated first and then placed as lateral slices (resulting in a $n \times \ell \times m$ array),  etc.   In many applications, there are good reasons to keep the second, lateral, dimension fixed (i.e. representing time, total number of samples, etc.), but in others there may be no obvious reason to preferentially treat one of the other two dimensions.   Thus, in this paper we also consider variants of the original t-product approach that can give optimal approximations to the tensorized data without handling
one spatial orientation
differently than another, 
while still offering improved representation over 
treating the data in matricized format.  

We primarily limit the discussion to third-order tensors, though in the final section we discuss how the ideas generalize to higher-order as well.  Indeed, the potential
for even greater compression gain for higher-order representations of the data exists, provided the data has higher-dimensional correlations to be exploited.

 

\subsection{Paper Organization}
In \Cref{sec:back}, we give background notation and definitions.  In \Cref{sub:t-svd} we define the t-SVDM, and prove an Eckart-Young-like theorem for it.  
\Cref{sec:latent} and \Cref{sec:superior} are devoted to a theorem and discussion about when and why some data is more amenable to optimal representation through use of the truncated t-SVDM than through the matrix SVD.  In \Cref{sec:new}, we propose three new approaches to further compress the third order data by a more equal treatment of the first and third dimensions.  We 
show in  \Cref{sec:hosvd} that the HOSVD decomposition can be interpreted as a specific case of the proposed 
tensor-tensor product framework. Using this observation, we can prove that the truncated HOSVD cannot in general provide as much compression as our approach.  \Cref{sec:new} discusses multisided tensor compression. \Cref{sec:numerical} contains a numerical study and highlights extentions to higher order data.  A summary and future work are the subject of \Cref{sec:conclusions}.    

\if0
Following the discovery of the spectral decomposition by Joseph Louis Lagrange in 1762, the Singular Value Decomposition (SVD) was discovered independently by  Eugenio Beltrami and Camille Jordan, in 1873 and 1874 respectively. Further generalization of the decomposition came independently by  James Joseph Sylvester in 1889, and by Autonne in 1915. The first optimality proof is due to Carl Eckart and Gale J. Young in 1936 \cite{eckart1936approximation}. 
 
 The use of higher order tensor representations (i.e. multi-dimensional array representations) is natural in science and engineering applications. 

Consider for example 
 data 
  stored in a format consistent with the variable labeling to which it relates:  i.e. vertical, horizontal and depth dimensions. 
  This arrangement may for an instance  
  correspond to variables in two spatial dimensions and a time dimension, as one finds in video settings.  Nevertheless, often when compression or analysis of the data is considered, 
  the data is first ``matricized``  (i.e. unrolled into matrix form) or vectorized, and then linear algebraic tools are used to decompose, analyze, or compress the data. 

\LH{I assume that a fundamental question associated with high dimensional correlation would be whether we ever form structures that involve tri-nomial $x_i y_j z_k$ or higher order elements that can represent natively high order moments?}

Recently, researchers have been discovering that processing data in tensor form through the use of certain known decompositions relevant to the specific application can capture correlations or patterns in data that are non-obvious when the data is treated in a matricized format \cite{vasilescu2002,vasilescu2002multilinear,smilde2005multi,dunlavy2011multilinear,HaoEtAl2013,ZMWH17,newman2019nonnegative}.  
Moreover, structural redundancies can be captured via tensor decompositions in ways that allow for better compression of data.  But first, that data needs to be posed
in a 
multi-dimensional format in which this can be revealed.
\fi

\section{Background}  \label{sec:back}

For the purposes of this paper, a tensor is a multi-dimensional array, and the order of the tensor is defined as the number of dimensions of this array.   As we are concerned with third-order tensors throughout most of the discussion, we limit our notation and definitions to the 
third-order case here, and generalize to higher order in the final section.

\subsection{Notation and Indexing}
 A third-order tensor $\TA$ is an object  in $\mathbb{C}^{m \times p \times n}$.  
Its Frobenius norm, $\| \TA \|_F$, is analogous to the matrix case:  that is $\| \TA \|_F^2 = \sum_{i,j,k} |\TA_{i,j,k}|^2$. 
We use {\sc Matlab} notation for entries: $\TA_{i,j,k}$ denotes the entry at row $i$ and column $j$ of the matrix going $k$ ``inward".   
The fibers of tensor $\TA$ are defined by fixing two indices. Of note are the tube fibers,  written as
 $\TA_{i,j,:}$ or $\V{a}_{i,j}$, $i=1\!\!:\!\!m, j=1\!\!:\!\!p$. A slice of a third-order tensor $\TA$ is a two-dimensional array defined by fixing one index. 
Of particular note are the frontal and lateral slices, as depicted in \Cref{fig:ten}.  The $i^{th}$ frontal slice is expressed as 
$\TA_{:,:,i}$ and also referenced as $\MA^{(i)}$ for convenience in later definitions.  The $j^{th}$ lateral slice would be $\TA_{:,j,:}$ or equivalently expressed as $\vec{\TA}_j$.  

\begin{figure*}[t]\label{fig:ten}
\begin{center}
\includegraphics[scale=.4]{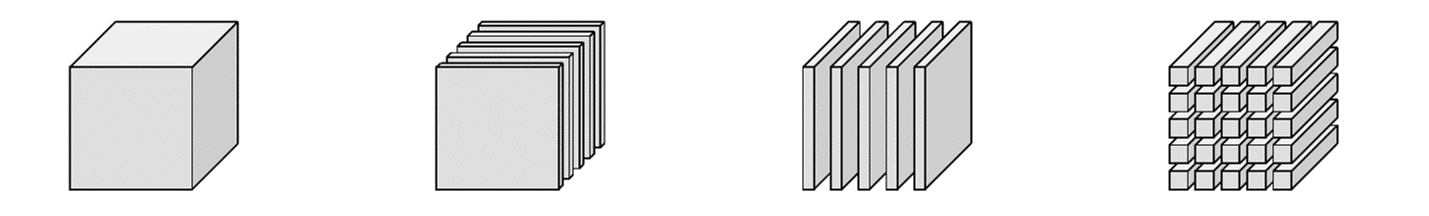}
\end{center}
\caption{Fibers and slices of a $m \times p \times n$ tensor $\TA$.  Left to right:  Frontal slices, referenced as either $\MA^{(i)}$ or $\TA_{:,:,i}$; lateral slices denoted as either $\vec{\TA}_j$ or $\TA_{:,j,:}$; tube fibers $\V{a}_{ij}$ or $\TA_{i,j,:}$ .} 
\end{figure*}

Some other notation that we use for convenience are the {\tt vec} and {\tt reshape} operators that map matrices to vectors by column unwrapping, and vice versa:
\[ {\bfa}  = \mbox{\tt vec}(\MA) \in \mathbb{C}^{mn}  \leftrightarrow \MA = \mbox{\tt reshape}({\bfa},[m,n]) .\]
We can also define invertible mappings between $m \times n$ matrices and $m \times 1 \times n$ tensors 
by twisting and squeezing\footnote{If $\TA$ is $m \times 1 \times n$, the {\sc Matlab} command $\mbox{\tt squeeze}(\TA)$ returns the $m \times n$ matrix.} (\cite{KilmerEtAl2013}): i.e. $\MX \in \mathbb{C}^{m \times n}$ is related to $\vec{\TX}$ via
\[  \vec{\TX} = \mbox{\tt twist}(\MX)  \mbox{ and }  \MX = \sq{\vec{\TX}}. \]

The mode-1, mode-2, and mode-3 unfoldings of $\TA \in \mathbb{C}^{m \times p \times n}$ are $m \times np, p \times mn$, and $n \times pn$ respectively, and are given by
\begin{equation} \begin{array}{lcl} \TA_{(1)} & := & [\MA^{(1)},\ldots, \MA^{(n)}] \\
                         \TA_{(2)} & := & [(\MA^{(1)})^\top,\ldots, (\MA^{(n)})^\top] \\ 
 \TA_{(3)} & := & [\sq{ \TA_{:,1,:}}^\top, \sq{ \TA_{:,2,:}}^\top, \ldots , \sq{\TA_{:,p,:}}^\top ]  \end{array} \end{equation}
These are useful in defining mode-wise tensor-matrix products (see \cite{KoldaBader}).  For example, $\TA \times_1 \M{F}$ for $\M{F} \in \mathbb{C}^{k \times m}$ is
equivalent to computing $\M{F} \TA_{(1)}$ and reshaping the result to a $k \times p \times n$ tensor.   
 $\TA \times_3 \MM$ for a $r \times n$  means we compute matrix-matrix product $\M{M} \TA_{(3)}$, 
which is $r \times mp$, and then reshape the result to an $m  \times p \times r$ tensor.  

The HOSVD \cite{DeLath2000} can be expressed as 
\begin{equation} \label{eq:hosvdfull} \TA = \TC \times_1 \MU \times_2 \MV \times_3 \MW, \end{equation}
where $\MU, \MV, \MW$ are the matrices containing the left singular vectors, corresponding to non-zero singular values, of the matrix SVDs of $\TA_{(1)}, \TA_{(2)}, \TA_{(3)}$, respectively. For an $m \times p \times n$ tensor, $\MU$ would be $m \times r_1$, $\MV$, $p \times r_2$, and $\MW$ $n \times r_3$, where $r_1, r_2, r_3$ are the ranks of the three respective unfoldings.  Correspondingly, we say the tensor has HOSVD rank $(r_1,r_2,r_3)$.  The $r_1 \times r_2 \times r_3$ core tensor is given by $\TC := \TA \times_1 \MU^* \times_2 \MV^* \times_3 \MW^*$.   While the columns of the factor matrices are orthonormal, the core need not be diagonal, and its entries need not be non-negative.  In practice, compression is done by truncating to an HOSVD rank $(k_1, k_2, k_3)$, but unlike the matrix case, such truncation does not lead to an optimal truncated approximation in a norm sense.   

If $\V{a}, \V{b}, \V{c}$ are length $m, n, p$ vectors, respectively, then $ \TB := \V{u} \circ \V{v} \circ \V{w}$ is called a {\bf rank-1} tensor, and $\TB_{i,j,k} = u_i v_j w_k$.  A CANDECOMP/PARAFAC (CP) \cite{hitchcock1927tensor,CarrollChang70,Harshman70} decomposition of a tensor $\TA$ is an expression as a sum of rank-1 outer-products:
\[ \TA  = \sum_{i=1}^r \V{u}^{(i)} \circ \V{v}^{(i)} \circ \V{w}^{(i)} := \llbracket \MU, \MV, \MW \rrbracket,\]
where the factor matrices $\MU, \MV, \MW$ have the $\V{u}^{(i)}, \V{v}^{(i)}, \V{w}^{(i)}$ as their columns, respectively.
{\it If $r$ is minimal, then $r$ is said to be the rank of the tensor; that is $r$ is the tensor rank}. Note that although it is known that $r \le \min(np,mp,nm)$, 
determining the rank of a tensor is an NP-hard problem \cite{HillarLim2013}.  Moreover, the factor matrices need not have orthonormal columns, nor even be full rank.   

However, if we are given a set of factor matrices with $k$ columns such that $\TA = \llbracket \MU, \MV, \MW \rrbracket$ holds, this {\it is still a CP decomposition, even if we do not know if $k$ corresponds to, or is bigger than, the rank}. In other words, given a CP decomposition where the factor matrices have $k$ columns, all we know is that the rank of the tensor represented by this decomposition has tensor rank at most $k$.  

In the remaining subsections of this section, we give background on the tensor-tensor product representation recently developed in the literature \cite{KilmerMartin2011,KilmerEtAl2013,LAA}.   The goal of this paper is to derive provably optimal (i.e. minimal, in the Frobenius norm) approximations to tensors under this framework.  We will compare these approximations to those derived by processing the same data in matrix form, and show links between our approximations and HOSVD and CP decompositions.

\subsection{A Family of Tensor-Tensor Products}
 The first closed multiplicative operation between a pair of third-order tensors of appropriate dimension was given in \cite{KilmerMartin2011}.  That operation was named the {\it t-product}, and the resulting linear algebraic framework is 
 described in \cite{KilmerMartin2011,KilmerEtAl2013}.   
In \cite{LAA}, the authors continued the theme from \cite{KilmerMartin2011} by describing new families of tensor-tensor products, and their associated algebraic framework.   As the presentation in \cite{LAA} includes the t-product as one example, we will introduce the class of tensor-tensor products of interest, and at times throughout the document, highlight the t-product as a special case. 

Let $\MM$ be any invertible, $n\times n$ matrix, and $\TA \in \mathbb{C}^{m \times p \times n}$.  We will use hat notation to denote the tensor in the transform domain specified by $\MM$:
\[ \widehat{\TA} : = \TA \times_3 \MM, \]
where, since $\MM$ is $n \times n$, $\widehat{\TA}$ has the same dimension as $\TA$.  
Importantly, {\it $\widehat{\TA}$ corresponds to applying $\MM$ along all tube fibers}, although it is implemented according to the definition of computing the matrix-matrix product $\MM \TA_{(3)}$, and reshaping the result.
We note that the ``hat" notation should be understood in context of the linear transformation applied, meaning that 
the transform-domain version of $\TA$ depends on the transform, $\MM$. 


\begin{algorithm} 
\caption{\label{alg:prod} Algorithm $\TA \starM \TB$ for invertible $\MM$ from \cite{LAA}}
 \begin{algorithmic}[1]
   \STATE Define $\widehat{\TA} := \TA \times_3 \MM$, $\widehat{\TB} := \TA \times_3 \MM$, \\
   \FORALL{$i=1,\ldots,n$}
  \STATE $\widehat{\TC}_{:,:,i} = \widehat{\TA}_{:,:,i} \widehat{\TB}_{:,:,i}$ \\
\ENDFOR
   \STATE Define $\TC = \widehat{\TC} \times_3 \MM^{-1}$.  Now $\TC = \TA \starM \TB$. 
\end{algorithmic}
\end{algorithm}

From \cite{LAA}, we define the $\starM$ product between $\TA \in \mathbb{C}^{m \times p \times n}$ and $\TB \in \mathbb{C}^{p \times r \times n}$ through the steps in \Cref{alg:prod}. 
Step 2 is 
embarrassingly parallelizable since the matrix-matrix products in the loop are independent.  Step 1 (and 3) could in theory also be performed in $p$ independent blocks of matrix-matrix products (or matrix solves, in the case of $\times_3 \MM^{-1}$ to avoid inverse computation).

Choosing $\MM$ as the (unnormalized) DFT matrix and comparing to the algorithm in the previous section, we see that $\TA \starM \TB$ effectively reduce to 
the t-product operation, $\TA * \TB$, defined in \cite{KilmerMartin2011}.    Thus, {\it the t-product is a special instance of products from the $\starM$ product family.}

\subsection{Tensor Algebraic Framework}
Now that we have many possible options for computing a tensor-tensor product, we can introduce the remaining parts of the framework.
For the t-product, the linear algebraic framework is in \cite{KilmerMartin2011,KilmerEtAl2013}.    However, as the t-product is a special case of the $\starM$ as noted above, we will elucidate here the linear algebraic framework as described in \cite{LAA}, with pointers to \cite{KilmerMartin2011, KilmerEtAl2013} so we can cover all cases.  \\

There exists an identity element and a notion of conjugate transposition:
\begin{definition}[Identity Tensor; Unit-normalized Slices]
	The $m \times m \times n$ identity tensor $\T{I}$ satisfies  $\TA \starM \TI = \TA = \TI \starM \TA$ for $\TA \in \mathbb{C}^{m \times m \times n}$.  For invertible $\MM$, this tensor always exists. If $\vec{\TB}$ is $m \times 1 \times n$, and
$\vec{\TB}^{\rm H} \starM \vec{\TB}$ is the $1 \times 1 \times n$ identity tensor under $\starM$, we say the $\vec{\TB}$ is a unit-normalized tensor slice.
\end{definition}

\begin{definition}[conjugate transpose] 
	Given $\TA \in \mathbb{C}^{m \times p \times n}$ its $p \times m \times n$ {\bf conjugate transpose} under $\starM$ $\TA^{\rm H}$ is defined such that
\[ (\widehat{\TA}^{\rm H})_{:,:,i} = (\widehat{\TA}_{:,:,i})^{\rm H}, \qquad i = 1,\ldots, n . \]
\end{definition}	
As noted in \cite{LAA}, this definition ensures the multiplication reversal property for the Hermitian transpose under $\starM$:
$ \TA^{\rm H} \starM \TB^{\rm H} = (\TB \starM \TA)^{\rm H}. $   This definition is consistent with the t-product transpose given in \cite{KilmerMartin2011} when $\starM$ is defined by the DFT matrix.

With conjugate transpose and an identity operators defined, the concept of unitary and orthogonal tensors is now straightforward:
\begin{definition}[unitary/orthogonality]
	Two, $m \times 1 \times n$ tensors, $\vec{\TA}, \vec{\TB}$ are called $\starM$-orthogonal slices if $\vec{\TA}^{\rm H} \starM \vec{\TB}$ is the tube fiber $\bf 0$.  If $\T{Q} \in \mathbb{C}^{m \times m \times n}$ ($\TQ \in \mathbb{R}^{m \times m \times n}$) is called $\starM$-unitary\footnote{The reader should regard of the elements of the tensor as $1 \times 1 \times n$ tube fibers under $\starM$.  This forms a free module (see \cite{LAA}).  The analogy to elemental inner-product like definitions over the underling free module induced by $\starM$ and space of tube-fibers is referenced in Section 4 of \cite{LAA} for general $\starM$ products, and in Section 3 of \cite{KilmerEtAl2013} for the t-product.   The notion of orthogonal/unitary tensors
is therefore consistent with this generalization of inner-products, which is captured in the first part of the definition.},  ($\starM$-orthogonal) if
	\begin{equation*}
		\T{Q}^{\rm H} \starM \T{Q} = \T{I} = \T{Q} \starM \T{Q}^{\rm H},
	\end{equation*}
	where $H$ is replaced by transpose for real-valued tensors.  Note that $\T{I}$ must be the one defined under $\starM$ as well, and that tube fibers on the diagonal correspond to unit-normalized slices $\vec{\TQ}_j^{\rm H} \starM \vec{\TQ}_j$ and off-diagonals
to tube fibers formed from products $\vec{\TQ}_j^{\rm H} \starM \vec{\TQ}_k$ with $k \not= j$.      
\end{definition}


\section{Tensor $\starM$ SVDs and Optimal Truncated Representation}\label{sub:t-svd}

As noted in the previous section and described in more detail in \cite{LAA}, any invertible matrix $\MM$ can be used to define a valid tensor-tensor product.  However, in this paper, we will focus on a specific class of matrices $\MM$ for which
unitary-invariance under the Frobenius norm is preserved, as we  discuss in the following.  We then use this feature to develop Eckart-Young theory for our tensor decompositions later in this section.

\subsection{Unitary Invariance}
Unitary invariance of real-valued orthogonal tensors under the t-product was shown in \cite{KilmerMartin2011}.  Here, we prove a
more general result. 
\begin{theorem} \label{th:invariance} With the choice of $n \times n$ $\MM = c \M{W}$ for unitary (orthogonal) $\M{W}$, and non-zero $c$,  assume $\TQ$ is $m \times m \times n$ and $\starM$-unitary ($\starM$-orthogonal).   Then 
$$\| \TQ \starM \TB \|_F = \| \TB \|_F,  \TB \in \mathbb{C}^{m \times k \times n}. $$  Likewise, if 
$\TB \in \mathbb{C}^{p \times m \times n}$, $\| \TB \starM \TQ \|_F = \| \TB \|_F$. \end{theorem}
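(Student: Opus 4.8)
The plan is to reduce the Frobenius-norm identity to the transform domain, where the $\starM$ product becomes a collection of ordinary matrix products on frontal slices, and then exploit the fact that each frontal slice of $\widehat{\TQ}$ is unitary together with the near-isometry of $\MM = c\M{W}$ on tube fibers. First I would establish the key scaling lemma: for any $\T{X} \in \mathbb{C}^{m \times k \times n}$, since $\widehat{\TX} = \TX \times_3 \MM$ means we apply $\MM = c\M{W}$ along each tube fiber, and $\M{W}$ is unitary, we have $\|\widehat{\TX}\|_F^2 = |c|^2 \|\TX\|_F^2$. This follows because $\|\TX\|_F^2 = \sum_{i,j} \|\TX_{i,j,:}\|_2^2$ and each tube fiber $\TX_{i,j,:}$ gets mapped to $\MM \TX_{i,j,:} = c\M{W}\TX_{i,j,:}$, whose 2-norm is exactly $|c|$ times the original. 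Summing over all $mk$ fibers gives the claim.

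Next I would unwind the definition of $\starM$. By \Cref{alg:prod}, $\T{C} := \TQ \starM \TB$ satisfies $\widehat{\TC}_{:,:,i} = \widehat{\TQ}_{:,:,i} \widehat{\TB}_{:,:,i}$ for each $i = 1,\ldots,n$. Now the hypothesis that $\TQ$ is $\starM$-unitary, combined with the definition of conjugate transpose under $\starM$ (namely $(\widehat{\TQ}^{\rm H})_{:,:,i} = (\widehat{\TQ}_{:,:,i})^{\rm H}$) and the fact that $\widehat{\cdot}$ commutes with $\starM$ in the sense that $\widehat{\TQ^{\rm H} \starM \TQ}_{:,:,i} = (\widehat{\TQ}_{:,:,i})^{\rm H}\widehat{\TQ}_{:,:,i}$, forces each frontal slice $\widehat{\TQ}_{:,:,i}$ to be an $m \times m$ unitary matrix (since $\widehat{\TI}_{:,:,i}$ is the $m\times m$ identity for each $i$; this uses that the identity tensor transforms slicewise to identities, which I would note is established in \cite{LAA}). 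Therefore $\|\widehat{\TC}_{:,:,i}\|_F = \|\widehat{\TQ}_{:,:,i}\widehat{\TB}_{:,:,i}\|_F = \|\widehat{\TB}_{:,:,i}\|_F$ by ordinary matrix unitary invariance of the Frobenius norm, and summing the squares over $i$ gives $\|\widehat{\TC}\|_F = \|\widehat{\TB}\|_F$.

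Finally I would assemble the pieces: applying the scaling lemma twice,
\begin{equation*}
\|\TQ \starM \TB\|_F^2 = \|\T{C}\|_F^2 = |c|^{-2}\|\widehat{\TC}\|_F^2 = |c|^{-2}\|\widehat{\TB}\|_F^2 = \|\TB\|_F^2,
\end{equation*}
which is the desired identity. The right-multiplication case $\|\TB \starM \TQ\|_F = \|\TB\|_F$ follows by the same argument applied slicewise on the right (each $\widehat{\TB}_{:,:,i}\widehat{\TQ}_{:,:,i}$ has the same Frobenius norm as $\widehat{\TB}_{:,:,i}$ since $\widehat{\TQ}_{:,:,i}$ is unitary), or alternatively by invoking the multiplication-reversal property $\TB \starM \TQ = ((\TQ^{\rm H}) \starM (\TB^{\rm H}))^{\rm H}$ together with the fact that conjugate transposition preserves the Frobenius norm and that $\TQ^{\rm H}$ is again $\starM$-unitary.

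I expect the main obstacle to be the bookkeeping around the identity tensor: one must be careful that $\widehat{\TI}$ really does have the $m \times m$ identity matrix as each of its frontal slices, so that $\TQ^{\rm H}\starM\TQ = \TI$ genuinely implies slicewise unitarity rather than something weaker. This is where the restriction $\MM = c\M{W}$ with $\M{W}$ unitary is doing real work — for a general invertible $\MM$ the slicewise factors would only be invertible, not unitary, and the Frobenius norm would not be preserved. Everything else is routine once the transform-domain picture and the $|c|^2$ scaling are in hand.
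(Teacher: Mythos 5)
Your proof is correct and follows essentially the same route as the paper's: reduce to the transform domain via the scaling identity $\|\widehat{\TX}\|_F = |c|\,\|\TX\|_F$, use that each frontal slice $\widehat{\TQ}_{:,:,i}$ is unitary to invoke matrix Frobenius-norm invariance slicewise, and apply the scaling twice. Your write-up is in fact slightly more careful than the paper's (spelling out why $\starM$-unitarity forces slicewise unitarity, and writing $|c|$ rather than $c$), but these are refinements of the same argument, not a different approach.
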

\begin{proof} 

Suppose $\MM = c \M{W}$ where $\M{W}$ is unitary.  Then $\MM^{-1} = \frac{1}{c} \M{W}^{\rm H}$.  
Next, 
\begin{equation} \label{eq:norm} \| \widehat{\TB} \|_F = \| \TB \starM \MM \|_F = \| c \M{W} \TB_{(3)} \|_F = c \| \TB_{(3)} \|_F = c \| \TB \|_F. \end{equation}

Let $\TC = \TQ \starM \TB$.   Using (\ref{eq:norm}),
\[
  \| \TB \|_F^2 = \frac{1}{|c|^2} \| \widehat{\TB} \|_F^2 = \frac{1}{|c|^2} \sum_{i=1}^p \| \widehat{\TQ}_{:,:,i} \widehat{\TB}_{:,:,i} \|_F^2 = \frac{1}{|c|^2} \| \widehat{\TC} \|_F^2 = \| \TC \|_F^2 = \| \TQ \starM \TB \|_F^2 ,\]
since each $\widehat{\TQ}_{:,:,i}$ must be unitary.   The other direction follows similarly.   
\end{proof}

We now have the framework we need to describe tensor SVDs induced by a fixed, $\star_M$ operator.   These were defined and existence was proven in \cite{KilmerMartin2011} for the t-product over real tensors, and in \cite{LAA} for $\starM$ more generally. 
\begin{definition}[\cite{KilmerMartin2011,LAA}]\label{def:tsvd}
	Let $\TA$ be a $m \times p \times n$ tensor. The (full) $\starM$ tensor SVD (t-SVDM) of $\TA$ is
\begin{equation} \label{eq:TSVDMr}
\TA   = \TU \starM \TS \starM \TV^{\rm H}
          =  \sum_{i=1}^{r} \TU_{:,i,:} \starM \TS_{i,i,:} \starM \TV_{:,i,:}^{\rm H} 
                  \end{equation}
    where $\TU \in \mathbb{R}^{m \times m \times n}$, $\TV \in \mathbb{R}^{p \times p \times n}$ are $\starM-unitary$, and $\TS \in \mathbb{R}^{m \times p \times n}$ is a tensor whose frontal slices  are diagonal (such a tensor is called {\bf f-diagonal}), and $r \le \min(m,p)$ is the number of non-zero tubes in $\TS$.   When $M$ is the DFT matrix, this reduces to the t-product-based t-SVD introduced in \cite{KilmerMartin2011}.
\end{definition}

Clearly, if $m > p$, from the second equality we can get a {\it reduced} t-SVDM, by restricting $\TU$ to have only $p$ orthonormal lateral slices, and $\T{S}$ to be $p \times p \times n$, as opposed to the full representation.  Similarly, if $p > m$, we only need to keep the $m \times m \times n$ portion of $\TS$ and the $m$ columns of $\TV$ to obtain the same representation.  
An illustration of the decomposition is presented in the top part of \Cref{fig:tsvdII}.

\begin{algorithm}[h] 
\caption{ \label{alg:one} Full t-SVDM from \cite{LAA}.}
\begin{algorithmic}[1]
 \STATE  $\widehat{\TA} \leftarrow \TA \times_M \MM$ \\
 \FORALL{$i=1,\ldots,n$}  
 \STATE $[\widehat{\TU}_{:,:,i},\widehat{\TS}_{:,:,i}, \widehat{\TV}_{:,:,i}] = \mbox{\tt svd} ([\widehat{\TA}_{:,:,i}]) $  \% note: rank $\widehat{\TA}_{:,:,i}$ is $\rho_i$. \\
\ENDFOR
\STATE  $\TU = \widehat{\TU} \times_3 \MM^{-1}, \,\, \TS = \widehat{\TS} \times_3 \MM^{-1}, \,\, \TV = \widehat{\TV} \times_3 \MM^{-1}$. \\
\end{algorithmic}
\end{algorithm}

Independent of the choice of $\MM$, the components of the t-SVDM are computed in transform space.  We describe the full t-SVDM in \Cref{alg:one}.
As noted, the t-SVDM above was proposed already in \cite{LAA}.  However, when we restrict the class of $\MM$ to non-zero multiples of unitary or orthogonal matrices, we can now 
derive an Eckart-Young Theorem for tensors in general form. 
To do so, we first give a new corollary for the
restricted class of $\MM$ considered. 
\begin{corollary}
Assume $\MM = c \M{W}$, where $c \not= 0$, and $\M{W}$ is unitary.  Then given the t-SVDM of $\TA$ over $\starM$ defined in \Cref{def:tsvd},  
\[ \| \TA \|_F^2  = \| \TS \|_F ^2 = \sum_{i=1}^{\min(p,m)} \| \TS_{i,i,:} \|_F^2. \]  
Moreover, $\| \TS_{1,1,:} \|_F^2 \ge \| \TS_{2,2,:} \|_F^2 \ge \ldots $
\end{corollary}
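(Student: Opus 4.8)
The plan is to reduce the statement to the corresponding fact for ordinary matrix SVDs applied slice-by-slice in the transform domain, exploiting \Cref{th:invariance} and the orthogonal invariance it provides. First I would write $\widehat{\TA} = \TA \times_3 \MM$ and recall that $\TU = \widehat{\TU}\times_3\MM^{-1}$, $\TS = \widehat{\TS}\times_3\MM^{-1}$, $\TV = \widehat{\TV}\times_3\MM^{-1}$, where for each $i$ the triple $(\widehat{\TU}_{:,:,i},\widehat{\TS}_{:,:,i},\widehat{\TV}_{:,:,i})$ is an ordinary SVD of the frontal slice $\widehat{\TA}_{:,:,i}$. Since $\MM = c\M{W}$ with $\M{W}$ unitary, $\TU$ and $\TV$ are $\starM$-unitary, and the identity $\TA = \TU \starM \TS \starM \TV^{\rm H}$ holds. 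Applying \Cref{th:invariance} twice (once on the left with $\TU^{\rm H}$, once on the right with $\TV$) gives $\|\TA\|_F = \|\TU^{\rm H}\starM\TA\starM\TV\|_F = \|\TS\|_F$. This establishes the first equality; the second, $\|\TS\|_F^2 = \sum_{i=1}^{\min(p,m)}\|\TS_{i,i,:}\|_F^2$, is just the definition of the Frobenius norm together with the fact that $\TS$ is f-diagonal, so the only nonzero entries of $\TS$ lie in tubes $\TS_{i,i,:}$ for $i \le \min(p,m)$.

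For the monotonicity claim, the key observation is that $\|\TS_{j,j,:}\|_F^2$ can be rewritten in the transform domain. Using \eqref{eq:norm}, $\|\TS_{j,j,:}\|_F^2 = \tfrac{1}{|c|^2}\|\widehat{\TS}_{j,j,:}\|_F^2 = \tfrac{1}{|c|^2}\sum_{i=1}^n |(\widehat{\TS})_{j,j,i}|^2 = \tfrac{1}{|c|^2}\sum_{i=1}^n \sigma_j(\widehat{\TA}_{:,:,i})^2$, where $\sigma_j(\widehat{\TA}_{:,:,i})$ denotes the $j$-th largest singular value of the $i$-th transform-domain frontal slice (taken to be zero if $j$ exceeds the slice's rank). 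Since for each fixed $i$ the singular values satisfy $\sigma_1(\widehat{\TA}_{:,:,i}) \ge \sigma_2(\widehat{\TA}_{:,:,i}) \ge \cdots \ge 0$, summing these inequalities over $i$ gives $\|\TS_{1,1,:}\|_F^2 \ge \|\TS_{2,2,:}\|_F^2 \ge \cdots$, which is exactly the assertion. One should note here that this requires the convention, built into \Cref{alg:one}, that within each slice the SVD orders singular values decreasingly; I would state this convention explicitly.

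The only genuinely delicate point — and the step I expect to need the most care — is the passage through \eqref{eq:norm} relating a Frobenius norm in the transform domain to one in the original domain when applied to the \emph{tube} $\TS_{j,j,:}$ rather than to a whole tensor. The cleanest way is to regard $\TS_{j,j,:}$ as a $1\times 1\times n$ tensor and apply \eqref{eq:norm} directly; alternatively, since $\times_3\MM$ acts independently on each tube fiber, $\widehat{\TS}_{j,j,:}$ is literally $\MM$ applied to the tube $\TS_{j,j,:}$, so $\|\widehat{\TS}_{j,j,:}\|_2 = \|\MM\, \TS_{j,j,:}\|_2 = |c|\,\|\M{W}\,\TS_{j,j,:}\|_2 = |c|\,\|\TS_{j,j,:}\|_2$ by unitarity of $\M{W}$. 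Either route works; I would use the second, since it is self-contained and makes clear that the factor $|c|$ cancels uniformly across all $j$, so it plays no role in the ordering. Everything else is bookkeeping.
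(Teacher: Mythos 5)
Your proposal is correct and follows essentially the same route as the paper: the first equality from \Cref{th:invariance}, the second from f-diagonality, and the ordering by writing each $\|\TS_{j,j,:}\|_F^2$ in the transform domain via \eqref{eq:norm} as a sum over faces of the squared $j$-th singular values and summing the per-face inequalities. Your handling of the constant (using $\tfrac{1}{|c|^2}$ uniformly, which cancels in the ordering) is in fact slightly more careful than the paper's, which writes the factor as $\tfrac{1}{c}$.
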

\begin{proof}  The proof of the first equality follows directly from \Cref{th:invariance}, the second from the definition of Frobenius norm.   To prove the ordering property, use the short hand for each singular tube fiber as $ \V{s}_i := \TS_{i,i,:}$, and note using (\ref{eq:norm}) that 
\[  \| \V{s}_i \|_F^2 = \frac{1}{c} \| \hat{\V{s}}_i \|_F^2 = \frac{1}{c} \sum_{j=1}^n (\hat{\sigma}_i^{(j)})^2, \]
where we have used $\hat{\sigma}_i^{(j)}$ to denote the $i^{th}$ largest singular value of the $j^{th}$ frontal face of $\widehat{\TS}$.  
However since $\hat{\sigma}_i^{(j)} \ge \hat{\sigma}_{i+1}^{(j)}$, the result follows. \end{proof} 

This decomposition and observation gives rise to a new definition
\begin{definition}
We refer to $r$ in the t-SVDM \Cref{def:tsvd} (see the second equality in \Cref{eq:TSVDMr}) as the {\bf t-rank}\footnote{The term t-rank is exclusive to the $\starM$ tensor decomposition and should not be confused with the rank of a tensor which was defined in the introduction. }, the number of non-zero singular tubes in the t-SVDM.  
\end{definition}
We can also extend the idea of multi-rank in \cite{KilmerEtAl2013} to the $\starM$ general case: 
\begin{definition}
The {\bf multi-rank} of $\TA$ under $\starM$ is the {\it vector} $\boldsymbol \rho$ such that its $i^{th}$ entry $\rho_i$ denotes the rank of the $i^{th}$ frontal slice of $\widehat{\TA}$ (See comment in \Cref{alg:one}). 
\end{definition} 
Notice that a tensor with multi-rank $\bfrho$ must have t-rank equal to $\max_{i=1,\ldots,n} \rho_i$.  
\begin{definition} The {\bf implicit rank} under $\starM$ of $\TA_{\bfrho}$ is $r = \sum_{i=1}^{n} \rho_i$. \end{definition}

\subsection{Eckart-Young Theorem for Tensors} 

The key aspect that has made the t-product-based t-SVD so instrumental in many applications (see, for example \cite{HaoEtAl2013,zhang2014novel,Zhang2017MultiviewSC,SAGHEER2019}) is the tensor
Eckart-Young theorem proven in \cite{KilmerMartin2011} for real valued tensors under the t-product.   
In loose terms, {\it truncating the t-product based t-SVDM gives an optimal low t-rank approximation in the Frobenius norm}. 
An Eckart-Young theorem for the $\starM$ operator was not provided in \cite{LAA}.  We give a proof below for the special case that we have been considering in which $\MM$ is a multiple of a unitary matrix.   
\begin{theorem} 
 Define $\TA_k = \TU_{:,1:k,:} \starM \TS_{1:k,1:k,:} \starM \TV_{:,1:k,:}^{\rm H}$, where $\MM$ is a non-zero multiple of a unitary matrix.   Then $ \TA_k$ is the best Frobenius norm approximation\footnote{By ``best" we mean minimizer of the Frobenius norm of the discrepancy between the original and the approximated tensor.} over the set $\Gamma = \{ \TC = \TX \starM \TY | \TX \in \mathbb{C}^{m \times k \times n}, \TY \in \mathbb{C}^{k \times p \times n} \}, $ the set of all t-rank $k$ tensors under $\starM$, of the same dimensions as $\TA$.  The squared error is 
$\| \TA - \TA_k \|_F^2 = \sum_{i=k+1}^{r} \| \V{s}_i \|_F^2 $, where $r$ is the t-rank of $\TA$.    
\end{theorem}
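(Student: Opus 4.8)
The plan is to reduce the tensor approximation problem, via the transform $\MM = c\M{W}$, to a collection of independent matrix approximation problems in the transform domain, and then invoke the classical matrix Eckart--Young theorem slicewise. First I would use \Cref{th:invariance} and the scaling identity \eqref{eq:norm} to show that for any $\TC \in \Gamma$ we have $\|\TA - \TC\|_F^2 = \frac{1}{|c|^2}\|\widehat{\TA} - \widehat{\TC}\|_F^2 = \frac{1}{|c|^2}\sum_{i=1}^n \|\widehat{\TA}_{:,:,i} - \widehat{\TC}_{:,:,i}\|_F^2$, so minimizing over $\Gamma$ decouples into $n$ separate problems over the frontal slices.

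Next I would characterize the feasible set slicewise: if $\TC = \TX \starM \TY$ with $\TX \in \mathbb{C}^{m\times k\times n}$ and $\TY \in \mathbb{C}^{k\times p\times n}$, then by the definition of $\starM$ (\Cref{alg:prod}) each $\widehat{\TC}_{:,:,i} = \widehat{\TX}_{:,:,i}\widehat{\TY}_{:,:,i}$ is a product of an $m\times k$ matrix with a $k\times p$ matrix, hence has rank at most $k$; conversely any choice of rank-$\le k$ frontal slices $\M{C}^{(i)}$ can be realized this way by factoring each slice and applying $\times_3\MM^{-1}$ to assemble $\TX,\TY$. So the constraint ``$\TC\in\Gamma$'' is equivalent to ``each frontal slice of $\widehat{\TC}$ has rank at most $k$,'' with no coupling across slices. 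Therefore the minimum of $\sum_i\|\widehat{\TA}_{:,:,i} - \widehat{\TC}_{:,:,i}\|_F^2$ is attained by independently choosing, for each $i$, the best rank-$k$ Frobenius approximation of $\widehat{\TA}_{:,:,i}$, which by the matrix Eckart--Young theorem is its truncated SVD $\widehat{\TU}_{:,1:k,i}\widehat{\TS}_{1:k,1:k,i}\widehat{\TV}_{:,1:k,i}^{\rm H}$; the residual for slice $i$ is $\sum_{j>k}(\hat\sigma_j^{(i)})^2$.

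Then I would identify the assembled optimal $\TC$ with $\TA_k$: the slicewise truncated SVDs are exactly the frontal slices of $\widehat{\TU}_{:,1:k,:}\widehat{\TS}_{1:k,1:k,:}\widehat{\TV}_{:,1:k,:}^{\rm H}$ (since $\starM$ multiplication in the transform domain is slicewise matrix multiplication), and applying $\times_3\MM^{-1}$ gives precisely $\TU_{:,1:k,:}\starM\TS_{1:k,1:k,:}\starM\TV_{:,1:k,:}^{\rm H} = \TA_k$ by \Cref{alg:one}. Finally I would compute the squared error by summing the slicewise residuals and converting back using the relation $\|\V{s}_i\|_F^2 = \frac{1}{|c|^2}\sum_{j=1}^n(\hat\sigma_i^{(j)})^2$ from the preceding corollary, yielding $\|\TA - \TA_k\|_F^2 = \sum_{i=k+1}^r\|\V{s}_i\|_F^2$ (terms with $i>r$ vanish since those tubes are zero).

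The main obstacle I anticipate is the converse direction of the feasibility characterization — rigorously arguing that \emph{every} tensor whose transform-domain frontal slices have rank $\le k$ actually lies in $\Gamma$, i.e. can be written globally as $\TX\starM\TY$ with the stated dimensions. This requires checking that independent slicewise factorizations $\widehat{\M{C}}^{(i)} = \M{X}^{(i)}\M{Y}^{(i)}$ can be bundled into tensors $\widehat{\TX},\widehat{\TY}$ of consistent third dimension $n$ and pulled back through $\times_3\MM^{-1}$; this is routine but is the step where the ``$\starM$ is slicewise in transform space'' structure does all the work, and it is essential that $k$ is the same across all slices so the bundled factors have well-defined dimensions. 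A minor secondary point is being careful that $|c|^2$ (not $c$) appears, matching \eqref{eq:norm} for complex $c$, and that the sum over $i$ from $k+1$ can be extended from $\min(m,p)$ down to $r$ harmlessly since $\|\V{s}_i\|_F = 0$ for $i>r$.
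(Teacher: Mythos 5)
Your proposal is correct and takes essentially the same route as the paper's proof: pass to the transform domain defined by the scaled-unitary $\MM$, use the norm identity to decouple the objective over the $n$ frontal slices, observe that every member of $\Gamma$ has transform-domain slices of rank at most $k$, and invoke the matrix Eckart--Young theorem slicewise, with the slicewise truncated SVDs reassembling into $\TA_k$. The converse feasibility direction you flag as the main obstacle is not actually needed---it suffices that each $\TC\in\Gamma$ has slicewise rank $\le k$ and that $\TA_k$ itself lies in $\Gamma$ and attains the slicewise lower bound---and your attention to $|c|^2$ rather than $c$ is, if anything, more careful than the paper's own write-up.
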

\begin{proof} The squared error result follows easily from the results in the previous section.  Now let $\TB = \TX \starM \TY$.  $\| \TA - \TB \|_F^2 = \frac{1}{c} \| \widehat{\TA}  - \widehat{\TB} \|_F^2 = \frac{1}{c} \sum_{i=1}^n \| \widehat{\TA}_{:,:,i} - \widehat{\TB}_{:,:,i} \|_F^2. $   By definition, $\widehat{\TB}_{:,:,i}$ is a rank-k outer product 
$ \widehat{\TX}_{:,:,i} \widehat{\TY}_{:,:,i}$.   The best rank-k approximation to $\widehat{\TB}_{:,:,i}$ is $\widehat{\TU}_{:,1:k,i} \widehat{\TS}_{1:k,1:k,i} \widehat{\TV}_{:,1:k,i}^{\rm H}$, so 
$ \| \widehat{\TA}_{:,:,i} - \widehat{\TU}_{:,1:k,i} \widehat{\TS}_{1:k,1:k,i} \widehat{\TV}_{:,1:k,i}^{\rm H} \|_F^2 \le 
\| \widehat{\TA}_{:,:,i}  - \widehat{\TB}_{:,:,i} \|_F^2$, and the result follows.  
  \end{proof}
%
%

In \cite{HaoEtAl2013}, the authors used the Eckart-Young result for the t-product for compression of facial data and a PCA-like approach to recognition.   They also gained additional compression in an algorithm they called the t-SVDII (only for the t-product on real tensors), which, although not described as such in that paper, is effectively reducing the multi-rank for further compression.  
Here, we provide the theoretical justification for the t-SVDII approach in \cite{HaoEtAl2013} while simultaneously extending the result to the $\starM$ product family restricted to $\MM$ being a non-zero multiple of a unitary matrix.   
\begin{theorem} \label{thm:tsvdMII}
Given the t-SVDM of $\TA$ under $\starM$, define $\TA_{\bfrho}$, to be the approximation having multi-rank $\bfrho$: that is, 
$$ (\widehat{\TA}_{\bfrho})_{:,:,i} = \widehat{\TU}_{:,1:\rho_i,i} \widehat{\TS}_{1:\rho_i,1:\rho_i,i} \widehat{\TV}_{:,1:\rho_i,i}^{\rm H}. $$
Then $\TA_{\bfrho}$ is the best multi-rank $\bfrho$ approximation to $\TA$ in the Frobenius norm and 
\[ \| \TA - \TA_{\bfrho} \|_F^2 = \sum_{i=1}^n \sum_{k=1}^{r_i} (\hat{\sigma}_{\rho_i+k}^{(i)})^2, \]
where $r_i$ denotes the rank of the $i^{th}$ frontal face of $\widehat{\TA}$. 
\end{theorem}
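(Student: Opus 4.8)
The plan is to push the entire problem into the transform domain specified by $\MM$, where the $\starM$ product acts slicewise as ordinary matrix multiplication and the Frobenius norm decouples across frontal slices, and then to invoke the classical matrix Eckart--Young theorem once for each of the $n$ frontal slices independently.

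First I would record the scaling identity from \Cref{th:invariance} (its displayed equation \eqref{eq:norm}): since $\MM = c\M{W}$ with $\M{W}$ unitary, any tensor $\T{E}$ of the relevant size satisfies $\|\T{E}\|_F^2 = \tfrac{1}{|c|^2}\|\widehat{\T{E}}\|_F^2 = \tfrac{1}{|c|^2}\sum_{i=1}^n \|\widehat{\T{E}}_{:,:,i}\|_F^2$. Applying this with $\T{E} = \TA - \TB$ for an arbitrary competitor $\TB$ gives
\[ \|\TA - \TB\|_F^2 = \frac{1}{|c|^2}\sum_{i=1}^n \|\widehat{\TA}_{:,:,i} - \widehat{\TB}_{:,:,i}\|_F^2. \]
The crucial observation is that, by the very definition of multi-rank, the feasibility constraint ``$\TB$ has multi-rank $\bfrho$'' is exactly the requirement $\mathrm{rank}(\widehat{\TB}_{:,:,i}) \le \rho_i$ for each $i$; and because $\times_3\MM$ is a bijection ($\MM$ being invertible), as $\TB$ ranges over all such tensors the tuple $(\widehat{\TB}_{:,:,1},\dots,\widehat{\TB}_{:,:,n})$ ranges over all tuples of matrices of those prescribed maximal ranks, with no coupling between distinct indices $i$. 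Hence minimizing the right-hand side over the feasible set separates into $n$ independent matrix low-rank approximation problems.

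Next I would dispatch each of those $n$ subproblems with the matrix Eckart--Young theorem: for fixed $i$, the best rank-at-most-$\rho_i$ approximation of $\widehat{\TA}_{:,:,i}$ in Frobenius norm is the truncated SVD $\widehat{\TU}_{:,1:\rho_i,i}\,\widehat{\TS}_{1:\rho_i,1:\rho_i,i}\,\widehat{\TV}_{:,1:\rho_i,i}^{\rm H}$, with squared residual $\sum_{k=\rho_i+1}^{r_i}(\hat{\sigma}_k^{(i)})^2 = \sum_{k=1}^{r_i}(\hat{\sigma}_{\rho_i+k}^{(i)})^2$, the extra terms vanishing because $\hat{\sigma}_k^{(i)}=0$ for $k>r_i$. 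This slicewise minimizer is precisely $(\widehat{\TA}_{\bfrho})_{:,:,i}$ from the statement, so the tensor $\widehat{\TA}_{\bfrho}$ minimizes $\sum_i\|\widehat{\TA}_{:,:,i}-\widehat{\TB}_{:,:,i}\|_F^2$ over the feasible set; mapping back via $\times_3\MM^{-1}$ (equivalently, recognizing $\widehat{\TA}_{\bfrho}$ as the transform-domain representation of $\TA_{\bfrho}$) shows $\TA_{\bfrho}$ is optimal and yields the error formula $\|\TA - \TA_{\bfrho}\|_F^2 = \sum_{i=1}^n\sum_{k=1}^{r_i}(\hat{\sigma}_{\rho_i+k}^{(i)})^2$ (up to the global scalar $1/|c|^2$ coming from $\MM = c\M{W}$, which is exactly $1$ in the normalization used for the displayed identity).

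I do not expect a serious obstacle; the argument is a clean ``transform, decouple, apply the matrix theorem, transform back,'' mirroring the proof of the preceding t-rank Eckart--Young theorem. The one place requiring care is the decoupling step: one must verify that under $\widehat{\,\cdot\,}$ the feasible set genuinely factors as a Cartesian product of matrix-rank constraints with no cross-slice interaction lost, which is immediate from invertibility of $\MM$ together with the definition of multi-rank. A secondary, purely notational subtlety is whether ``multi-rank $\bfrho$'' is read as exactly $\bfrho$ or as at most $\bfrho$ entrywise; taking the ``at most'' reading (or, equivalently, assuming $\rho_i \le r_i$) makes each subproblem a bona fide Eckart--Young problem with its minimizer attained at the stated truncated SVD, and the error formula is correct under either reading since the omitted terms are zero whenever $\rho_i \ge r_i$.
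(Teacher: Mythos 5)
Your argument is correct and is essentially the proof the paper intends: the paper omits the details, stating the result ``follows similarly'' to the t-rank Eckart--Young theorem, whose proof is exactly your transform--decouple--apply-matrix-Eckart--Young--transform-back scheme. Your explicit treatment of the decoupling of the multi-rank constraint across frontal slices and of the scaling constant from $\MM = c\M{W}$ fills in precisely the details the paper leaves to the reader.
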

   \begin{proof}  Follows similarly to the above, and is omitted. \end{proof}

To use this in practice, we generalize the idea of the t-SVDII in \cite{HaoEtAl2013} to the $\starM$ product when $\MM$ is a multiple of a unitary matrix.  First, we need a suitable method to choose $\bfrho$.  We know
 \[ \| \widehat{\TA} \|_F^2 = \sum_{i=1}^{n} \sum_{j=1}^{r_i} (\hat{\sigma}^{(i)}_j)^2 ,\]
where $r_i$ is the rank of the $i^{th}$ frontal face of $\widehat{\TA}$.  Thus, there are $K:=\sum_{i=1}^{n} r_i \le  n\min(m,p)$ total non-zero singular values.  
Let us order the $(\hat{\sigma}^{(i)}_j)^2$ values in descending order, put them into a vector of length $K$.  We find the  first index $ J \le K$ such that $ (\sum_{i=1}^J v_i)/ \| \widehat{\TA} \|_F^2  > \gamma$.  Keeping $J$ total terms thus implies an approximation of energy $\gamma$.  Then let $\tau = \sqrt{ v_J}$ -- this will be the value of the singular value that is the smallest one which we should include in the approximation.   We run back through the $n$ faces, and for face $i$, we keep only the $\rho_i$ singular 3-tuples
such that $\hat{\sigma}_{j}^{(i)} \ge \tau$.  In other words, the relative error 
in our approximation is given by
 \[ \frac{ \sum_{i=1}^n \sum_{j=1}^{\rho_i} (\hat{\sigma}_{j}^{(i)})^2 }{\| \widehat{\TA} \|_F^2} \approx \gamma \]
The pseudocode is given in \Cref{alg:tsvdMII}, and a cartoon illustration of the output is given in \Cref{fig:tsvdII}.
  
\begin{algorithm} 
\caption{\label{alg:tsvdMII} Return t-SVDMII under $\starM$, $\bfrho$ to meet energy constraint.}
\begin{algorithmic}[1]
\STATE INPUT:  $\TA$, $\MM$ a multiple of unitary matrix; desired energy $\gamma \in (0,1]$. \\
\STATE Compute t-SVDM of $\TA$.   \\
\STATE Concatenate $((\widehat{\TS}_{j,j,i}).^2)$ for all $i,j$ into a vector $\V{v}$. \\
\STATE $\V{v} \leftarrow \mbox{sort}(\V{v},'\mbox{descend}')$.  \\
\STATE Let $\V{w}$ be the vector of cumulative sums: i.e. $\V{w}_k = \sum_{i=1}^k \V{v}_i$ \\ 
\STATE Find the first index $J$ such that $\V{w}_J/ \| \hat{\TS} \|_F^2 > \gamma$.  \\
\STATE Define $\tau := \V{v}_J$.  
\FORALL{$i = 1,\ldots, n$}
\STATE Set $\rho_i$ as number of singular values for $\widehat{\TA}_{:,:,i}$ greater or equal to $\tau$. \\
\STATE Keep only the $m \times \rho_i$ $\widehat{\TU}_{:,1:\rho_i,i}$ and $\widehat{\TG}_\bfrho :=\widehat{\TS}_{1:\rho_i,1:\rho_i,i} \widehat{\TV}_{:,1:\rho_i,i}^{\rm H}$.\\ 
\ENDFOR
\end{algorithmic}
\end{algorithm} 


\begin{figure*}[h]
\centering
\includegraphics[scale=.9]{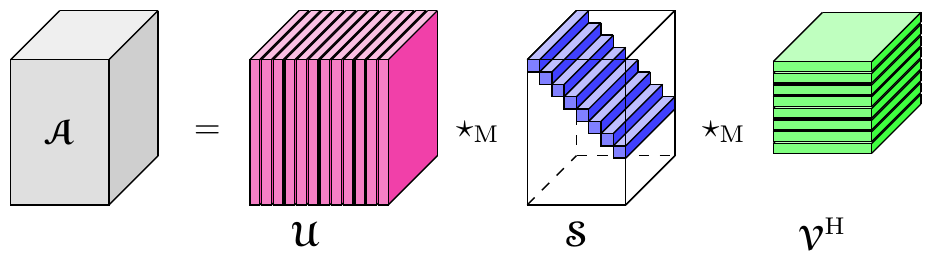}
\includegraphics[scale=.9]{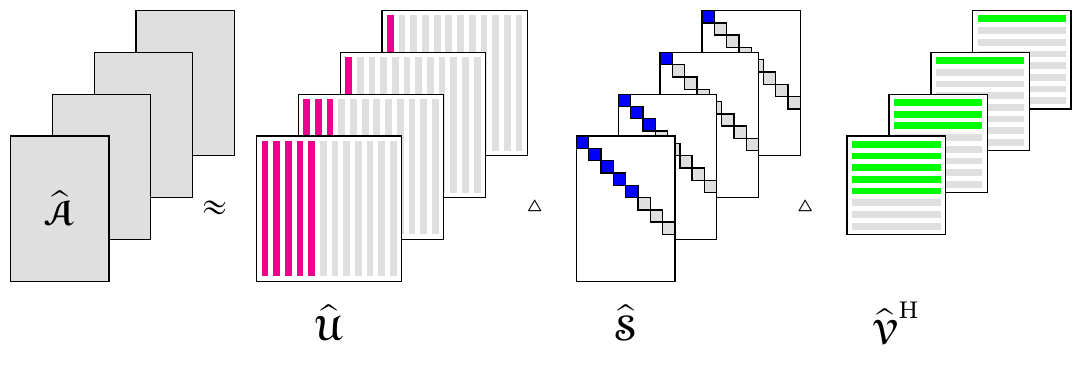}
\caption{\label{fig:tsvdII}Top: Illustration of the tensor SVD. Bottom: Example showing different truncations across the different SVDs of the faces, based on \Cref{alg:tsvdMII}. }
\end{figure*}

 
In  \Cref{sec:superior}, we compare our Eckart-Young results for tensors with the corresponding matrix approximations obtained by using the matrix-based Eckart-Young theorem.  But first, we need a few results that show what structure these tensor approximations inherit from $\MM$.

\section{Latent Structure}\label{sec:latent}
To understand why the proposed tensor decompositions are efficient at compression and feature extraction, we investigate the latent structure 
induced by the algebra in which we operate.  We shall also capitalize on this structural analysis in the the next section's proofs stating how the proposed t-SVDM and t-SVDMII decompositions can be used to devise superior approximations compared to their matrix counterparts.   

If $\V{v}, \V{c} \in \mathbb{C}^{1 \times 1 \times n}$, from  \Cref{alg:prod} we have
$$ \V{v} \starM \V{c} = ((\V{v} \times_3 \MM) \odot ((\V{c} \times_3 \MM))\times_3 \MM^{-1}, $$
where $\odot$ indicates pointwise scalar products on each face. 
Using the aforementioned definitions, this expression is tantamount to
\begin{equation}
 \label{eq:mytwist} 
 \mbox{\tt twist}[\left( ( \MM^{-1} \mbox{\tt diag}(\MM \, \V{c}_{(3)}) \MM \, \V{v}_{(3)} \right)^{\top}]  = 
 \mbox{\tt twist}[ {\V{v}_{(3)}}^\top \MM^\top \mbox{\tt diag} (\hat{\V{c}}) \MM^{-\top}] \end{equation}
 Note that $\MM \V{c}_{(3)}$ is mathematically equivalent 
to forming the tube fiber $\hat{\V{c}}$, and 
$\tt diag$ applied to a tube fiber works analogously to $\tt diag$ applied to that tube fiber's column vector equivalent.  Further, the transpose is a real transpose, stemming from the definition of the mode-3 product. 

Let $\vec{\T{B}}$ be any element in 
$\mathbb{C}^{m \times 1 \times n}$ 
and consider computing the product $\vec{\TQ} : = \vec{\T{B}} \starM \V{c}$.  In \cite{LAA} it was shown that the $j^{th}$ tube fiber entry in $\vec{\TQ}$ is effectively the product of the tubes $\vec{\T{B}}_{j,1,:} \starM \V{c}$.   From (\ref{eq:mytwist}) we have
\begin{equation}  \label{eq:flat} \sq{\vec{\TQ}} = \sq{\vec{\TB}} \left( \MM^{\top} \mbox{\tt diag}( \hat{\V{c}} ) \MM^{-\top} \right). \end{equation}   The matrix in the parenthesis on the right is an element of the space of all matrices described 
$ \mathcal{X}_{\MM} = \{ \M{X} : \M{X} = \MM^{\top} \M{D} \MM^{-\top} \}$, where $\M{D}$ is a diagonal matrix.   
This realization brings us to a major result.  
\begin{theorem} Suppose that 
$\TA  =  \T{U} \starM \underbrace{\T{S} \starM \T{V}^{\rm H}}_{\T{C}}  =  \sum_{i=1}^{t} \T{U}_{:,i,:} \starM \T{C}_{i,:,:} .$
Then
\begin{equation} \label{eq:latslice} \sq{\TA_{:,k,:} }  = \sum_{i=1}^{t} \sq{\T{U}_{:,i,:} } \MR[\T{C}_{i,k,:}], 
                                                    =  \sum_{i=1}^{t} \M{U}_i \MR[\T{C}_{i,k,:}].  ,\end{equation}
where $\MR[ \V{v} ] := \MM^{\top} \mbox{\rm diag}( \hat{\V{v}} ) \MM^{-\top}$ 
maps a tube fiber to a matrix in the set $\mathcal{X}_\MM$. 
\end{theorem}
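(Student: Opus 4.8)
The plan is to reduce the statement about lateral slices of $\TA$ to the single-column identity~\eqref{eq:flat} already established for products $\vec{\TB}\starM\V{c}$, applied slice-by-slice in the second index. First I would expand $\TA = \TU\starM\TC$ and extract the $k$th lateral slice. Using the (known, from \cite{LAA}) fact that the lateral slices of a $\starM$-product are obtained by "matrix-times-vector" bookkeeping in the free module of tube fibers — i.e. $\TA_{:,k,:} = \sum_{i=1}^{t}\TU_{:,i,:}\starM \TC_{i,k,:}$, where each $\TC_{i,k,:}$ is a $1\times 1\times n$ tube fiber acting as a scalar — I reduce the claim to showing that for a single lateral slice $\vec{\TB}\in\mathbb{C}^{m\times 1\times n}$ and a tube $\V{c}\in\mathbb{C}^{1\times 1\times n}$ we have $\sq{\vec{\TB}\starM\V{c}} = \sq{\vec{\TB}}\,\MR[\V{c}]$ with $\MR[\V{c}] = \MM^{\top}\mathrm{diag}(\hat{\V{c}})\MM^{-\top}$. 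But this is exactly equation~\eqref{eq:flat} with $\MR[\V{c}]$ the matrix in parentheses there; so the per-term identity is already in hand.

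Next I would assemble the sum. Applying $\sq{\cdot}$ is linear (it just reindexes entries), so $\sq{\TA_{:,k,:}} = \sum_{i=1}^{t}\sq{\TU_{:,i,:}\starM \TC_{i,k,:}}$, and substituting the per-term identity gives $\sum_{i=1}^{t}\sq{\TU_{:,i,:}}\,\MR[\TC_{i,k,:}]$. Writing $\M{U}_i := \sq{\TU_{:,i,:}}$ yields the second displayed form. The only genuine content beyond \eqref{eq:flat} is justifying that the lateral-slice structure of a $\starM$-product decomposes as the stated finite sum over tube-fiber "scalars"; I would either cite the corresponding statement in \cite{LAA} (the free-module interpretation referenced after \Cref{alg:prod}) or verify it directly from \Cref{alg:prod} by noting that $\widehat{\TA}_{:,k,i} = \sum_j \widehat{\TU}_{:,j,i}(\widehat{\TC})_{j,k,i}$ is an ordinary matrix-vector product on each frontal face $i$, then applying $\times_3\MM^{-1}$ and using linearity of the mode-3 map to distribute across the sum.

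I expect the main obstacle to be purely notational rather than mathematical: carefully tracking which objects are $m\times 1\times n$ tensors versus $m\times n$ matrices versus tube fibers, and confirming that $\mathrm{diag}$, twist/squeeze, and the real (not conjugate) transpose coming from the mode-3 product all compose as in \eqref{eq:mytwist}–\eqref{eq:flat}. In particular one must check that the $\hat{\cdot}$ in $\MR[\V{v}] = \MM^{\top}\mathrm{diag}(\hat{\V{v}})\MM^{-\top}$ refers to the same transform $\times_3\MM$ used throughout, so that no extra conjugation sneaks in when $\TC = \TS\starM\TV^{\rm H}$ involves a conjugate transpose — but since the identity is proven for arbitrary tube fibers $\TC_{i,k,:}$, the presence of $\TV^{\rm H}$ inside $\TC$ causes no difficulty: we only ever use the entries of $\TC$, not its factored form. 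Once these bookkeeping points are nailed down, the proof is a one-line substitution of \eqref{eq:flat} into the term-by-term lateral-slice expansion.
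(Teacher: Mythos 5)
Your proposal is correct and follows essentially the same route as the paper, which states the theorem as an immediate consequence of \eqref{eq:flat} applied term-by-term to the lateral-slice expansion $\TA_{:,k,:} = \sum_{i=1}^{t}\TU_{:,i,:}\starM\TC_{i,k,:}$ (the paper offers no separate proof beyond this). Your explicit face-wise verification of that expansion via \Cref{alg:prod} and the linearity of $\times_3\MM^{-1}$ and of $\sq{\cdot}$ simply fills in the bookkeeping the paper leaves implicit.
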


Thus, each {\bf lateral} slice of $\TA$ is a weighted combination of ``basis'' matrices given by 
$ \M{U}_i  := \sq{ \TU_{:,i,:} }$, but the weights, instead of being scalars, are matrices $\MR[\T{C}_{i,k,:}]$ from the matrix algebra induced by the choice of $\MM$.  
For $\MM$ the DFT matrix, the matrix algebra is the algebra of circulants.

\section{Tensors and Optimal Approximations}  \label{sec:superior}

In \cite{HaoEtAl2013}, the claim was made that a t-SVD to $k$ terms could be superior to a matrix-SVD based compression to $k$ terms.   Here, we offer a formal proof, then discuss the relative meaning of $k$.   Then in the next section, we discuss what can be done to obtain further compression.

\subsection{Theory:  T-rank vs. Matrix Rank}


Let us assume that our data is a collection of $\ell$, $m \times n$ matrices $\MD_i, i=1,\ldots \ell$.  For example, $\MD_i$ might be a gray scale image, or it might be the values of a function discretized on a 2D uniform grid.  Let $\V{d}_i = \mbox{\tt vec}(\MD_i)$, so that $\V{d}_i$ has length $mn$.  

We put samples into a matrix (tensor) from left to right:
\[ \MA = \left[ \V{d}_1,\ldots,\V{d}_\ell \right] \in \mathbb{C}^{mn \times \ell} \qquad
\TA = \left[ \twist{\MD_i},\ldots, \twist{\MD_\ell} \right] \in \mathbb{C}^{m \times \ell \times n}. \]
Thus, $\MA, \TA$ represent the same data, just in different formats.
It is first instructive to consider in what ways the t-rank, $t$, of $\TA$ and the matrix rank $r$ of $\MA$ are related.  Then we will move on to relating the optimal t-rank $k$ approximation of $\TA$ with the optimal rank-$k$ approximation to $\MA$.   

 \begin{theorem}  \label{thm:trank} 
The t-rank, $t$, of $\TA$ is less than or equal to the rank, $r$, of $\MA$.  Additionally, since $t \le \min(m,\ell)$, if $m < r$, then $t < r$.   
    \end{theorem}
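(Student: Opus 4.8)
The plan is to exploit the explicit formula for lateral slices from the Latent Structure section, namely that each lateral slice of $\TA$ satisfies $\sq{\TA_{:,k,:}} = \sum_{i=1}^{t} \M{U}_i \MR[\T{C}_{i,k,:}]$ where $t$ is the t-rank of $\TA$ and $\M{U}_i = \sq{\TU_{:,i,:}}$. First I would observe that the columns of $\MA$ are exactly the vectorizations of the lateral slices: $\V{d}_k = \vvec(\MD_k) = \vvec(\sq{\TA_{:,k,:}})$. So the column space of $\MA$ is $\mathrm{span}\{\vvec(\sq{\TA_{:,k,:}}) : k = 1,\ldots,\ell\}$, and hence $r = \dim$ of this span.

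Next I would vectorize the lateral-slice identity. Using $\vvec(\M{U}_i \M{X}) = (\M{X}^\top \otimes \M{I})\,\vvec(\M{U}_i)$ — or more simply, just observing that $\vvec(\M{U}_i \MR[\T{C}_{i,k,:}])$ lies in the span of the vectors $\{\vvec(\M{U}_i \M{X}) : \M{X} \in \mathcal{X}_\MM\}$ — each column $\V{d}_k$ lies in the subspace $\mathcal{S} := \mathrm{span}\{\vvec(\M{U}_i \M{X}) : i = 1,\ldots,t,\ \M{X} \in \mathcal{X}_\MM\}$, which is spanned by the columns of $\M{U}_i$ packed appropriately. The cleanest bound: for each $i$, $\mathrm{span}\{\vvec(\M{U}_i \M{X}) : \M{X}\} \subseteq \mathrm{span}\{\vvec(\M{U}_i \M{E}_{ab})\}$ where $\M{E}_{ab}$ runs over all $n\times n$ elementary matrices, but $\M{U}_i \M{E}_{ab}$ has rank at most one, and in fact $\vvec(\M{U}_i \M{E}_{ab})$ depends only on column $a$ of $\M{U}_i$ — so this span is contained in something of dimension at most... actually the slicker route is: every lateral slice is a sum of $t$ matrices each of which is $\M{U}_i$ times a matrix, so $\sq{\TA_{:,k,:}} \in \mathrm{span}_{\mathcal{X}_\MM}\{\M{U}_1,\ldots,\M{U}_t\}$ as a module, and vectorizing, $\V{d}_k$ lies in a subspace of $\mathbb{C}^{mn}$ of dimension at most $tn$ (at most $n$ degrees of freedom per $\M{U}_i$ since $\mathcal{X}_\MM$ is $n$-dimensional). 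That already gives $r \le tn$, which is too weak. The right statement needs $r \le t$ directly — hmm, that can't follow from a pure dimension count over all $k$.

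Let me reconsider: the theorem claims $t \le r$, not $r \le t$. So I have it backwards — I want to bound $t$ by $r$. The approach instead: note $t$ is the t-rank, i.e. $t = \max_j \rho_j$ where $\rho_j = \mathrm{rank}(\widehat{\TA}_{:,:,j})$. Meanwhile $\widehat{\TA} = \TA \times_3 \MM$ means each frontal slice $\widehat{\TA}_{:,:,j}$ is an $m\times\ell$ matrix obtained as a linear combination (with coefficients from row $j$ of $\MM$) of the frontal slices $\MA^{(1)},\ldots,\MA^{(n)}$ of $\TA$. But $\MA = [\V{d}_1,\ldots,\V{d}_\ell]$ with $\V{d}_k = \vvec(\MD_k)$, and $\MD_k = \sq{\TA_{:,k,:}}$ so the frontal slices of $\TA$ are built by reshaping columns of $\MA$. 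The key point: $\MA$ has rank $r$, so there is an $r$-dimensional subspace $\mathcal{W}\subseteq\mathbb{C}^{mn}$ containing every $\V{d}_k$; reshaping, every matrix $\MD_k$ lies in an $r$-dimensional subspace of $m\times n$ matrices, say with basis $\M{B}_1,\ldots,\M{B}_r$. Then $\MD_k = \sum_{s=1}^r \alpha_{ks}\M{B}_s$, so frontal slice $\TA_{:,:,j}$ of $\TA$ (whose columns are the $j$-th columns of the various $\MD_k$) equals $\sum_{s=1}^r \M{B}_s(:,j)\,\boldsymbol\alpha_{\cdot s}^\top$, a sum of $r$ rank-one matrices — so $\mathrm{rank}(\TA_{:,:,j})\le r$ for every $j$. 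Since $\widehat{\TA}_{:,:,j}$ is a linear combination of the $\TA_{:,:,j'}$... no wait, that doesn't bound its rank. Rather: $\mathrm{rank}(\widehat{\TA}_{:,:,j}) \le \sum_{j'} \mathrm{rank}(\TA_{:,:,j'}\cdot[\MM]_{jj'}) $ — a single term since it's $\sum_{j'}[\MM]_{jj'}\TA_{:,:,j'}$, rank of a sum is at most sum of ranks, each $\le r$, giving $nr$. Still too weak.

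\textbf{The real argument.} I would instead use the column-space directly: $\widehat{\TA}_{:,:,j}$ is an $m\times\ell$ matrix whose $k$-th column is $\widehat{\TA}_{:,k,j}$, and by the definition of $\times_3\MM$ applied fiberwise, this column equals $\sum_{j'}[\MM]_{jj'}\,\TA_{:,k,j'} = \sum_{j'}[\MM]_{jj'}\,\MD_k(:,j')$, i.e. it is $\MD_k$ times column $j$ of $\MM^\top$, a vector in the column space of $\MD_k$. Now column space of $\MD_k$ = column space of $\sq{\TA_{:,k,:}}$, and by the latent structure formula, each column of $\MD_k$ is $\sum_i \M{U}_i(\MR[\T{C}_{i,k,:}]\text{'s appropriate column})$... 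I think the intended clean proof is: vectorizing the lateral slice formula, $\V{d}_k \in \mathrm{span}\{\vvec(\M{U}_i\M{X}):i\le t, \M{X}\in\mathcal{X}_\MM\}$, a subspace $\mathcal{S}_t$ depending only on $\TU$ (the first $t$ lateral slices), and one shows $\dim\mathcal{S}_t \le $ ... this gives $r\le\dim\mathcal{S}_t$, the wrong direction again. I conclude the honest route is the one in the paper: likely they write $\TA = \TU_{:,1:t,:}\starM(\TS\starM\TV^{\rm H})$ and argue columns of $\MA$ are spanned by columns of the $mn\times (tn)$ matrix whose blocks are $\vvec(\M{U}_i\M{E})$, hence $r\le tn$; no — so the theorem as literally stated ($t\le r$) must go the other way: I'd show $\MA$'s rank is at least the t-rank by exhibiting, from a rank-$t$ frontal slice $\widehat{\TA}_{:,:,j^*}$, that its $t$ independent columns pull back to $t$ independent columns of $\MA$. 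Concretely, $\widehat{\TA}_{:,:,j^*} = \MA$ acted on by an invertible reshaping-and-combination that is injective on column space when $\MM$ is invertible, so $\mathrm{rank}\,\widehat{\TA}_{:,:,j^*} \le \mathrm{rank}\,\MA$; taking the max over $j^*$ gives $t\le r$. \textbf{The main obstacle} is making precise that the map $\MA \mapsto \widehat{\TA}_{:,:,j}$ is a composition of column-space-preserving or column-space-shrinking operations, i.e. that no frontal slice of the transformed tensor can have rank exceeding $\mathrm{rank}(\MA)$; once that linear-algebra bookkeeping is pinned down, the "$t\le\min(m,\ell)$ and hence $m<r\Rightarrow t<r$" addendum is immediate since $t\le m$ always. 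After that I would assemble: $t = \max_j\rho_j \le \mathrm{rank}(\MA) = r$, and $t\le\min(m,\ell)\le m$, so if $m<r$ then $t\le m < r$.
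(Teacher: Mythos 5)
Your final assembly ($t=\max_i\rho_i\le\operatorname{rank}(\MA)=r$, plus $t\le\min(m,\ell)\le m$ so $m<r\Rightarrow t<r$) is indeed the shape of the paper's argument, but the one claim that carries all the weight --- that $\operatorname{rank}\bigl(\widehat{\TA}_{:,:,i}\bigr)\le\operatorname{rank}(\MA)$ for every frontal slice $i$ --- is exactly what you leave open, labeling it ``the main obstacle,'' and the heuristic you offer for it does not work as stated. You describe $\MA\mapsto\widehat{\TA}_{:,:,i}$ as an ``invertible reshaping-and-combination that is injective on column space when $\MM$ is invertible.'' The $k$-th column of $\widehat{\TA}_{:,:,i}$ is $\sum_{j} m_{ij}\,\MD_k(:,j)$, i.e.\ the image of the column $\V{d}_k\in\mathbb{C}^{mn}$ of $\MA$ under a fixed $m\times mn$ linear map; such a map cannot be injective for $n>1$, nor need it be injective on the column space of $\MA$, and the invertibility of $\MM$ is irrelevant here. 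What saves the argument is not injectivity but mere linearity: since every frontal slice of $\widehat{\TA}$ equals a fixed linear map applied columnwise to $\MA$, its rank cannot exceed $\operatorname{rank}(\MA)$. Until that is made precise, the proposal is a plan rather than a proof --- particularly since your earlier attempts in the same write-up produced only the wrong-direction or too-weak bounds $r\le tn$ and $\operatorname{rank}\le nr$, so the reader cannot extract the missing step from context.

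For comparison, the paper closes this step with a rank-revealing factorization rather than any injectivity argument: write $\MA=\MG\MH^\top$ with $\MG\in\mathbb{C}^{mn\times r}$, $\MH^\top\in\mathbb{C}^{r\times\ell}$. Then the $j$-th frontal slice of $\TA$ is $\MG_{(j-1)m+1:jm,\,:}\,\MH^\top$ (the $j$-th row-block of $\MG$ times $\MH^\top$), so
\[
\widehat{\TA}_{:,:,i}=\sum_{j=1}^{n} m_{ij}\,\MG_{(j-1)m+1:jm,\,:}\,\MH^\top
=\Bigl(\sum_{j=1}^{n} m_{ij}\,\MG_{(j-1)m+1:jm,\,:}\Bigr)\MH^\top ,
\]
which factors through an $m\times r$ matrix and hence has rank at most $\min(m,r)$. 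This gives $\rho_i\le\min(m,r)$ for all $i$, so $t\le\min(m,r)\le r$, and the addendum follows as you wrote. Note this argument uses nothing about $\MM$ beyond linearity of $\times_3\MM$ (the paper explicitly remarks it holds even for $\MM=\M{I}$), which is a further sign that your appeal to ``$\MM$ invertible'' was aimed at the wrong mechanism.
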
  
\begin{proof}
The problem dimensions necessitate $t \le \min(m, \ell)$ and $r \le \ell$.  
Let $\MA = \M{G} \M{H}^\top$ be a rank-r factorization of $\MA$ such that $\MG$ is $mn \times r$ and $\M{H}^\top$ is $r \times \ell$. 
From the fact $\widehat{\TA} = \TA \times_3 \MM$, we can show the $m \times \ell$ sized $i^{th}$ frontal face of $\widehat{\TA}$ satisfies 
\begin{equation} \label{eq:tall} 
\widehat{\TA}_{:,:,i} = \sum_{j=1}^n m_{ij} \TA_{:,:,j} = \sum_{j=1}^n m_{ij} \M{G}_{(j-1)m+jm,:} \M{H}^\top
= (\sum_{j=1}^n m_{ij} \M{G}_{j-1)m + jm,:}) \M{H}^\top. \end{equation}
Clearly, the rank of this frontal slice is bounded above by $\min(m,r)$ since this is the maximal rank of the matrix in parenthesis.  
Then the singular values of the matrix $\widehat{\TA}_{:,:,i}$, satisfy $\hat{\sigma}_1^{(i)} \ge \hat{\sigma}_{2}^{(i)} \ge \dots \hat{\sigma}_{r_i}^{(i)}$, where $r_i \le \min(m,r)$.    
As $\widehat{\TS}_{j,j,i} = \hat{\sigma}_j^{(i)} $, 
 $\TS_{j,j,:} = \widehat{\TS}_{j,j,:} \times_3 \MM^{-1}$ for a particular value $j$ will be a non-zero tube fiber iff for any of the $i=1:n$, at least one $\hat{\sigma}_j^{(i)}$ is non-zero.  There can be at most $\min(m,r)$ non-zero tube fibers, so $t \le \min(m,r)$. 
 \end{proof}
  
Note that the proof was {\bf independent of the choice of $\MM$ as a multiple of a unitary matrix}.  In particular, it holds for $\MM = \M{I}$.  This means that simply the act of 'folding' the data matrix into a tensor may provide a reduced rank approximation (a rank-$r$ matrix goes to a t-rank $< r$ tensor).   Under a non-identity choice of $\MM$, though may reveal 
$t \ll r$.   
To make the idea concrete, let us consider an example in which the t-rank under $\starM$ for $\MM$ the DFT matrix is 1, but for which the matrix case does not reveal this structure. 
%
%
%

\begin{example} \label{ex:superiorB}
Let $\MU \in \mathbb{R}^{n\times n}$ invertible, and $\V{c}_i \in \mathbb{R}^{n}$, $i=1,\ldots,p$ with $p \le n$
be a set of independent vectors. Define $\TA$ such that $\TA_{:,i,:} = \mbox{\tt twist}(\MU \mbox{\tt circ}(\V{c}_i))$. 
Is is easy to see that the t-rank is 1.  

On the other hand, with $\MZ$ the circulant downshift matrix,
\[ \MA = (\M{I} \otimes \MU) \mbox{\rm{diag}}(\M{I}, \MZ,\ldots, \MZ^{n-1}) \bea \V{c}_1 & \V{c}_2 & \cdots & \V{c}_p \\
\V{c}_1 & \V{c}_2 & \cdots & \V{c}_p \\
\vdots & \vdots & \vdots & \vdots \\
\V{c}_1 & \V{c}_2 & \cdots & \V{c}_p \eea \]
The rank of the $\MA$ therefore is $p$.   Indeed, $\MA$ can be highly incompressible:  if $\MU$ and $\M{C}$ have orthonormal columns, then we can show $\| \MA - \MA_k \|_F^2 = (p-k)n$ for any $k < n$.  

\end{example}

 \subsection{Theory:  Comparison of Optimal Approximations}      
In this subsection, we want to compare the quality of approximations obtained by truncating the matrix SVD of the data matrix vs. truncating the t-SVDM of the same data as a tensor.  
In what follows, we again assume that $m n > \ell$ and that $\MA$ has rank $r \le \ell$.   

Let $\MA = \MU {\bf \Sigma} \MV^\top$ be the matrix SVD of $\MA$, and denote its best rank-$k$, $k < r$  approximation according to
\begin{equation} \label{eq:akcol} \M{C} = {\bf \Sigma} \MV^\top,  \qquad \MA_k := \MU_{:,1:k} \M{C}_{1:k,:} \Rightarrow (\MA_k)_{:,j} = \sum_{i=1}^k \MU_{:,i} c_{ij} . \end{equation} 
Lastly, we need the following matrix version of (\ref{eq:akcol}) which we reference in the proof:
        \begin{equation} \label{eq:cols}  \mbox{\tt reshape}( (\M{A}_k)_{:,j},[m,n]) = \sum_{i=1}^k \mbox{\tt reshape}(\M{U}_{:,i},[m,n]) c_{ij}, \qquad j=1,\ldots,k. \end{equation}

\begin{theorem} \label{th:tsvdopt} Given  $\M{A}$, $\TA$ as defined above, with $\M{A}$ having rank $r$ and $\TA$ having t-rank $t$, let $\M{A}_k$ denote 
the best rank-$k$ matrix approximation to $\M{A}$ in the Frobenius norm, where $k \le r$.  Let $\TA_k$ denote the best t-rank-$k$ tensor approximation under $\starM$, where $\MM$ is a multiple of a unitary matrix, to $\TA$ in the Frobenius norm.    
 Then 
\[  
\| \TS_{k+1: t,k+1: t,: } \|_F = \| \TA - \TA_k \|_F \le \| \MA - \MA_k \|_F .
\]
 \end{theorem}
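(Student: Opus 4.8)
The plan is to exploit the fact that both approximation errors can be expressed face-by-face in the transform domain, and then reduce the comparison to a statement about matrix SVDs of the individual frontal slices versus the matrix SVD of a tall stacking. First I would use \Cref{th:invariance} (applied through the scalar $c$) to write $\| \TA - \TA_k \|_F^2 = \frac{1}{|c|^2}\sum_{i=1}^n \|\widehat{\TA}_{:,:,i} - (\widehat{\TA}_k)_{:,:,i}\|_F^2$, and by the tensor Eckart-Young theorem each term equals $\sum_{j>k}(\hat\sigma_j^{(i)})^2$, so that $\|\TA - \TA_k\|_F^2 = \frac{1}{|c|^2}\sum_{i=1}^n\sum_{j>k}(\hat\sigma_j^{(i)})^2 = \|\TS_{k+1:t,k+1:t,:}\|_F^2$, which is the claimed equality on the left.

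Next I would bound this quantity from above by $\|\MA - \MA_k\|_F^2$. The idea is to produce a \emph{feasible} t-rank-$k$ tensor $\TB$ built directly from the matrix approximation $\MA_k$, and then invoke the optimality of $\TA_k$ over the set $\Gamma$ of t-rank-$k$ tensors to conclude $\|\TA - \TA_k\|_F \le \|\TA - \TB\|_F$; if I can further show $\|\TA - \TB\|_F \le \|\MA - \MA_k\|_F$ the theorem follows. Concretely, from \Cref{eq:akcol}–\Cref{eq:cols}, $\MA_k$ has columns $(\MA_k)_{:,j} = \sum_{i=1}^k \MU_{:,i} c_{ij}$ with only $k$ distinct ``basis'' columns $\MU_{:,1},\ldots,\MU_{:,k}$. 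Reshaping each basis column via $\mbox{\tt reshape}(\MU_{:,i},[m,n])$ and twisting it into an $m\times 1\times n$ tensor gives $k$ lateral slices; assembling these as $\TX \in \mathbb{C}^{m\times k\times n}$, I want to choose $\TY \in \mathbb{C}^{k\times p\times n}$ (with $p=\ell$ here) so that $\TB := \TX \starM \TY$ reproduces, slice by slice, the same matrices that $\MA_k$ reproduces as reshaped columns — i.e. $\sq{\TB_{:,j,:}} = \mbox{\tt reshape}((\MA_k)_{:,j},[m,n])$. This is where \Cref{eq:latslice} from the Latent Structure section is the crucial tool: it says $\sq{\TB_{:,j,:}} = \sum_{i=1}^k \M{U}_i \MR[\TY_{i,j,:}]$ where $\M{U}_i = \sq{\TX_{:,i,:}}$, so I need to solve for tube fibers $\TY_{i,j,:}$ whose images $\MR[\TY_{i,j,:}]$ under the map into $\mathcal{X}_\MM$ make the weighted sum of the $\M{U}_i$ equal the desired reshaped column. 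Since $\mathcal{X}_\MM$ contains the identity's scalar multiples (taking $\M{D}$ a multiple of $\M{I}$), the scalar weights $c_{ij}$ from the matrix factorization are realizable as particular choices of $\MR[\TY_{i,j,:}]$, so such a $\TY$ exists. With that $\TB$ in hand, $\TA - \TB$ and $\MA - \MA_k$ contain exactly the same entries (just reorganized between the tensor and matrix layouts, each lateral slice of the tensor corresponding to one reshaped column of the matrix), hence $\|\TA - \TB\|_F = \|\MA - \MA_k\|_F$, and optimality of $\TA_k$ over $\Gamma$ finishes the argument.

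I expect the main obstacle to be the bookkeeping in the ``feasibility'' step: verifying that the $k$ reshaped-and-twisted columns of $\MU$ really do form a valid $\TX$ and that a matching $\TY$ with $\TB = \TX\starM\TY \in \Gamma$ exists reproducing $\MA_k$ slice-by-slice. The subtlety is that the $\starM$ product mixes tube fibers through $\MM$, so one cannot naively expect a lateral slice of $\TX\starM\TY$ to be a plain linear combination of lateral slices of $\TX$ with scalar coefficients — it is a combination with \emph{matrix} coefficients drawn from $\mathcal{X}_\MM$, per \Cref{eq:latslice}. The resolution is precisely that $\mathcal{X}_\MM$ is rich enough to contain scalar matrices, so the matrix-SVD reconstruction is a special case; making this airtight (and checking the dimensions $\TY \in \mathbb{C}^{k\times \ell\times n}$) is the one place where care is needed, but it is conceptually routine once \Cref{eq:latslice} is invoked. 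Everything else — the transform-domain splitting, the per-face Eckart-Young bound, and the Frobenius-norm identity between the two data layouts — is immediate from the results already established.
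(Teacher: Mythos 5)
Your proposal is correct and follows essentially the same route as the paper: the paper also establishes the equality via the per-face (transform-domain) Eckart--Young argument, and then builds a feasible t-rank-$k$ competitor from $\MA_k$ by twisting the reshaped columns of $\MU_{:,1:k}$ into lateral slices and realizing each scalar weight $c_{ij}$ as $\MR[\V{c}_{ij}] = c_{ij}\M{I}$ with the tube fiber $c_{ij}\MM^{-1}\V{e}$, exactly the use of scalar matrices in $\mathcal{X}_\MM$ you describe. The optimality of $\TA_k$ over $\Gamma$ then gives the inequality, as in your argument.
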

 

\begin{proof} 
Consider (\ref{eq:cols}).  The multiplication by the scalar $c_{ij}$ in the sum is equivalent to multiplication from the right by $c_{ij} \M{I}$.
However, since $\MM = c \M{W}$ for unitary $\M{W}$, we have $c_{ij} \M{I} = \MM^{\top} \mbox{diag}( c_{ij} \V{e} ) \MM^{-\top}$, where $\V{e}$ is the vector of all ones.   Define the tube fiber $\TC_{i,j,:}$ 
from the matrix-vector product $c_{ij} \MM^{-1} \V{e}$ oriented into the 3rd dimension.   Then, $c_{ij} \M{I} = \MR[\TC_{i,j,:}]$.   
Now we observe that (\ref{eq:cols})
can be equivalently expressed as 
\begin{equation} \label{eq:colstwo} \mbox{\tt reshape}((\M{A}_k)_{:,j}) = \sum_{i=1}^k \mbox{\tt reshape}(\M{U}_{:,i}) \MR[ \V{c}_{ij}], \qquad j=1,\ldots,k.\end{equation}
These can be combined into a tensor equivalent
\[ \T{Z}_k := \sum_{i=1}^k \T{Q}_{:,i,:} \starM \T{C}_{i,:,:} = \TQ \starM \TC \qquad \mbox{ where } \]
 \[ (\T{Z}_k)_{:,j,:} = \twist{\mbox{\tt reshape}((\MA_k)_{:,j},[m,n])}, \qquad \TQ_{:,i,:} = \twist{\mbox{\tt reshape}(\MU_{:,i},[m,n])}. \]
Since $\widehat{\TC}_{:,:,i} = \M{\Sigma}_{1:k,1:k} \M{V}_{:,1:k}^\top$, the t-rank of $\TC$ is $k$.   The t-rank of $\TQ$ must also not be smaller than $k$, by \Cref{thm:trank}.
  
Thus, given the definition of $\T{A}_k$ as the minimizer over all such $k$-term `outer-products' under $\starM$, it follows that
\[
 \| \T{S}_{k+1:t,k+1:t,:} \|_F  =   \| \TA - \T{A}_k \|_F 
                                            \le  \| \TA - \T{Z}_k \|_F  
                                             =  \| \MA - \MA_k  \|_F .  \]
     \end{proof} 

Here is one small example showing strict inequality is possible.
Additional supporting examples are in the numerical results. 
%

\begin{example}
Given $\MM$ the DFT matrix, and let 
$$\MA = \bea 1 & 1 \\ 1 & 4 \\ 0 & 0 \\ 0 & -3 \eea,  \qquad \mbox{with } \MA_1 = \sigma_1 \Vu_1 \Vv_1^{\top}.$$   It is easily verified that $\| \MA - \MA_1 \|_F = \sqrt{\sigma_2} = 1$. 
It is easy to show 
\[  \hat{\TA}_{:,:,1} = \bea 1 & 1 \\ 1 & 1 \eea 
%
\qquad \hat{\TA}_{:,:,2} = \bea 1 & 1 \\ 1 & 7 \eea
, \]
Setting $\TA_1 = \TU_{:,1,:} \starM \TS_{1,1,:} \starM \TV_{:,1,:}^{\top}$, then
\[   \| \TA - \TA_1 \|_F  =  \| \TS_{2,2,:} \|_F 
                               =  \left\| \M{M}^{\rm H} \bea 0 \\ \hat{\sigma}_2^{(2)} \eea \right\|_F 
                               =  \frac{1}{\sqrt{2}} \hat{\sigma_2}^{(2)} \approx .59236 < 1. \]
\end{example}

In the next subsection we discuss the level of approximation provided by the output of \Cref{alg:tsvdMII} by relating it back to the truncated t-SVDM and also to truncated matrix SVD.  First, we need a way to relate storage costs. 


\begin{theorem} \label{thm:doubleopt} Let $\TA_k$ be the t-SVDM t-rank $k$ approximation to $\TA$, and suppose its implicit rank is $r$. 
Define $\mu = \|\TA_k \|_F^2/\|\TA \|_F^2$.
There exists $\gamma \le \mu$ such that the
t-SVDMII approximation, $\TA_{\bfrho}$, obtained for this $\gamma$ in \Cref{alg:tsvdMII}, has implicit rank less than or equal to an implicit rank of $\TA_k$ and
\[ \| \TA - \TA_{\bfrho} \|_F \le \| \TA - \TA_k \|_F \le \| \MA - \MA_k \|_F. \]
\end{theorem}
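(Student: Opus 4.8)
The plan begins with the observation that the rightmost inequality, $\|\TA-\TA_k\|_F \le \|\MA-\MA_k\|_F$, is already \Cref{th:tsvdopt}; so the whole job is to produce a $\gamma \le \mu$ for which the multi-rank $\bfrho$ returned by \Cref{alg:tsvdMII} satisfies $\sum_{i=1}^n\rho_i \le r$ together with $\|\TA-\TA_\bfrho\|_F \le \|\TA-\TA_k\|_F$. I would argue entirely in the transform domain. Since $\MM = c\M{W}$ with $\M{W}$ unitary, \Cref{th:invariance} and \Cref{eq:norm} give $\|\TX-\TY\|_F^2 = |c|^{-2}\sum_{i=1}^n\|\widehat{\TX}_{:,:,i}-\widehat{\TY}_{:,:,i}\|_F^2$ for any tensors of matching size, and $\|\TA\|_F^2 = |c|^{-2}\|\widehat{\TS}\|_F^2$, so the constant $|c|^{-2}$ cancels in the ratio defining $\mu$. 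In the transform domain each frontal slice of $\widehat{\TA}_k$ (and of $\widehat{\TA}_\bfrho$) is a best low-rank truncation of the corresponding slice $\widehat{\TA}_{:,:,i}$, so each of these approximations is fully determined by the set of squared singular values $(\hat{\sigma}_j^{(i)})^2$ it retains, and (by Pythagoras on each slice) its squared Frobenius error equals $|c|^{-2}$ times the sum of the squared singular values it discards.

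Next I would extract the two ``retention rules'' and the single inequality that drives the argument. Let $v_1 \ge v_2 \ge \cdots \ge v_K > 0$ be all the $(\hat{\sigma}_j^{(i)})^2$ sorted in descending order and $w_\ell := \sum_{j=1}^\ell v_j$ their cumulative sums (these are the vectors $\V{v}$ and $\V{w}$ of \Cref{alg:tsvdMII}; here $K=\sum_i r_i$ and $r_i$ is the rank of the $i$th frontal slice of $\widehat{\TA}$), so $w_K = \|\widehat{\TS}\|_F^2$ and $\mu = \|\widehat{\TA}_k\|_F^2/w_K$. By \Cref{alg:one}, $(\widehat{\TA}_k)_{:,:,i}$ is the best rank-$\min(k,r_i)$ approximation of $\widehat{\TA}_{:,:,i}$, so $\TA_k$ retains precisely $r=\sum_i\min(k,r_i)$ of the $v_j$'s — this number being its implicit rank — and hence
\[ \|\widehat{\TA}_k\|_F^2 \;\le\; w_r . \]
On the other hand, for a given $\gamma$, \Cref{alg:tsvdMII} takes the first index $J$ with $w_J > \gamma\,w_K$ and then keeps, across all slices, exactly the squared singular values that are $\ge v_J$, i.e.\ the $J$ globally largest (possibly a few more if $v_J$ occurs with multiplicity $>1$). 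Therefore $\sum_i\rho_i \ge J$ and the retained energy is $\|\widehat{\TA}_\bfrho\|_F^2 \ge w_J$.

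Finally I would set $\gamma = \mu$ (and $\gamma$ marginally below $\mu$ in the degenerate case $\|\widehat{\TA}_k\|_F^2 = w_r$). Then $w_J > \gamma\,w_K = \|\widehat{\TA}_k\|_F^2$, and combining this with $\|\widehat{\TA}_k\|_F^2 \le w_r$ and the minimality of $J$ forces $J \le r$; hence $\sum_i\rho_i \le r$, i.e.\ $\TA_\bfrho$ has implicit rank at most that of $\TA_k$. Moreover $\|\widehat{\TA}_\bfrho\|_F^2 \ge w_J > \|\widehat{\TA}_k\|_F^2$, so $\|\TA-\TA_\bfrho\|_F^2 = |c|^{-2}\bigl(w_K - \|\widehat{\TA}_\bfrho\|_F^2\bigr) \le |c|^{-2}\bigl(w_K - \|\widehat{\TA}_k\|_F^2\bigr) = \|\TA-\TA_k\|_F^2$, and chaining with \Cref{th:tsvdopt} yields the displayed string of inequalities. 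The step I expect to be the crux is $J \le r$: this is precisely the statement that the energy budget $\gamma = \mu$ translates into a multi-rank budget no bigger than the implicit rank of $\TA_k$, and it works only because $\TA_k$ itself keeps just $r$ squared singular values and therefore cannot hold more energy than the $r$ globally largest, $w_r$. The one delicate point is the multiplicity bookkeeping at the cut value $v_J$ — where ``keep everything $\ge v_J$'' can push $\sum_i\rho_i$ above $J$, or where $w_J = \|\widehat{\TA}_k\|_F^2$ exactly — which I would dispatch under the mild, generically satisfied hypothesis that the nonzero squared singular values on the frontal slices of $\widehat{\TA}$ are distinct, so that the cut set is unambiguous.
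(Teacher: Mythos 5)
Your proposal is correct in substance but takes a genuinely different route from the paper's. The paper argues by an iterative exchange construction: it initializes $\rho_i = k$ (i.e., starts from $\TA_k$) and repeatedly swaps in an omitted transform-domain singular value whose square exceeds the combined squared energy of several smaller included ones, removing those, so that each pass decreases the error while not increasing the implicit rank, terminating at a threshold-type truncation of the kind \Cref{alg:tsvdMII} produces. You instead argue globally and in one shot: since $\TA_k$ retains exactly $r$ of the squared face-wise singular values, its retained energy is at most $w_r$, the sum of the $r$ globally largest, so the choice $\gamma=\mu$ forces the algorithm's cutoff index to satisfy $J\le r$, whence $\TA_{\bfrho}$ keeps at least as much energy with no more terms, and \Cref{th:tsvdopt} supplies the final inequality. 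Your version buys an explicit $\gamma$ (the paper asserts existence but never exhibits one) and a cleaner error accounting via cumulative sums; the paper's version is more constructive about how $\bfrho$ deviates from the uniform truncation, but is sketchier, and in particular never verifies that its terminal $\bfrho$ is realized by \Cref{alg:tsvdMII} for some admissible $\gamma$. The caveat you flag about ties at the cut value is real and not peculiar to your argument: because the algorithm keeps every $\hat{\sigma}_j^{(i)}\ge\tau$, repeated singular values can push $\sum_i\rho_i$ above $J$ (and above $r$), and one can build small examples with ties where no admissible $\gamma$ satisfies both conclusions simultaneously, so either your genericity assumption (distinct nonzero squared singular values) or a tie-breaking convention in \Cref{alg:tsvdMII} is genuinely needed; the paper's own proof silently ignores the same issue.
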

\begin{proof}
From \Cref{th:tsvdopt} that $\| \TA - \TA_k \|_F^2 = \frac{1}{c} \sum_{j=1}^n \sum_{i=k+1}^n (\hat{\sigma}_i^{(j)})^2$.  The proof is by construction using $\TA_k$ as the starting point.  

Set $\rho_i = k$ to start.   
 Let $C = \{ \hat{\sigma}_{j}^{(i)} < \sigma_* | 1 \le j \le \rho_i, \forall i \}$ 
where 
\[ \hat{\sigma}_* = \max_{i=1,\ldots,n} \sigma_{\rho_i+1}^{(i)}. \]  
In other words, we look at the union of the singular values in each face of $\widehat{\TA}$ that were omitted from the current approximation, to see if there is at least one that is larger than those that were included in the approximation.   If the set is 
empty, then $\TA_{\bfrho} = \TA_k$ and we are done.   

Otherwise, the $i*^{th}$ face has a singular value that is larger than the singular values that were included in the approximation $\TA_k$.
For convenience, label elements of $C$ such that they are in increasing order ($c_1 \le c_2 \le c_3...$).  
Define $\pi_p = \sum_{i=1}^{p} c_i^2$, and $p$ is less than or equal to the cardinality of $C$.  There must exist 
at least one value of $p$ such that $\hat{\sigma}_{i*}^2 > \pi_p$.  
Set $\rho_{i*} \leftarrow  \rho_{i*}+1$, 
and reduce the $p$ values of the $\rho_i$ that correspond to $c_1,\ldots,c_p$.

Then the error $\| \TA - \TA_\bfrho \|_F^2$ has been decreased by an amount $\sigma_*^2$ while the error is simultaneously increased by $\sum_{k=1}^p  c_k^2$.  We can take $p$ as large as possible so that the relative increase in the error keeps the total below $\mu$.  
Thus, the implicit rank has decreased by $p-1$ but error remains bounded by $\mu$.  This process can be repeated until
we reach an iteration when $C$ is empty.   
\end{proof}



\subsection{Storage Comparisons}
Let us suppose that $\kappa$ is the truncation parameter for the tensor approximation and $k$ is the truncation parameter for the matrix approximation.   \Cref{tab:storeone} gives a comparison of storage for the methods we have discussed so far.  Note that for the t-SVDMII, it is necessary to work only in the transform domain, as moving back to the spatial domain would cause fill and unnecessary storage.  We often use $\MM$ that can be applied using fast transform techniques (such as the DCT, DFT, or discrete wavelet transform), so we do not include 
storage costs associated with $\MM$.  Storage of $\MM$ is discussed further in \Cref{ssec:newsto}.
\begin{table}[h] \label{tab:storeone}
\begin{tabular}{ccc}
Storage for basis $\MU_k$ & Storage for $C = \MS_k \MV_k^{\rm H}$ & total implicit storage $\MA_k$ \\ \hline
  $kmn$                   & $kp$   & $k(mn + p)$ \\ \hline \hline 
Storage for basis $\TU_\kappa$ & Storage for $\TC = \TS_\kappa \starM \TV_\kappa^{\rm H}$ & total implicit storage $\TA_\kappa$ \\ \hline
  $ \kappa mn $                 &  $\kappa p n$ & $ \kappa mn + \kappa p n$\\ \hline
Storage for $\widehat{\TU}_{\bfrho}$  & Storage for $\widehat{\TC}_{\bfrho}$ & total implicit storage $\TA_\bfrho$ \\ \hline
$m r $ & $rp$  & $ (m+p)r  $\\ \hline
\end{tabular}
\caption{Storage costs using $k$-term truncated SVD expansion vs. $\kappa$-term truncated t-SVDM expansion vs. t-SVDMII.  Recall that for the $t-SVDMII$, we store terms in the transform domain.  Recall the implicit rank for $\TA_\bfrho$ is $r = \sum_{i=1}^n \rho_i$.}
\end{table}

\paragraph{Discussion}
If $\kappa = k$, the theorem says approximation error is at least as good than the corresponding matrix approximation.  In applications where we only need to store the basis terms, e.g. to do projections, {\it the basis for the tensor approximation is better in a relative error sense than the basis for the matrix case, for the same storage}.    However, unless $n=1$, if we need to store both the basis {\it and the coefficients}, we will need more storage for the tensor case {\it if} we need to take $\kappa = k$.    
Fortunately in practice, $\| \TA - \TA_{\kappa} \|_F \le \| \MA - \MA_k \|_F$ for $\kappa < k$.  Indeed, we already showed an example where the error is zero for $\kappa = 1$, but $k$ had to be much larger to achieve exact approximation.  If $\frac{\kappa}{k} < \frac{m + \frac{p}{n}}{m + p}$, then the total implicit storage of the tensor approximation of $\kappa$ terms is less than the total storage for the matrix case of $k$ terms.  

Compared to the matrix SVD, the t-SVDMII approach can provide compression for at least as good, or better, an approximation level as indicated by the theorem.  Of course, $\MM$ should be an appropriate one given the latent structure in the data.
The t-SVDMII approach allows us to account for the more “important” features (e.g. low frequencies, multidimensional correlations), and therefore impose a larger truncation on the corresponding frontal faces because those features contribute more to the global approximation.   Truncation of t-SVDM by a single truncation index, on the other hand, effectively treats all features equally, and always truncates each $\widehat{\TA}_{:,:,i}$ to $k$ terms, which depending on the choice of $\MM$ may not be as good.    This is demonstrated in the numerical results section.


\section{Comparison to Other Tensor Decompositions} \label{sec:hosvd}

In this section, we compare to the two other types of tensor representations described in the introduction:  truncated HOSVD and CP types of decomposition.

\subsection{Comparison to tr-HOSVD}
In
this section, we wish to show how tr-HOSVD can be expressed using a $\starM$ product.  Then we can compare our truncated results to the tr-HOSVD.   


The truncated HOSVD (tr-HOSVD) is formed by truncating to $k_1, k_2, k_3$ columns, respectively, the factor matrices $\MQ, \MW, \MZ$, and forming the $k_1 \times k_2 \times k_3$ core tensor as
\[ \TC_{\bf k} : = \TA \times_1 \MQ_{:,1:k_1}^{\top} \times_2 \MW_{:,1:k_2}^{\top} \times_3 \MZ_{:,1:k_3}^{\top} , \]
where $\bf k$ denotes the triple $(k_1,k_2,k_3)$.
The tr-HOSVD approximation then
\[ \TA_{\bf k} =  \TC_{\bf k} \times_1 \MQ_{:,1:k_1} \times_2 \MW_{:,1:k_2} \times_3 \MZ_{:,1:k_3} \]

We now prove the following theorem that shows the tr-HOSVD can be represented under $\starM$ when $\MM = \M{Z}^{\top}$. 
\begin{theorem}  Define the $n \times n$ matrix $\MM$ as $\MM = \M{Z}^{\top}$ (since $\M{Z}$ is unitary, it follows that $\MM^{-1} = \M{Z}$), and define $m \times k_1 \times n$ and $p \times k_2 \times n$ 
tensors in the transform space according to 
\[ \widehat{\TQ}_{:,:,i} = \MQ,   \quad \mbox{and} \quad \widehat{\TW}_{:,:,i} = \MW \quad \text{for} \quad i = 1,\ldots,n .\]
Then $\TQ = \widehat{\TQ} \times_3 \MM^{-1}, \TW = \widehat{\TW} \times_3  \MM^{-1} $ and  
it is easy to show that $\TQ$, $\TW$ are unitary tensors.  Define $\widehat{\TP}$ as the $p \times p \times n$ tensor with identity matrices 
on faces $1$ to $k_3$ and 0 matrices from faces $k_3+1$ to $n$.  
 
Let $ {\TC} = \TQ_{:,1:k_1,:}^{\top} \starM \TA \starM  \TW_{:,1:k_2,:}$.  Then 
\[ \TA_{\bf k} = \TQ_{:,1:k_1,:} \starM {\TC} \starM \TW_{:,1:k_2,:}^{\top} \starM \TP . \]
\end{theorem}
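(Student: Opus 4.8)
The plan is to reduce the tensor-product identity to a face-by-face statement in the transform domain, exploiting that $\starM$ with $\MM = \M{Z}^\top$ acts independently on each frontal slice of $\widehat{(\cdot)}$. First I would apply the transform $\times_3 \MM$ to both sides of the claimed identity and use the definition of $\starM$ (Algorithm~\ref{alg:prod}): the right-hand side becomes, on face $i$, the product $\widehat{\TQ}_{:,1:k_1,i}\,\widehat{\TC}_{:,:,i}\,\widehat{\TW}_{:,1:k_2,i}^{\top}\,\widehat{\TP}_{:,:,i}$. By construction $\widehat{\TQ}_{:,:,i} = \MQ$ and $\widehat{\TW}_{:,:,i} = \MW$ for every $i$, so $\widehat{\TQ}_{:,1:k_1,i} = \MQ_{:,1:k_1}$ and similarly for $\MW$; also $\widehat{\TP}_{:,:,i}$ is the $p\times p$ identity for $i\le k_3$ and the zero matrix for $i>k_3$. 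Thus face $i$ of the right-hand side equals $\MQ_{:,1:k_1}\widehat{\TC}_{:,:,i}\MW_{:,1:k_2}^{\top}$ for $i \le k_3$ and $\mathbf{0}$ for $i > k_3$.

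Next I would compute $\widehat{\TC}_{:,:,i}$ explicitly. Since $\TC = \TQ_{:,1:k_1,:}^{\top} \starM \TA \starM \TW_{:,1:k_2,:}$, transforming gives $\widehat{\TC}_{:,:,i} = \widehat{\TQ}_{:,1:k_1,i}^{\top}\,\widehat{\TA}_{:,:,i}\,\widehat{\TW}_{:,1:k_2,i} = \MQ_{:,1:k_1}^{\top}\,\widehat{\TA}_{:,:,i}\,\MW_{:,1:k_2}$. So face $i$ of the right-hand side (for $i\le k_3$) is $\MQ_{:,1:k_1}\MQ_{:,1:k_1}^{\top}\,\widehat{\TA}_{:,:,i}\,\MW_{:,1:k_2}\MW_{:,1:k_2}^{\top}$. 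The remaining task is to identify $\widehat{\TA}_{:,:,i}$ in terms of the HOSVD mode-3 factor $\MZ$: because $\MM = \M{Z}^{\top}$, the $i$-th frontal slice of $\widehat{\TA} = \TA \times_3 \M{Z}^{\top}$ is exactly the mode-3 contraction picking out the $i$-th column of $\MZ$, i.e. $\widehat{\TA}_{:,:,i}$ corresponds (via the reshape/unfold identities in \Cref{sec:back}) to $\TA \times_3 (\M{Z}_{:,i})^{\top}$. Therefore the full right-hand side, transformed, has $i$-th face $\MQ_{:,1:k_1}\MQ_{:,1:k_1}^{\top}(\TA\times_3 \M{Z}_{:,i}^{\top})\MW_{:,1:k_2}\MW_{:,1:k_2}^{\top}$ for $i\le k_3$ and zero otherwise, which is precisely $\widehat{\TA_{\bf k}}_{:,:,i}$ obtained by transforming the defining expression $\TA_{\bf k} = \TC_{\bf k}\times_1 \MQ_{:,1:k_1}\times_2\MW_{:,1:k_2}\times_3\MZ_{:,1:k_3}$ with $\times_3 \M{Z}^{\top}$: the $\times_3$ truncation to $k_3$ columns followed by $\times_3 \M{Z}^{\top}$ zeros out faces $k_3+1,\dots,n$ and acts as identity on the first $k_3$, while the mode-1 and mode-2 factors and their transposes combine into the two orthogonal projectors. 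Applying $\times_3 \MM^{-1}$ to both sides then yields the claim.

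The main obstacle I anticipate is bookkeeping the mode-3 unfolding conventions carefully enough to verify that $\TA \times_3 \M{Z}^{\top}$ really does produce, face by face, the mode-3 contractions against the columns of $\MZ$, and correspondingly that the $\times_3$ truncation in $\TA_{\bf k}$ matches the $\widehat{\TP}$ zero-padding—this is where the definitions in \Cref{sec:back} ($\TA_{(3)}$, $\times_3$, the reshape identities) must be invoked precisely rather than waved at. Everything else is a routine consequence of the fact that $\starM$ diagonalizes into independent slice-wise matrix products and that $\MQ,\MW$ have orthonormal columns so that $\MQ\MQ^\top,\MW\MW^\top$ are the orthogonal projectors appearing in the standard tr-HOSVD formula. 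A secondary point worth stating cleanly is why $\TQ,\TW$ as defined are genuinely $\starM$-unitary (or at least have orthonormal lateral slices), which follows immediately from $\widehat{\TQ}_{:,:,i}^{\top}\widehat{\TQ}_{:,:,i} = \MQ^\top\MQ = \M{I}$ on every face together with \Cref{th:invariance}; I would dispatch this in one sentence before the main computation.
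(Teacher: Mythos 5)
Your proposal is correct and follows essentially the same route as the paper's proof: both arguments pass to the transform domain, use the facewise action of $\starM$ together with the constant faces $\widehat{\TQ}_{:,:,i}=\MQ$, $\widehat{\TW}_{:,:,i}=\MW$ to identify $\widehat{\TC}_{:,:,i}=\MQ_{:,1:k_1}^{\top}\widehat{\TA}_{:,:,i}\MW_{:,1:k_2}$, and observe that the mode-3 truncation in the tr-HOSVD corresponds exactly to zeroing the transform-domain faces $k_3+1,\ldots,n$, which is what the $\starM$ product with $\TP$ accomplishes. The only difference is presentational (you verify the final identity face by face, while the paper first rewrites the core via mode-product manipulations), so there is nothing substantive to add.
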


\begin{proof}
First consider  
 \begin{eqnarray}
   \TC  & := & \TA \times_1 \MQ_{:,1:k_1}^\top \times_2 \MW_{:,1:k_2}^\top \times_3 \MZ^\top \\   \nonumber
          & = & (\TA \times_3 \MZ^\top) \times_1 \MQ_{:,1:k_1}^\top \times_2 \MW_{:,1:k_2}^{\top} \\  \nonumber
          & = & \widehat{\TA} \times_1 \MQ_{:,1:k_1}^{\top} \times_2 \MW_{:,1:k_2}^{\top},  \label{eq:hash}\end{eqnarray}
using properties of mode-wise products (see \cite{KoldaBader}).  
From the definitions of the mode-wise product the $i^{th}$ face of $\TC$ as defined via (\ref{eq:hash}) is $\MQ_{:,1:k_1}^{\top} \widehat{\TA}_{:,:,i} \MW_{:,1:k_2}$.   But this means that we can equivalently represent $\TC$ as
$\T{C} = \TQ_{:,1:k_1,:}^{\top} \starM \TA \starM \TW_{:,1:k_2,:}$.   

Now $\TB := \TQ_{:,1:k_1,:} \starM \TC \starM \TW_{:,1:k_2,:}^{\top}$ implies $\widehat{\TB}_{:,:,i} = \MQ_{:,1:k_1} \widehat{\TC}_{:,:,i} \MW_{:,1:k_2}^\top, i=1,\ldots,n$.   But since $\widehat{\TC}_{:,:,i} = \MQ_{:,k_1}^\top \widehat{\TA} \MW_{:,k_2}$ for $i=1,\ldots,n$, we only need to zero-out the last $k_3 + 1:n$ frontal slices of $\TB$ to get to $\TA_{\bf k}$, which we do by taking the $\starM$ product with $\TP$ on the right, and the proof is complete.     
\end{proof}

The value of the theorem is that we now can compare the theoretical results from tr-HOSVD to our truncated methods.   
\begin{theorem} Given the tr-HOSVD approximation $\TA_{\bf k}$, for $\kappa := \min(k_1, k_2)$, 
\[ \| \TA - \TA_\kappa \|_F \le \| \TA - \TA_{\bf k} \|_F \]
with equality only if $\TA_{\bf k} = \TA_{\kappa}$.    \end{theorem}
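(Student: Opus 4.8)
The plan is to show that the tr-HOSVD approximation $\TA_{\bf k}$ is itself a tensor of t-rank at most $\kappa$ (with respect to $\starM$ defined by $\MM = \M{Z}^{\top}$), and then invoke the tensor Eckart--Young theorem already proven above. First I would work in the transform domain. Using the representation established in the preceding theorem, $\widehat{\TA}_{\bf k}$ has frontal slices
\[ (\widehat{\TA}_{\bf k})_{:,:,i} = \MQ_{:,1:k_1}\bigl(\MQ_{:,1:k_1}^{\top}\widehat{\TA}_{:,:,i}\MW_{:,1:k_2}\bigr)\MW_{:,1:k_2}^{\top}, \qquad i = 1,\ldots,k_3, \]
and $(\widehat{\TA}_{\bf k})_{:,:,i} = 0$ for $i = k_3+1,\ldots,n$ (the effect of the $\starM$-product with $\TP$). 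The parenthesized factor is $k_1\times k_2$, so each frontal slice of $\widehat{\TA}_{\bf k}$ has rank at most $\min(k_1,k_2) = \kappa$; consequently each face factors as an $m\times\kappa$ times a $\kappa\times p$ matrix, and reassembling these factors and transforming back by $\times_3\MM^{-1}$ exhibits $\TA_{\bf k}$ as an element of $\Gamma = \{\TX\starM\TY : \TX\in\mathbb{C}^{m\times\kappa\times n},\ \TY\in\mathbb{C}^{\kappa\times p\times n}\}$, i.e.\ $\TA_{\bf k}$ has t-rank at most $\kappa$.

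Second, since $\MM = \M{Z}^{\top}$ is unitary, the tensor Eckart--Young theorem applies: $\TA_\kappa$ is the best Frobenius-norm approximation to $\TA$ over all of $\Gamma$. As $\TA_{\bf k}\in\Gamma$, this immediately yields $\|\TA - \TA_\kappa\|_F \le \|\TA - \TA_{\bf k}\|_F$, which is the inequality claimed.

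For the equality statement I would expand both discrepancies face-by-face in the transform domain (legitimate because $c=1$ here): $\|\TA - \TA_{\bf k}\|_F^2 = \sum_{i=1}^n \|\widehat{\TA}_{:,:,i} - (\widehat{\TA}_{\bf k})_{:,:,i}\|_F^2$. For $i\le k_3$, matrix Eckart--Young gives $\|\widehat{\TA}_{:,:,i} - (\widehat{\TA}_{\bf k})_{:,:,i}\|_F^2 \ge \sum_{j>\kappa}(\hat{\sigma}_j^{(i)})^2$, and for $i>k_3$ we have $\|\widehat{\TA}_{:,:,i}\|_F^2 = \sum_j (\hat{\sigma}_j^{(i)})^2 \ge \sum_{j>\kappa}(\hat{\sigma}_j^{(i)})^2$; summing reproduces $\|\TA - \TA_\kappa\|_F^2$. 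Equality in a sum of nonnegative terms forces equality in each term: for $i>k_3$ this forces $\hat{\sigma}_1^{(i)}=\cdots=\hat{\sigma}_\kappa^{(i)}=0$, hence $\widehat{\TA}_{:,:,i}=0$ and the $i$-th faces of $\widehat{\TA}_{\bf k}$, $\widehat{\TA}_\kappa$, and $\widehat{\TA}$ all coincide; for $i\le k_3$ it forces $(\widehat{\TA}_{\bf k})_{:,:,i}$ to be a best rank-$\le\kappa$ Frobenius approximation of $\widehat{\TA}_{:,:,i}$, which (up to the usual degeneracy from repeated singular values) means it equals $(\widehat{\TA}_\kappa)_{:,:,i}$. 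Transforming back gives $\TA_{\bf k} = \TA_\kappa$. I expect the equality case to be the only delicate point: the inequality is essentially a one-line consequence of membership in $\Gamma$, whereas pinning down ``equality $\Rightarrow \TA_{\bf k}=\TA_\kappa$'' requires the face-by-face squeeze argument above and a remark about uniqueness of the optimal per-face truncation.
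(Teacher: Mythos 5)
Correct, and essentially the paper's argument: you show that $\TA_{\bf k}$ has t-rank at most $\kappa$ (the paper reads this off the t-ranks of the $\starM$ factors $\TQ$, $\TC$, $\TW \starM \TP$, while you verify it face-by-face in the transform domain, which is equivalent) and then appeal to the Eckart--Young optimality of $\TA_\kappa$ over the set of t-rank-$\kappa$ tensors. Your face-by-face treatment of the equality clause is in fact more explicit than the paper's proof, which leaves that part implicit, and your caveat about non-uniqueness under repeated singular values is apt.
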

\begin{proof} Note that the t-ranks of $\TQ$ and  $\TW \starM \TP$ are $k_1$ and $k_2$ respectively.  Since   
${\TC}$ is $k_1 \times k_2 \times n$, its t-rank cannot exceed $\kappa :=\min(k_1,k_2)$.   As such, 
we know $\TA_{\bf k}$ can be written as a sum of $\kappa$ outer-products of tensors under $\starM$, and the result follows given the optimality of $\TA_\kappa$. \end{proof}  

The following now easily follows.  
\begin{corollary} \label{cor:comp} Given the tr-HOSVD approximation $\TA_{\bf k}$, for $\kappa := \min(k_1, k_2)$, there exists $\gamma$ such that $\TA_{\bfrho}$ returned by \Cref{alg:tsvdMII} has implicit rank less than or equal to $\TA_{\kappa}$ and 
\[ \| \TA - \TA_{\bfrho} \|_F \le \| \TA - \TA_{\kappa} \|_F \le \| \TA - \TA_{\bf k} \|_F .\] 
\end{corollary}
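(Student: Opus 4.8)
The plan is to obtain the corollary by composing two results already in hand: the immediately preceding theorem, which gives $\|\TA-\TA_\kappa\|_F\le\|\TA-\TA_{\bf k}\|_F$ for $\kappa=\min(k_1,k_2)$, and \Cref{thm:doubleopt}, which, starting from a given truncated t-SVDM approximation, produces an energy level at which the t-SVDMII output undercuts that approximation both in Frobenius error and in implicit rank. So the entire argument is a short chaining of inequalities once the hypotheses and the bookkeeping are lined up; there is no new estimate to establish.

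First I would verify that the setting of \Cref{thm:doubleopt} is in force. In the $\starM$ representation of the tr-HOSVD established above, the transform matrix is $\MM=\M{Z}^\top$ with $\M{Z}$ unitary, so $\MM$ is a (unit) multiple of a unitary matrix and the Eckart--Young theory of \Cref{sub:t-svd} applies. Next I would pin down the meaning of $\TA_\kappa$: by the tensor Eckart--Young theorem it is the t-SVDM truncation $\TU_{:,1:\kappa,:}\starM\TS_{1:\kappa,1:\kappa,:}\starM\TV_{:,1:\kappa,:}^{\rm H}$, i.e.\ precisely the object ``$\TA_k$'' of \Cref{thm:doubleopt} with $k=\kappa$, so its implicit rank $r=\sum_i\rho_i$ is well defined. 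Applying \Cref{thm:doubleopt} at truncation level $\kappa$ then yields a $\gamma\le\mu:=\|\TA_\kappa\|_F^2/\|\TA\|_F^2$ for which the t-SVDMII approximation $\TA_{\bfrho}$ returned by \Cref{alg:tsvdMII} at energy $\gamma$ has implicit rank at most that of $\TA_\kappa$ and satisfies $\|\TA-\TA_{\bfrho}\|_F\le\|\TA-\TA_\kappa\|_F$. Appending $\|\TA-\TA_\kappa\|_F\le\|\TA-\TA_{\bf k}\|_F$ from the preceding theorem produces the displayed chain.

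The only point needing care — and it is minor — is the identification performed in the previous paragraph: \Cref{thm:doubleopt} is phrased in terms of the explicit t-SVDM truncation and its implicit rank, whereas the preceding theorem and the corollary speak of ``$\TA_\kappa$'' as a best t-rank-$\kappa$ approximant. Checking, via the tensor Eckart--Young theorem, that these refer to literally the same tensor is what licenses transporting the implicit-rank conclusion of \Cref{thm:doubleopt} verbatim into the statement of the corollary. Beyond this, the proof is an immediate concatenation and I expect it to take only a couple of lines.
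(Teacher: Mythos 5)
Your proposal is correct and matches the paper's intent: the paper gives no explicit proof (it states the corollary "now easily follows"), and the intended argument is precisely your chaining of \Cref{thm:doubleopt} applied at truncation level $\kappa$ (under $\MM=\M{Z}^\top$, a unitary choice, so the Eckart--Young framework applies) with the preceding theorem's inequality $\|\TA-\TA_\kappa\|_F\le\|\TA-\TA_{\bf k}\|_F$. Your extra care in identifying $\TA_\kappa$ with the t-SVDM truncation used in \Cref{thm:doubleopt} is exactly the right bookkeeping.
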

Note this is independent of the choice of $k_3$.  Indeed, the size of the upper bound on the right will increase if $k_3 < n$.   

While this has theoretical value, it relies on the choice $\MM = \MZ^\top$.  This begs the question of whether or not $\MZ$ needs to be stored 
explicitly to render the approximation useful in practical applications.  When $\MZ$ is chosen to be a matrix that can be applied quickly without explicit storage, such as a discrete cosine transform, this is not a consideration.  We will say more about this at the end of the section.

\subsection{Approximation in CP Form}  \label{ssec:CP}


Let $\TA_\bfrho$ be given, and let $r$ be the implicit rank.  Define the $m \times r$ matrix $\tilde{\MU}$ by concatenation:
\[  \tilde{\MU} = [\widehat{\MU}_{:,1:\rho_1,1},\widehat{\MU}_{:,1:\rho_2,2}, \cdots,\widehat{\MU}_{:,1:\rho_n,n}] \]
and similarly for $\tilde{\MV}$.   Define $\tilde{\TS}$ to be $r \times r \times n$, and on the $i^{th}$ frontal slice, put the 
entries $\hat{\sigma}_1^{(i)},\ldots,\hat{\sigma}_{\rho_i}^{(i)}$ into diagonal entries numbered $(\sum_{j=1}^{i-1} \rho_i) +1$ to $\sum_{j=1}^{i} \rho_i$.   

Then it is easy to verify that each frontal slice of $\widehat{\TA_{\bfrho}}$ is given by the triple matrix product $\tilde{\MU} \tilde{\MS}_{:,:,i} \tilde{\MV}^{\rm H}$.  
In other words, if the $i^{th}$ row of a matrix $\widehat{\MW}$ contains the diagonal entries of $\widehat{\MS}_{:,:,i}$ starting in column $\sum_{j=1}^{i-1}\rho_i+1$, then
\[\widehat{\TA_{\bfrho}} = \llbracket \tilde{\MU}, \tilde{\MV}, \widehat{\MW} \rrbracket, \]
where all three matrices have $r$ columns.  But using $\tilde{\MW} := \MM^{-1} \widehat{\MW}$,  
\[ \TA_{\bfrho} = \widehat{\TA_{\bfrho}} \times_3 \MM^{-1} = \llbracket \tilde{\MU}, \tilde{\MV}, \tilde{\MW} \rrbracket. \]  

Because there are only $r$ non-zero entries in $\tilde{S}$, only one per tube fiber, each column of $\tilde{\MW}$ is 
$\hat{\sigma}^{(i)}_j (\MM^{-1})_{:,i}$.  If we assume that $\MM$ is an orthogonal matrix, then $\MM^{-1}_{:,i}$ has unit length.  Since we know that each column of $\tilde{\MU}, \tilde{\MV}$ also has unit length, then we know exactly that 
\[ \TA_{\bfrho} = \sum_{k=1}^r \lambda_k \TC_k, \mbox{ where } \TC_k \mbox{ is a rank-1 with $\|\TC_k \|_F=1$} \] 
 and each $\lambda_k = \hat{\sigma}_{j}^{(i)}$ for some $(i,j)$.   We can order the $\lambda$'s so that these are in decreasing order.  Thus, our representation gives a CP formulation of tensor rank at most $r$, and because we have an expression for $\| \TA - \TA_{\bfrho} \|_F^2$ in the theorem, we can chose to truncate this expression further and know precisely the entailed error in
doing so.        

\subsection{Discussion} \label{ssec:newsto} The result tells something important beyond just linking the t-SVDMII to a CP format. The normalized columns of $\tilde{\MW}$ are all multiples of some column of $\tilde{\MM}^{-1}$. Suppose
$\MM = \MZ^\top$.  Then $(\MM^{-1})_{:,i} = \MZ_{:,i}$.  Suppose we fix $\gamma$ and compute our t-SVDMII approximation.  If any $\rho_i = 0$, then
this result means that the $i$th column of $\MZ_{:,i}$ does not appear in $\tilde{\MW}$.  
Implicit storage of $\tilde{\MW}$ requires only those columns of $\MZ$ that appear, and an (integer-valued) array of pointers of length $r$.  Thus, it may be 
possible to use $\MM = \MZ^T$ in practice:  if for our fixed $\gamma$, many $\rho_i = 0$, and/or $n$ is sufficiently small, moving into the transform domain can be accomplished with multiplication by $(\MZ_{\mathcal{J}})^T$, where $\mathcal{J}$ denotes the column indicies for which $\rho_i$ were non-zero.

\section{Multi-sided Tensor Compression}   \label{sec:new}

Given $\TA \in \mathbb{C}^{m \times p \times n}$, we view $\TA$ as an $m \times p$ matrix, with tube-fiber entries of length $n$.   The elemental operation is the $\starM$ operation on the tube-fibers which are then length $n$, and so $\starM$ must be length $n$.

When a data element $\MD_i$ is viewed as an $m \times n$ matrix, and placed into the tensor $\TA$ as a lateral slice, the resulting tensor would have fibers of length $n$. However, in some applications, 
there may be no reason to prefer one spatial ordering over another, but a different ordering can change the size of the third mode, so we need to consider how to treat this case.   

Consider the mapping $\mathbb{C}^{m \times p \times n} \rightarrow \mathbb{C}^{n \times p \times m}$ induced by matrix-transposing (without conjugation) each of the $p$ lateral slices.  
In {\sc Matlab}, this would be obtained by using the command ${\tt permute}(\TA,[3,2,1])$.    To keep the notation succinct, we will use a superscript of $\rm P$ to denote a tensor that has been permuted in this way.   So, 
$$\TA^{\tp} = \mbox{\tt permute}(\TA,[3,2,1]), \qquad (\TA^{\tp})^{\tp} = \TA.$$ 

In this section, we define new techniques for compression that involve both possible orientations of the lateral slices in order to ensure a more balanced approach to the compression of the data.  We use $\starB$ for a tensor-product to operate on the permuted tensors, where $\MB$ is an $m \times m$ non-zero multiple of a unitary matrix. 


 \subsection{Optimal Convex Combinations}
The first option we consider is optimal t-SVDM compression over both orientations. 
      We find the t-SVDM's of both $\TA, \TA^{\tp}$:  $\TA = \TU \starM \TS \starM \TV^{\top} $ and $\TA^{\tp} = \TW \starB \TD \starB \TQ^{\top}$, compress each, and form 
                      \[ \alpha( \TU_{k_1} \starM \TS_{k_1} \starM \TV_{k_1}^{\top}) + 
                      (1-\alpha)*(\TW_{k_2} \starB \TD_{k_2} \starB \TQ_{k_2}^{\top})^{\tp} .\]
                      
   Observe that 
   \[ \mbox{unfold}(\TA^{\tp}) = \M{P} \MA \]
   where $\MA = \mbox{unfold}(\TA)$ as before, and $\M{P}$ denotes a stride permutation matrix.  Since $\M{P}$ is orthogonal, this means that the singular values and right singular vectors of $\MA$ are the same as those of $\M{P} \MA$, and the left singular vectors are row permuted by $\M{P}$.   
   Thus, from our theorem, for a truncation parameter $r$,
   \[ \| \TA^{\tp} - (\TA^{\tp})_r \|_F \le \| \MA - \MA_r \|_F. \]                      
  It follows that
  \begin{eqnarray*}
   \| \TA - \left(\alpha \TA_{k_1} + (1-\alpha) (\TA^{\tp}_{k_2})^{\tp} \right) \| & \le & 
          \| \alpha(\TA - \TA_{k_1}) + (1-\alpha) (\TA - (\TA^{\tp}_{k_2})^{\tp}) \|_F \\ 
          &  \le & 
    \alpha \| \MA - \MA_{k_1} \|_F + (1-\alpha) \| \MA - \MA_{k_2} \|_F \\
          & \le &  \| \MA- \MA_{\min (k_1,k_2)} \|_F . \end{eqnarray*}
  

We can also use the optimal t-SVDMII approximations in convex combination,  
\begin{equation} \label{eq:convexII} \TA_{\bfrho,\bfdel} = \alpha \TA_{\bfrho} + (1-\alpha) (\TA^{\tp}_{\bfdel})^{\tp}, \end{equation}
where $\bfrho, \bfdel$ are the multi-indicies for each orientation, respectively, which may have been determined with
different energy levels.    From \Cref{thm:tsvdMII}, it follows
 \begin{equation} \label{eq:errII} \| \TA - \TA_{\bfrho,\bfdel} \|_F^2 = \alpha^2 \sum_{i=1}^{n} \sum_{j=\rho_i+1}^{r_i}  (\hat{\sigma}_{j}^{(i)})^2
+ (1-\alpha)\sum_{k=1}^{m} \sum_{j=\delta_k+1}^{\tilde{r}_k} (\tilde{\sigma}_{j}^{(k)})^2 ,\end{equation}
where $r_i$ is the rank of $\widehat{\TA}_{:,:,i}, i=1:n$ under $\starM$, $\tilde{r}_k$ is the rank of 
$\widehat{\TA^{\tp}}_{:,:,k}$ under $\starB$ and $\tilde{\sigma}_j^{(k)}$ are for $\widehat{\TA^{\tp}}$ under $\starB$ as well.   
\subsection{Sequential Compression}

Suppose $\TA = \TU \starM \TS \starM \TV^{\top}$ is $m \times p \times n$.  
 Here, as earlier, a subscript on the tensor refers to the number of lateral slices of the tensor that are kept.   For example, $\TU_k$ will denote $\TU_{:,1:k,:}$.   Our new approximation is obtained from the following algorithm.
\begin{algorithm}[h] 
\caption{\label{Alg:Sequential} Sequential t-SVDMB}
\begin{algorithmic}[1]
\STATE INPUT:  $\TA$ of size $m \times p \times n$; truncation parameters $k \le \min(m,p), q \le \min(m,p)$ \nonumber  \\
\STATE INPUT:  $n \times n$ $\MM$ and $k \times k$ $\MB$ that are multiples of unitary matrices. \\
\STATE OUTPUT:  $q \times p \times k$ core $\TG$, left singular tensors $\TU_k$ of $m \times k \times n$ and $\TW_q$ of size $k \times q \times m$ \\ 
\STATE Compute $\TA \approx \TU_k \starM \TS_k \starM \TV_k^{\rm H} $ \\
\STATE Define $k \times p \times n$, $\TC :=  \TU_k^{\top} \starM \TA = \TS_k \starM \TV_k^{\rm H} . $ \\ 
\STATE Find the $q$-term truncated t-SVD of the $n \times p \times k$ tensor $\TC^{\tp}$ under $\starB$:\\
 \[ \TC^{\tp} \approx \TW_{q} \starB \TD_{q} \starB \T{Q}_{q}^{\rm H} . \]
\STATE Use this t-SVD to compress further with a $q$-term projection: \\
 \[  \TG := \TW_q^{\top} \starB \TC^{\tp} = \TD_{q} \starB \T{Q}_q^{\rm H} .\]
 \end{algorithmic}
\end{algorithm} 

 The approximation to $\TA$ is generated via its implicit representation via the triple $(\TG, \TW_q, \TU_k)$ and 
operator pair $(\starB,\starM)$ at a storage cost of $qpk + mkn+kqn$.   Though never explicitly formed, the approximation is 
 \begin{equation} \label{eq:ty}  \TA_{k,q} :=  \TU_k \starM (\TW_q \starB \TG)^{\tp} = \TU_k \starM \left( \TW_q \starB \TW_q^{\rm H} \starB (\TU_k ^{\rm H} \starM \TA)^{\tp} \right)^{\tp} .\end{equation}

 In addition, since the compressed representations are optimal at each stage under their respective algebraic settings, the resulting approximation {\it is locally optimal}:
\begin{theorem} The approximation $\TA_{k,q}$ in (\ref{eq:ty}) above is the best approximation\footnote{Here, we mean it is the minimizer in the Frobenius norm.} that can be expressed in the form
 \[  \sum_{i=1}^{q} \TU_{:,i,:} \starM \left( \sum_{j=1}^{k} \TX_{:,j,:} \starB \T{H}_{j,:,:} \right)^{\tp} \]
in the Frobenius norm, where the lateral slices $\TU_{:,i,:}$ are from the the t-SVDM of $\TA$ and $\TX, \T{H}$ denote any 
$n \times q \times k$ and $q \times p \times k$ tensors, respectively.   
  \end{theorem}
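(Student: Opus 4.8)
The plan is to reduce the two-layer minimization to two results already in hand: the $\starM$-unitary invariance of the Frobenius norm (\Cref{th:invariance}), used to strip off the fixed outer factor $\TU_k$, and the Eckart--Young theorem for tensors, applied with $\MB$ (a non-zero multiple of a unitary matrix) in place of $\MM$, to resolve the inner t-rank-$q$ factorization $\TX\starB\THHH$. Throughout I would write a generic admissible candidate compactly as $\TU_k\starM\TY$, where $\TY := (\TX\starB\THHH)^{\tp}$ is $k\times p\times n$ and, as $\TX,\THHH$ range over $\mathbb{C}^{n\times q\times k}$ and $\mathbb{C}^{q\times p\times k}$, $\TY$ ranges over exactly the $(\cdot)^{\tp}$-images of the t-rank-$\le q$ tensors under $\starB$ of size $n\times p\times k$.

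First I would complete $\TU_k = \TU_{:,1:k,:}$ to the full $m\times m\times n$ $\starM$-unitary tensor $\TU$ from the t-SVDM of $\TA$ (extending the orthonormal lateral slices if only a reduced t-SVDM is at hand), writing $\TU = [\,\TU_k\ \ \TU_k^{\perp}\,]$ with $\TU_k^{\perp} := \TU_{:,k+1:m,:}$. Using $\TU^{\rm H}\starM\TU = \TI$ and \Cref{th:invariance},
\[ \|\TA - \TU_k\starM\TY\|_F^2 = \|\TU^{\rm H}\starM\TA - \TU^{\rm H}\starM\TU_k\starM\TY\|_F^2 . \]
Since $\TU^{\rm H}\starM\TU_k$ is the first $k$ lateral slices of the identity tensor, the subtracted term modifies only the top $k$ horizontal slices, so (working in the transform domain, where each $\widehat{\TU}_{:,:,i}$ is unitary and the block structure is immediate) the right-hand side splits with no cross terms as
\[ \|\TC - \TY\|_F^2 + \|(\TU_k^{\perp})^{\rm H}\starM\TA\|_F^2 , \qquad \TC := \TU_k^{\rm H}\starM\TA . \]
The second summand involves neither $\TX$ nor $\THHH$, so the minimization collapses to $\min_{\TX,\THHH}\|\TC - (\TX\starB\THHH)^{\tp}\|_F$.

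Next, since $(\cdot)^{\tp}$ is a permutation and hence a Frobenius isometry, this equals $\min\|\TC^{\tp} - \TX\starB\THHH\|_F$ over $\TX\in\mathbb{C}^{n\times q\times k}$ and $\THHH\in\mathbb{C}^{q\times p\times k}$, i.e.\ over all t-rank-$\le q$ tensors under $\starB$ of the size of $\TC^{\tp}$. The Eckart--Young theorem then identifies the minimizer as the $q$-term truncated $\starB$-t-SVD of $\TC^{\tp}$, which by construction in \Cref{Alg:Sequential} is $\TW_q\starB\TD_q\starB\TQ_q^{\rm H} = \TW_q\starB\TG$ (its final projection step gives $\TG = \TD_q\starB\TQ_q^{\rm H}$). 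Undoing the two reductions, the optimal admissible candidate is $\TU_k\starM(\TW_q\starB\TG)^{\tp} = \TA_{k,q}$, which is the desired conclusion.

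I expect the one genuinely delicate point to be the first step — confirming that left $\starM$-multiplication by the partial-isometry factor $\TU_k$ yields exactly the Pythagorean split above, namely that $\|(\TU_k^{\perp})^{\rm H}\starM\TA\|_F^2$ is constant across all admissible candidates and that the top-$k$ and bottom-$(m-k)$ blocks do not interact. Everything downstream is a direct invocation of the Eckart--Young theorem for $\starB$ together with the definitional identity $\TG = \TD_q\starB\TQ_q^{\rm H}$. I would close with the remark that the optimality asserted is only \emph{local}, in that the outer lateral slices are pinned to the (fixed) t-SVDM slices of $\TA$ rather than being optimized jointly with the inner factors — precisely the hypothesis of the theorem.
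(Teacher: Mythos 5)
Your proof is correct, and it is the natural formalization of the argument the paper only sketches in the sentence preceding the theorem (optimality "at each stage"): strip off the fixed factor $\TU_k$ via the $\starM$-unitary invariance of \Cref{th:invariance} to get the Pythagorean split $\|\TC-\TY\|_F^2+\|(\TU_k^{\perp})^{\rm H}\starM\TA\|_F^2$, then apply the tensor Eckart--Young theorem under $\starB$ to the permuted inner problem, recovering $\TW_q\starB\TG$ from \Cref{Alg:Sequential}. The paper itself gives no further proof, so your write-up supplies exactly the missing details and matches its intended reasoning.
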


\begin{figure}
\centering
\includegraphics[scale=0.85]{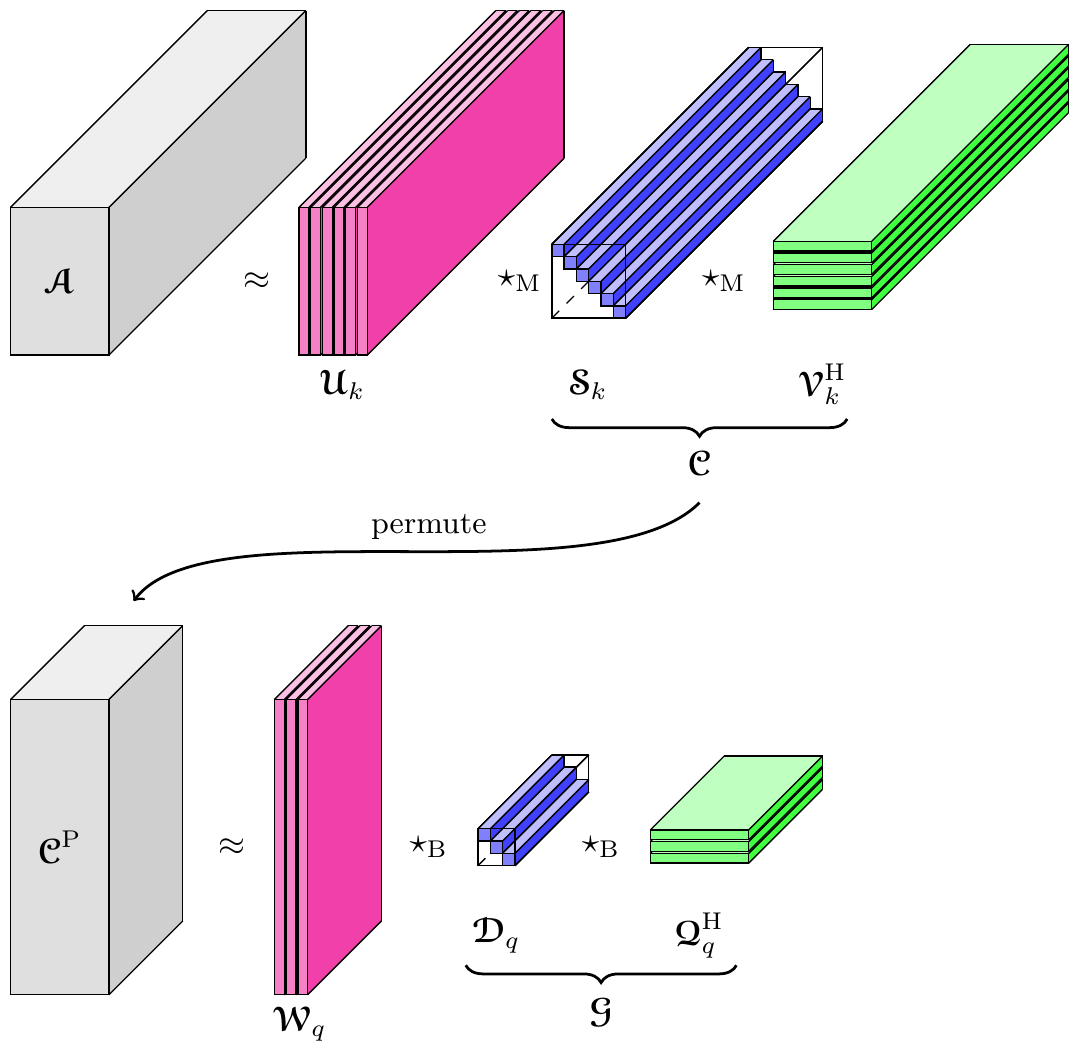}
\caption{\label{fig:multiside} Illustration of the impact of sequential t-SVDM compression \Cref{Alg:Sequential}.}
\end{figure}

Although in this approach we do make use of information on both orientations, the outcome does depend on which orientation is considered first. We can of course apply \Cref{Alg:Sequential} to both $\TA$ and $\TA^{\tp}$, and use a convex combination as the result.   The implicit storage cost is equivalent to storing triples for both approximations.  Assuming the same pair of truncation indicies $(k,q)$ for both orientations,  the total storage for the implicit representation is $2kqp + 2mnk + (n+m)kq$.   Of course, modifications can be made to use different truncation pairs for each sub-problem.   
 
In theory, we can also devise a sequential t-SVDMII if careful consideration is given to keeping computations within transform domains.  However, in our experience, the t-SVDMII is highly compressible as is, and greater benefit is derived from either fixing one orientation, or using a convex combination of both.  Therefore we do not address this further here.

\section{Numerical Examples}   \label{sec:numerical}
                 
In the following discussion, the {\it compression ratio} is defined as the number of floating point numbers needed to store the uncompressed data divided by the number of floating point numbers needed to store the compressed representation (in its implicit form).  Thus the larger the ratio, the better the compression.  The {\it relative error} is the ratio of the Frobenius-norm difference between the original data and the approximation over the Frobenius-norm of the original data. 

\subsection{Compression of YaleB data}

In this section, we show the power of compression for the t-SVDMII approach with appropriate choice of $\MM$:  that is, $\MM$ exploits structure inherent in the data.   We create a third-order tensor from the Extended Yale B face database \cite{yale} by putting the training images in as lateral slices in the tensor.   Then, we apply Algorithm 3, varying $\bfrho$, for 3 different choices of $\MM$:  we choose $\MM$ as a random orthogonal matrix, we use $\MM$ as an orthogonal wavelet matrix, and we use $\MM$ as the unnormalized DCT matrix.   We have chosen to use a random orthogonal matrix in this experiment to show that the compression power is relative to structure that is induced through the choice of $\MM$, so we do not expect, nor do we observe, value in choosing $\MM$ to be random.  In \Cref{fig:tsvdIIcomp}, we plot the compression ratio  against the relative error in the approximation.   We observe that for relative error on the order of 10 to 15 percent, the margins in compression achieved by the t-SVDMII for both the DCT and the Wavelet transform vs. treating the data in either matrix form, or in choosing a transform that - like the matrix case -- does not exploit structure in the data, is quite large.  

\begin{figure}
\begin{center}
\includegraphics[scale=.6]{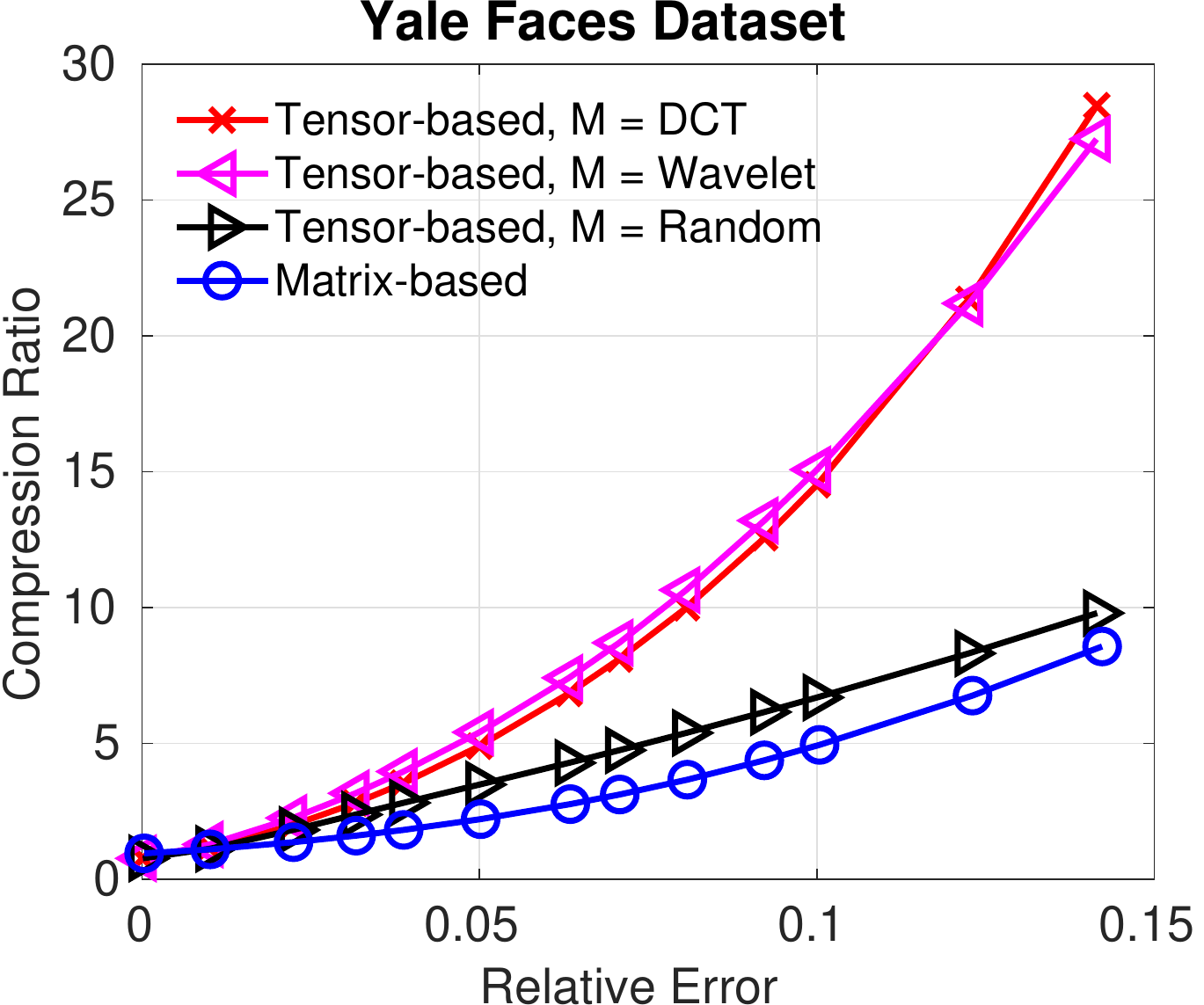}
\caption{\label{fig:tsvdIIcomp}  Illustration of the compressive power of the t-SVDIIM in Algorithm 3 for appropriate choices of $M$. Vertical axis is the inverse of the compression ratio, illustrating that, depending on desired relative error, far more compression is achieved when using either the DCT or wavelet transform to define $\starM$.  This indicates $\MM$ should be chosen to capitalize on structural features in the multiway data, otherwise performance is nearly equivalent to ignoring the multiway structure, as the matrix case does.}
\end{center}
\end{figure}

\subsection{Video Frame Data}

For this experiment, we use video data available in {\sc Matlab}.   The video consists of 120, $120 \times 160$ frames in gray scale.  The camera is positioned near one spot in the road, and cars travel on that road, (more or less from the top to the bottom as the frames progress) so the only changes per frame are cars entering and disappearing.  

We  compare the performance of our truncated t-SVDMII, for $\MM$ being the DCT matrix, against
the truncated matrix and truncated HOSVD approximations.  
We orient the frames as transposed lateral slices in the tensor to try to take advantage of some nearly shift invariant properties of the data\footnote{We could have oriented this with frames into the page as well.  The performance did not vary significantly.}.  Note that in this way, each column of the unfolded data corresponds to a single transposed frame, unfolded (or, equivalently, each column is obtained by unwrapping each frame by rows).   Thus, $\TA$ is $160 \times 120 \times 120$.     

With both the truncated t-SVDMII and the truncated matrix SVD approaches, we can truncate based on the same energy value.  Thus, we get relative error in our respective approximations with about the same value, and then we can compare the relative compression of the truncated t-SVDMII vs. truncated matrix SVD.  On the other hand, we can fix our energy value and compute our truncated t-SVDM2, and find its compression ratio.  Then, we can compute the truncated matrix SVD approximation with similar compression ratio, and compare its relative error to the tensor-based approximation. We will give some results for each of these two ways of comparison.

For truncating the HOSVD to $(k_1,k_2,k_3)$ terms, there are many ways of choosing the truncation 3-tuple. %
Trying to chose a 3-tuple
that has a comparable relative approximation to our approach would be cumbersome and would still leave ambiguities in our selection process.  Thus, rather than looping over all possible choices of the 3-tuple
to find an approximation with relative error closest to our methods, we use two truncation methods and indicies that give us an approximation best matching the compression ratios for our tensor approximation.    In order to make a fair comparison, the indicies are chosen as follows:  1) we compress only on the second mode (i.e. change $k_2$, but fix $k_1=m, k_3 = n$)  2) choose 
the truncation parameters on dimensions such that the mode-wise compression to dimension ratios are about the same.  The second option amounts to looping over $k_2 = 1,\ldots,p$, setting $k_1 = floor( \frac{k_2 m}{n} )$, and $k_3 = k_2$.  In this way, it is possible to compute the compression ratio for the tr-HOSVD based on the dimension in advance to find
the closest match to the desired compression levels.  
The results are given in \Cref{tab:vidresults}.  

\begin{table}
\centering
\begin{tabular}{|c|c|c|c|c|c|} \hline    
 $\gamma_1 = .998$ & t-SVDMII & Matrix on $\gamma_1$ & Mtx on CR & ($m,k_2,n$) & $(k_1,k_2,k_2)$ \\ \hline
CR & 4.76 & 1.83 & 4.76 & 4.95 & 4.90 \\ \hline
RE             & 0.044 & 0.045 & 0.093 & 0.098 & 0.065       \\ \hline \hline
$\gamma_2=.996$ & t-SVDMII & Matrix on $\gamma$ & Mtx on CR & ($m,k,n$) & 
$(k_1,k_2,k_2)$ \\ \hline
CR & 10.10 & 2.54 & 10.87 & 10.75 & 10.42  \\ \hline
RE             & 0.063 & 0.064 & 0.120 & 0.125 & 0.090   \\ \hline 
\end{tabular}
\label{tab:vidresults}
\caption{ Results from video experiments.  CR stands for Compression Ratio and RE stands for Relative Error.  Matrix based compression can be determined via predefined relative energy tolerance $\gamma$ (i.e. RE), or set to achieve desired compression, so we performed both.  For the first experiment with $\gamma_1$, the $k_2$ and 
$(k_1,k_2,k_3)$ values that gave the same compression results were 25 and (92,69,69), respectively; for the second experiment these were
 11 and (70, 53,53), respectively.  For completeness: the truncation values for the matrix case corresponding to the approximations chosen as described were 65 and 25 for the $\gamma_1$ experiment and 47 and 11 for the second experiment.  The total number of $\hat{\sigma}_i^{(j)}$ kept in our method for the first experiment was 1729 and 818 in the second.} 

\end{table}

Not only are the actual values in the table relevant to prove the truncated t-SVDMII gives superior quality (in RE) results to both when compression is fixed, but we can actually visualize the impact of the compression across the various approaches.  
In \Cref{fig:car1} and \Cref{fig:car2}, we give the corresponding reconstructed representations of frame 10 and 54 for the 4 methods
under comparable compression ratio
for the second $\gamma$ (columns 2,4-6, second row-block of the table, ie. the results corresponding to the most compression).  You can see that cars disappear altogether and/or artifacts make it appear as though cars may be in the frame when they are not -- that is, at these compression levels, the matrix and tr-HOSVD all suffer from a ghosting effect.   We note that in some frames (not pictured here), the truncated matrix and HOSVD are somewhat sharper than ours, but our approach does not suffer from this ghosting in any frame (it is always clear where the cars are and where they are not).

\begin{figure}[h]
\centering
\subfigure[Original]{\label{fig:1a}
\includegraphics[height=1.60in,width=1.60in]{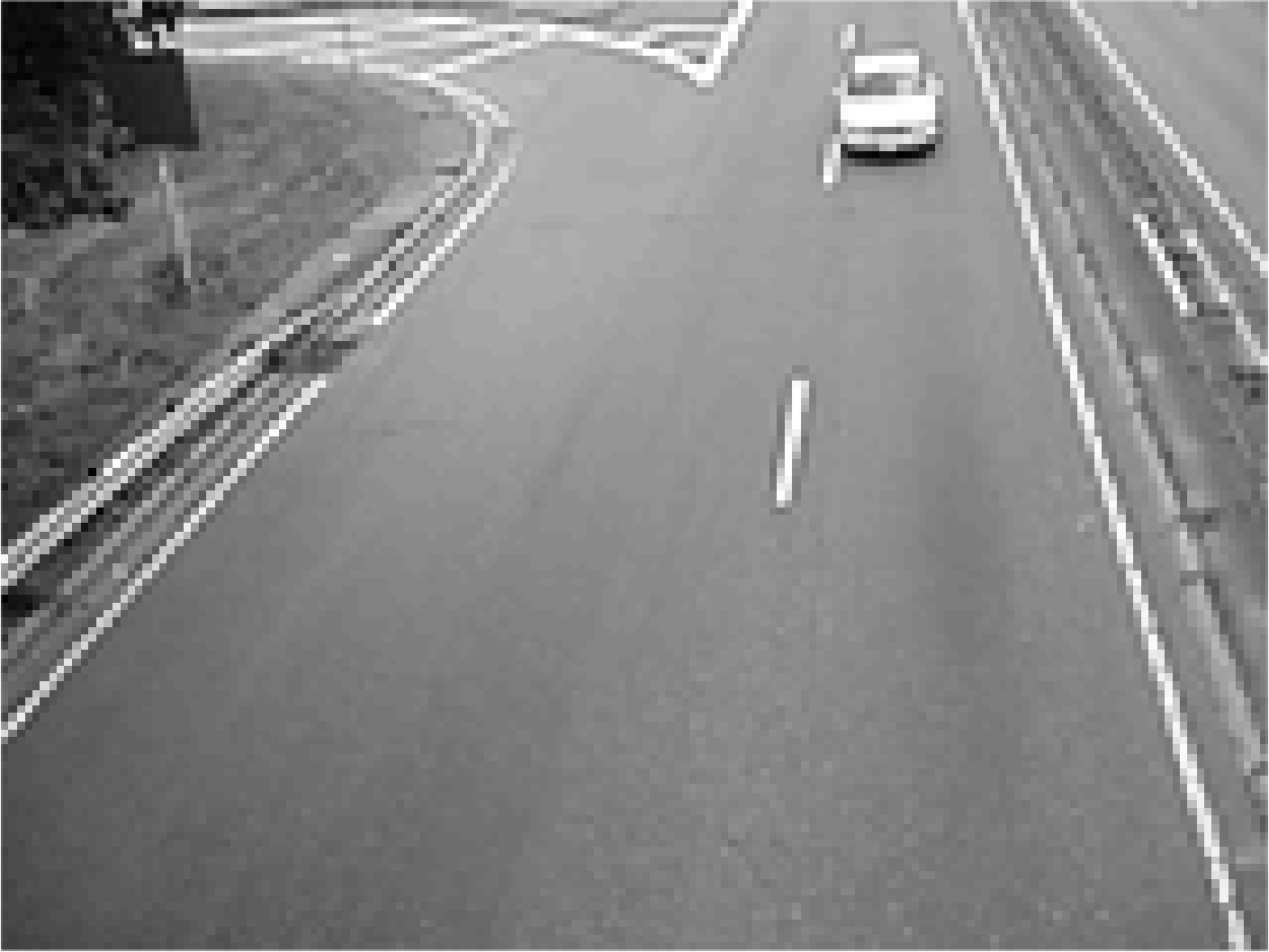}}
\subfigure[tr-tSVDMII]{\label{fig:1b} \includegraphics[height=1.60in,width=1.60in]{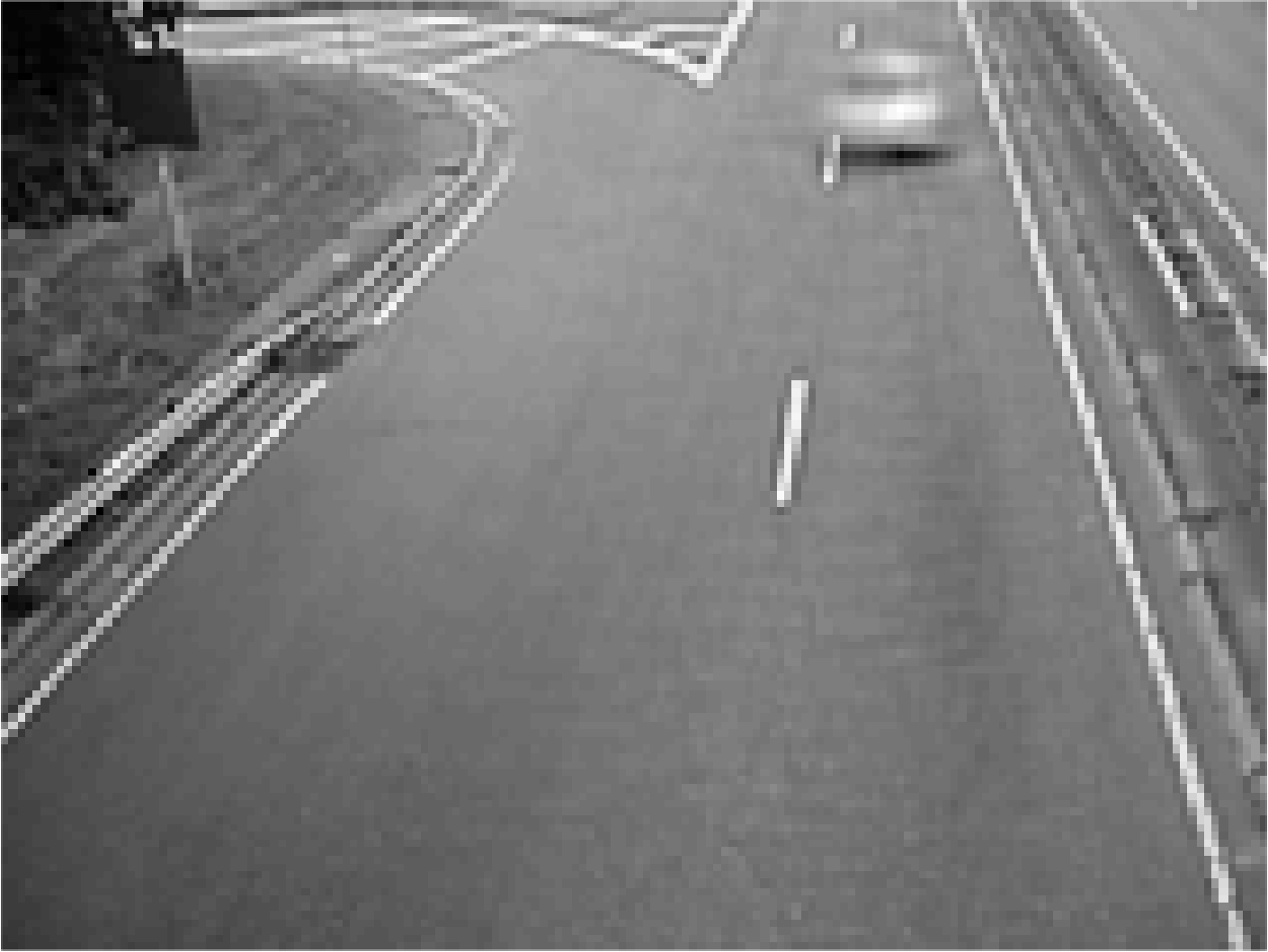}}
\subfigure[tr-Matrix]{\label{fig:1c} \includegraphics[height=1.60in,width=1.60in] {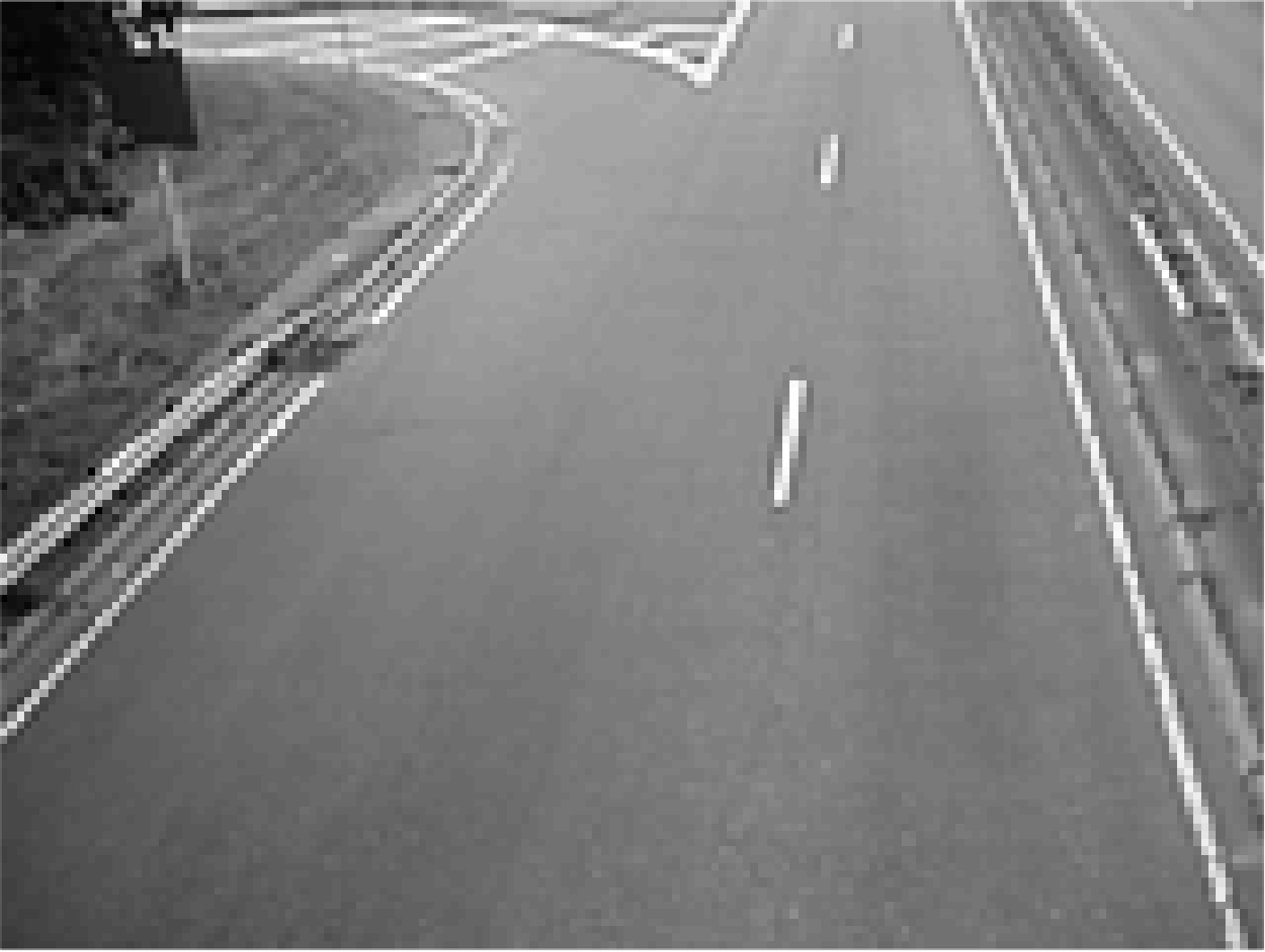}}
\subfigure[tr-HOSVD$(m,25,n)$]{\label{fig:1d} \includegraphics[height=1.60in,width=1.60in]{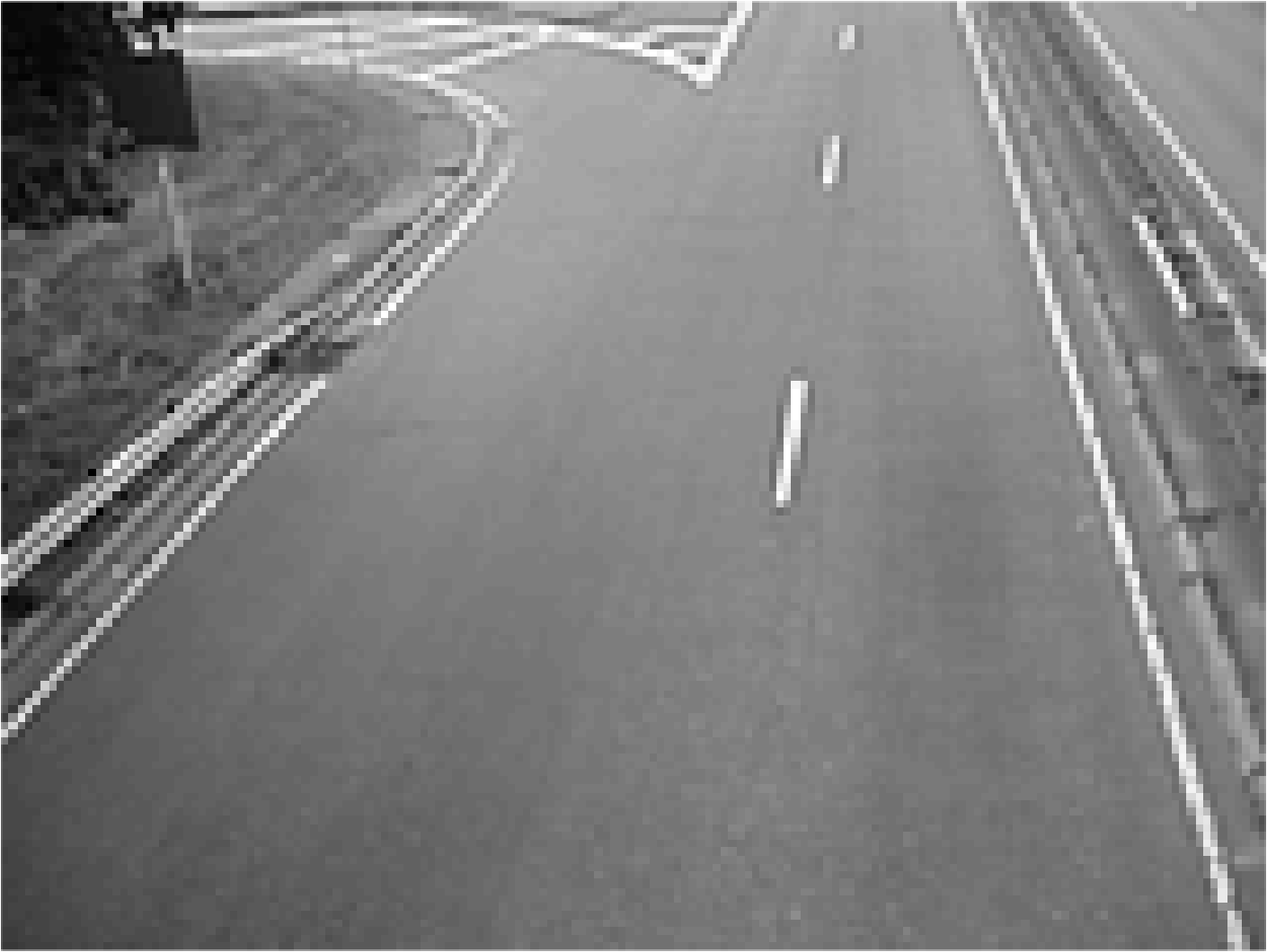}}
\subfigure[tr-HOSVD$(70,53,53)$]{\label{fig:1e} \includegraphics[height=1.60in,width=1.60in]{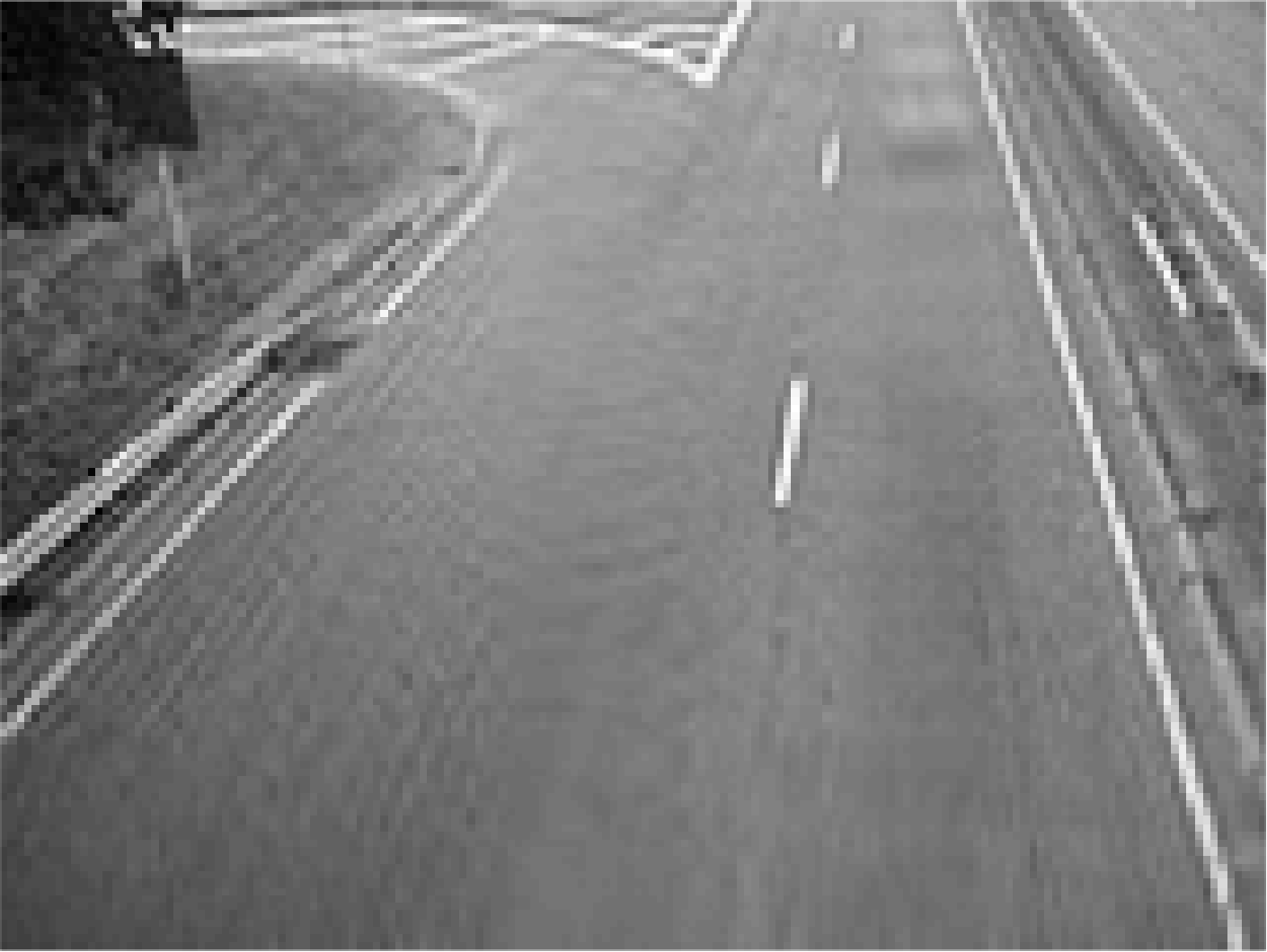}}
\caption{Frame 10, various reconstructions as indicated. }
\label{fig:car1}
\end{figure}

\begin{figure}[h]
\centering
\subfigure[Original]{\label{fig:2a} \includegraphics[height=1.60in,width=1.60in]{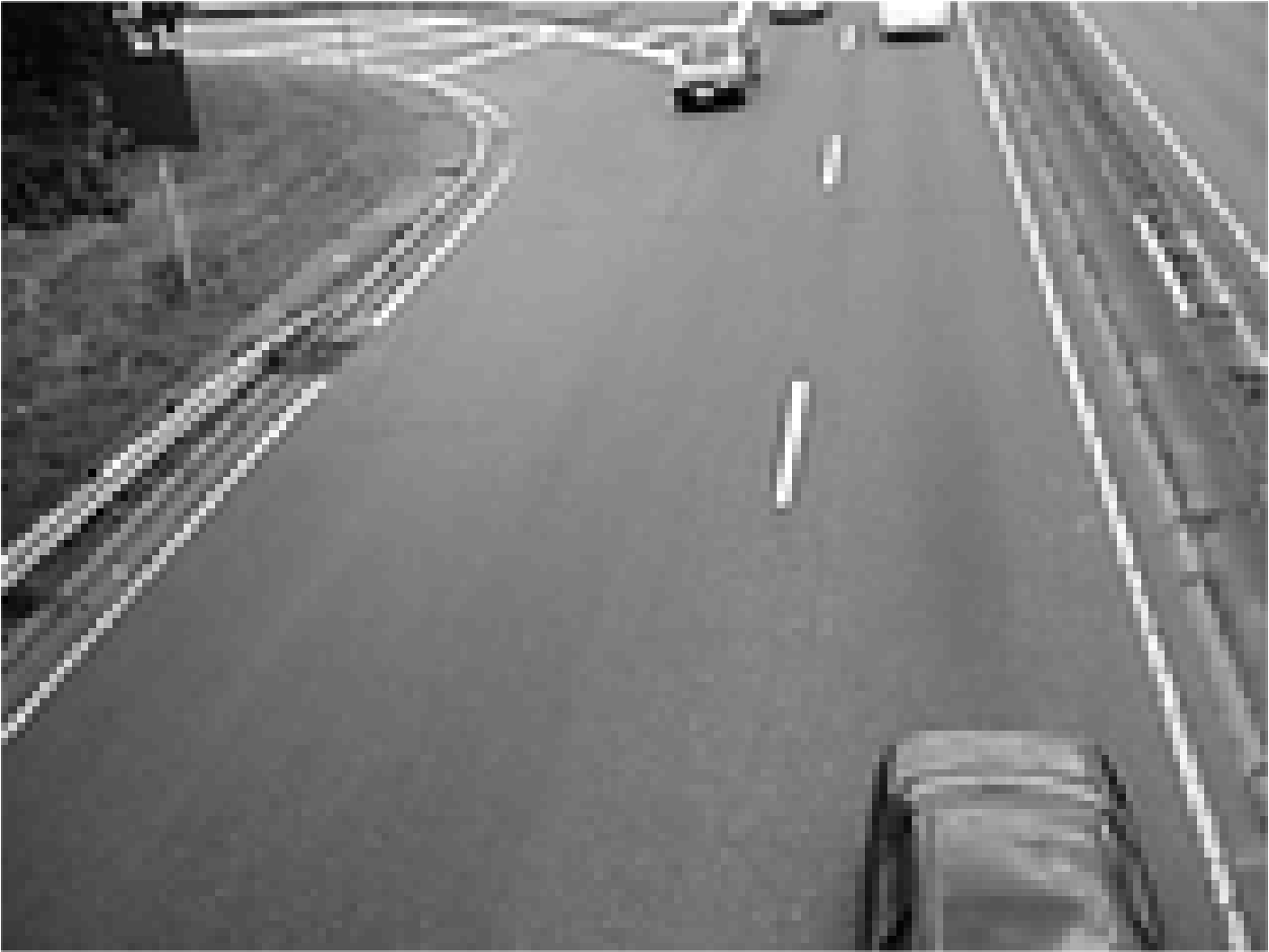}}
\subfigure[tr-tSVDMII]{\label{fig:2b} \includegraphics[height=1.60in,width=1.60in]{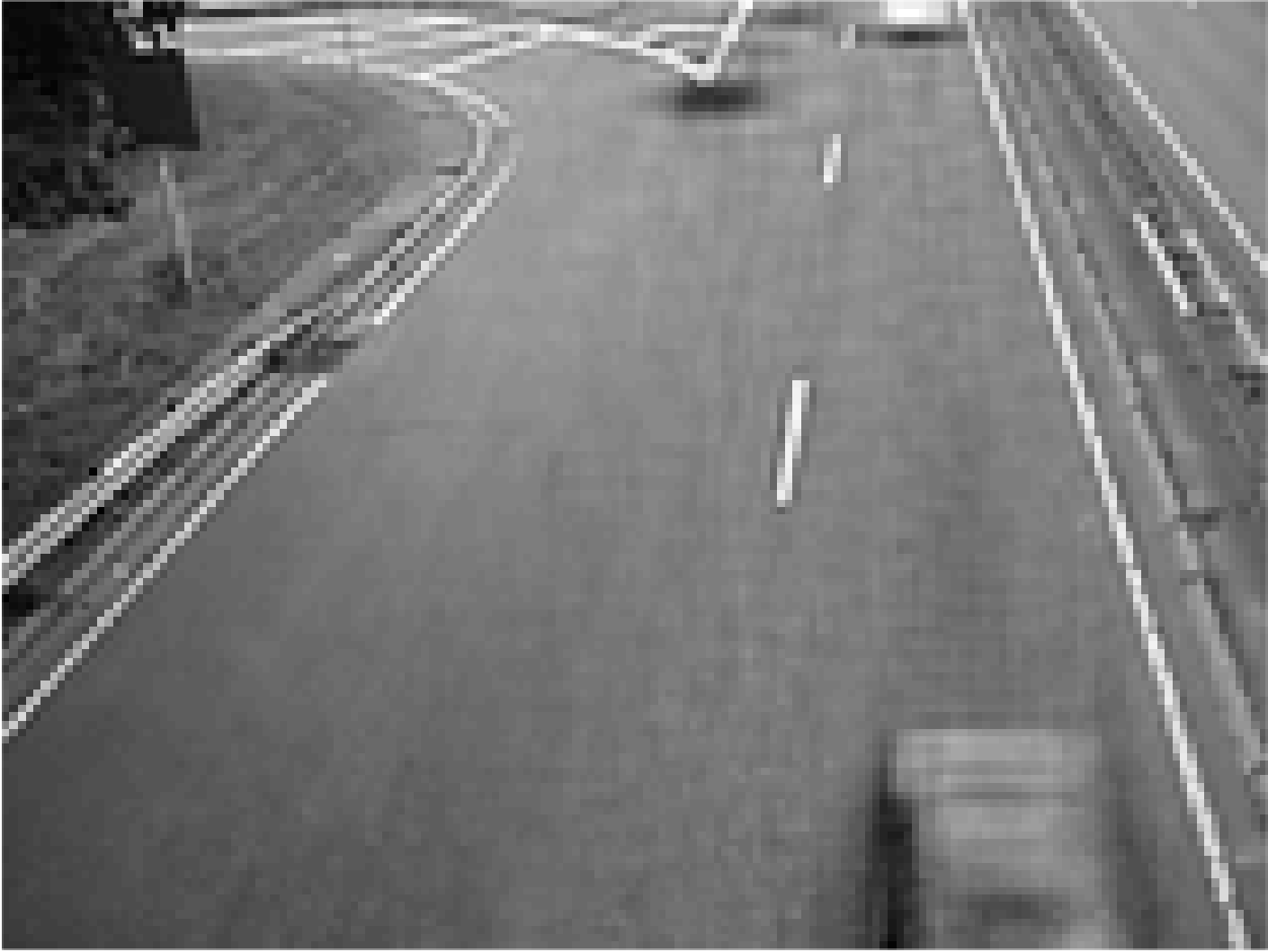}}
\subfigure[tr-Matrix]{\label{fig:2c} \includegraphics[height=1.60in,width=1.60in] {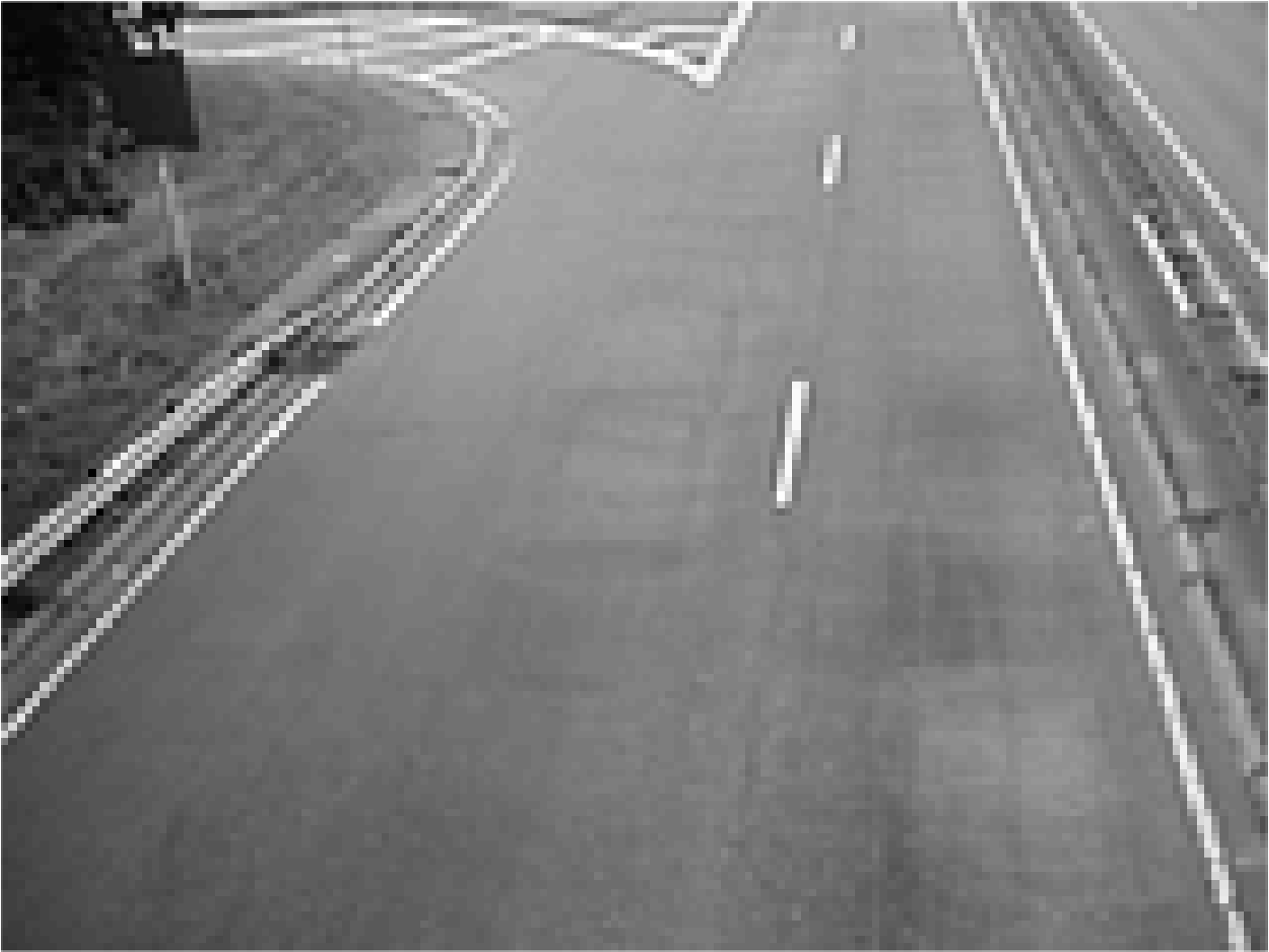}}
\subfigure[tr-HOSVD$(m,25,n)$]{\label{fig:2d} \includegraphics[height=1.60in,width=1.60in]{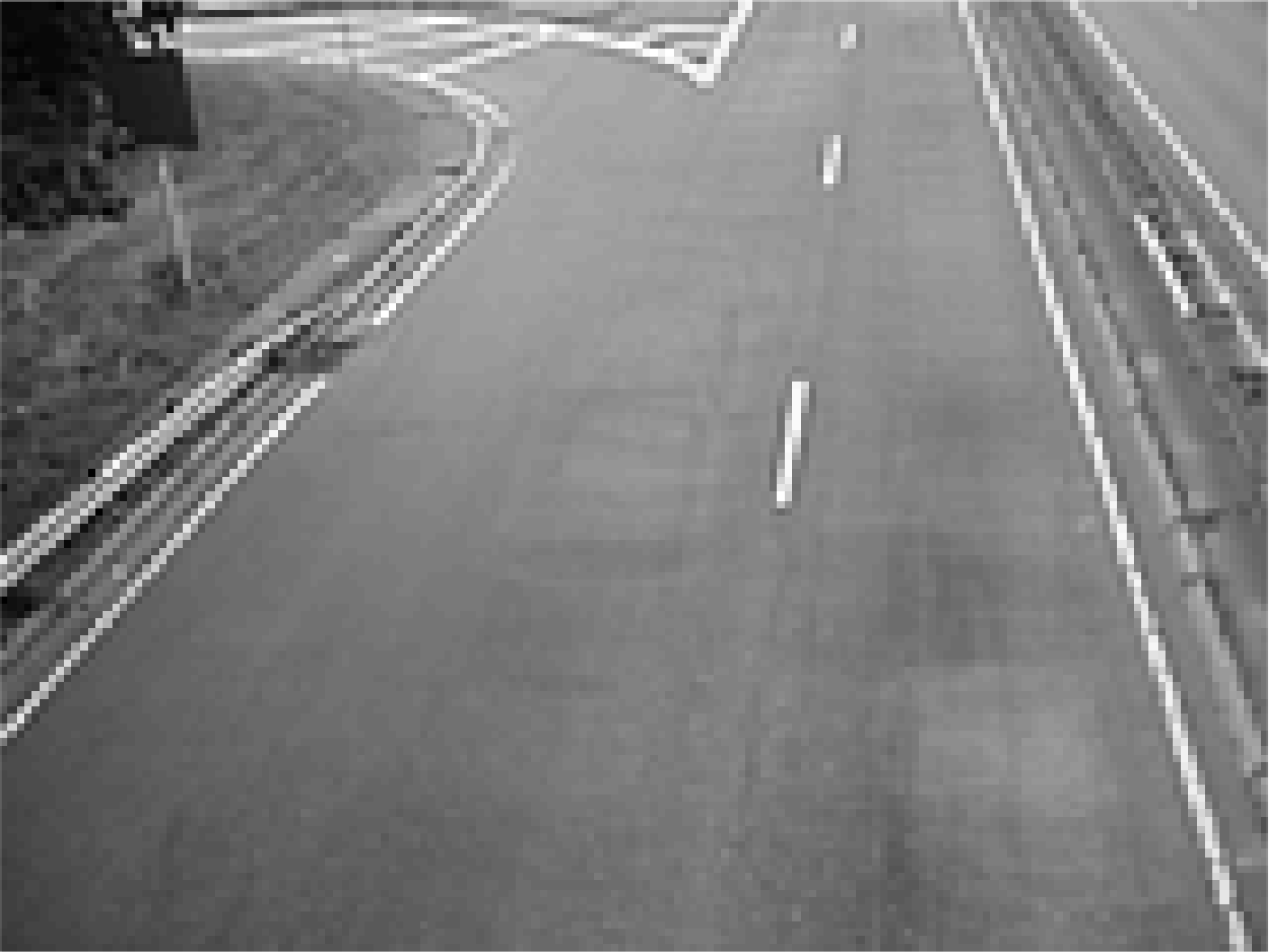}}
\subfigure[tr-HOSVD$(70,53,53)$]{\label{fig:2e} \includegraphics[height=1.60in,width=1.60in]{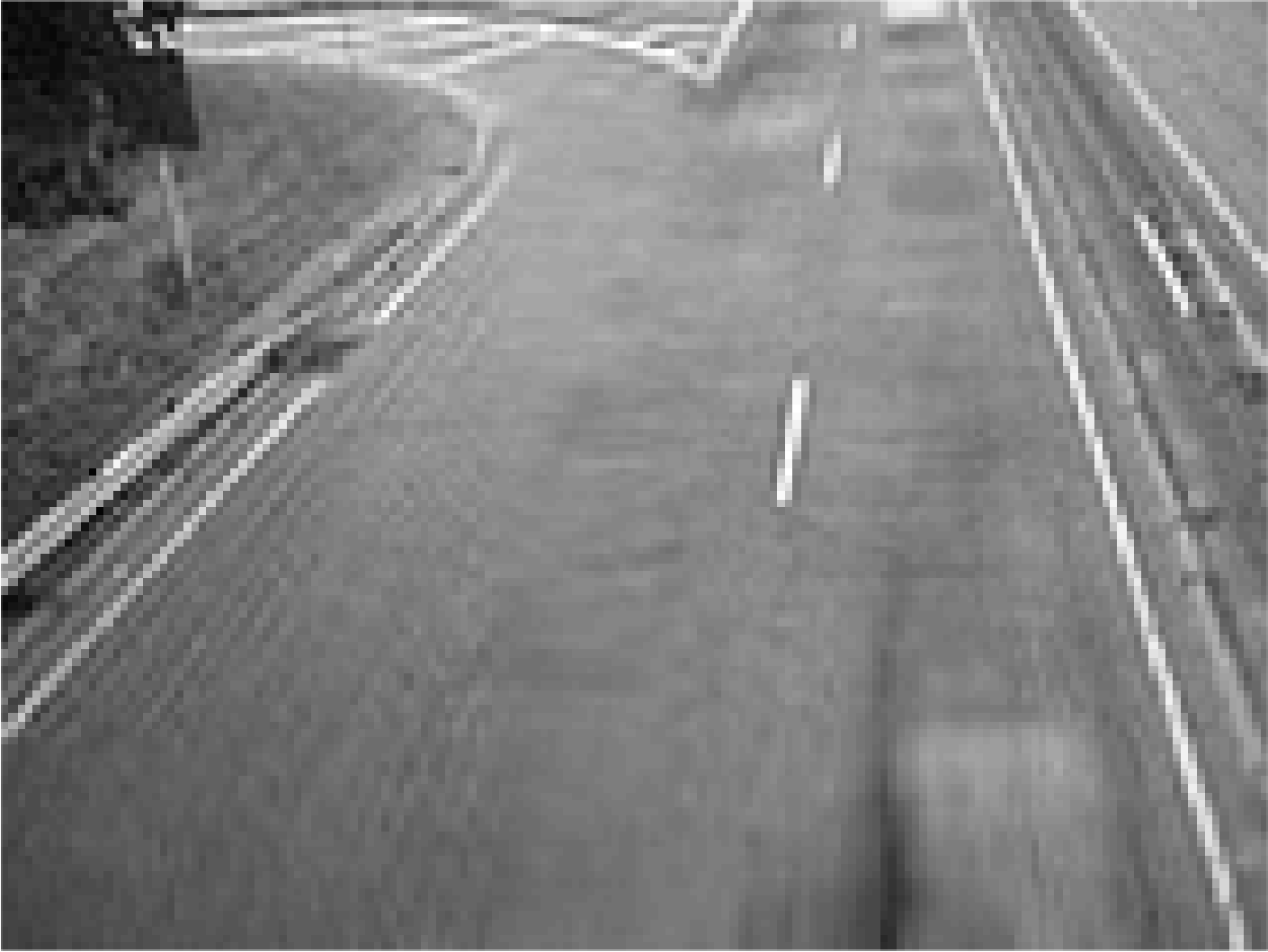}}
\caption{Frame 54, various reconstructions as indicated. }
\label{fig:car2}
\end{figure}


\subsection{Hyperspectral Imagings}

To compare t-SVDMII, truncated matrix SVD, and tr-HOSVD in terms of approximation quality and compressibility, we consider their performance on hyperspectral images.
The hyperspectral dataset consists of $191$ images of size $307 \times 1280$ where each image corresponds to a different wavelength \cite{MultiSpec}.
The images are highly-correlated spatially, and hence are highly compressible.  
We store these images in a $307\times 191\times 1280$ tensor to maximize interaction along the third dimension.

We compare the following compression schemes for the t-SVDMII with $\MM$ being the DFT matrix, the truncated matrix SVD, and the truncated HOSVD in \cref{fig:tensor_compression_comparison}. 
For the matrix SVD, we store the hyperspectral data as a $307\cdot 1280 \times 191$ matrix.  
For the HOSVD, we truncate each dimension proportionally using proportion $p$; that is, $(k_1, k_2,k_3) = (307p,191p,1280p)$.  
	\begin{figure}[H]
	\centering
	\includegraphics[scale=0.28]{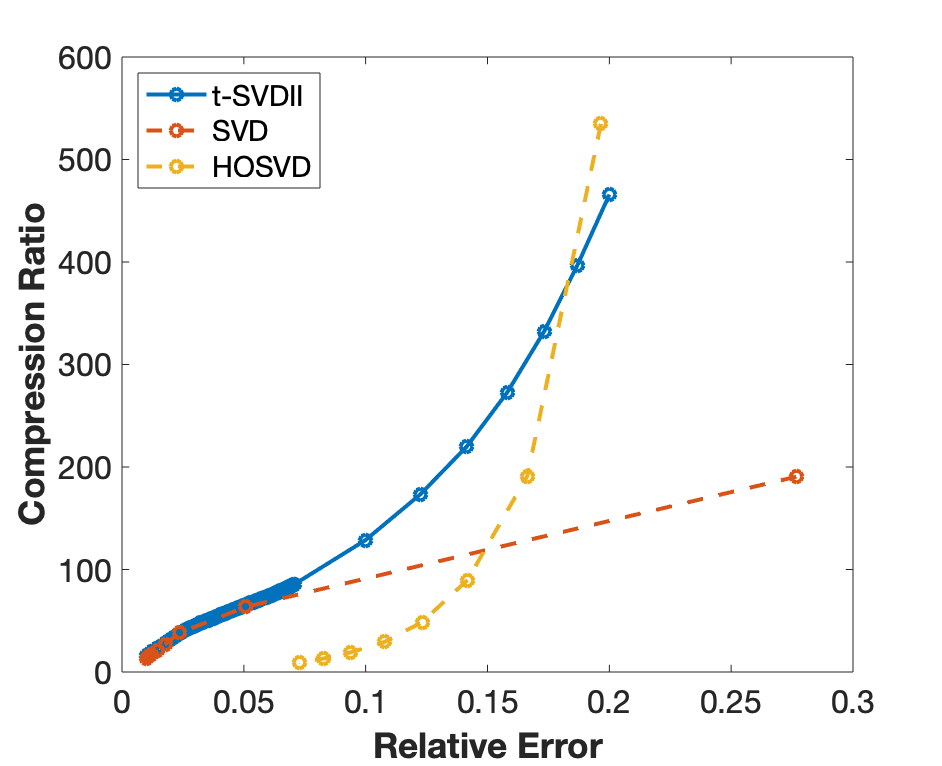}
	\caption{Comparison of various tensor compression techniques for hyperspectral images.  
	For t-SVDMII ($\MM$ is DFT matrix), 
	$\gamma$ ranges between $0.96$ and $0.995$ with step size of $5\times 10^{-3}$ and from $0.995$ to $0.9999$ with step size of $5\times 10^{-5}$.  
	For the matrix SVD, we use $k=1$ to $k$ between $1$ and $15$ with a step size of $2$.
	For the HOSVD, we compress each dimension proportionally with $p$ from $0.125$ to $0.475$ with a step size of $0.05$.  
	We plot the inverse of the compression ratio along the $y$-axis.
	}
	\label{fig:tensor_compression_comparison}
	\end{figure}

The plot in \cref{fig:tensor_compression_comparison} shows the t-SVDMII provides the best representation for the greatest compression. 
The HOSVD becomes competitive with the t-SVDMII for larger relative errors, but to obtain small relative errors, the amount of storage required increases rapidly. Here, it is important to note that the theory 
(e.g. \Cref{cor:comp}) is not directly applicable, since $\MM$ was selected to be the DFT matrix, yet we still see that the t-SVDMII is more highly compressible for the smallest relative errors.    
The truncated matrix SVD is competitive with the t-SVDMII in a relative error sense only at the lowest compression.  This is due to the nature of the hyperspectral data, and the fact that some of the spatial correlations can be identified even in the matricized format.  

In conjunction with \cref{fig:tensor_compression_comparison}, we display the compressed representations of one wavelength of the hyperspectral tensor for the t-SVDMII, the matrix SVD, and the HOSVD for comparable relative errors in \cref{fig:visualization_hyperspectral}.

\begin{figure}[H]
\centering
\small
\begin{tabular}{cccc}
Original & 
	t-SVDII, $\gamma=0.99$
	& Mtx SVD, $k=2$
	& HOSVD, $p=0.38$\\
\includegraphics[scale=0.2]{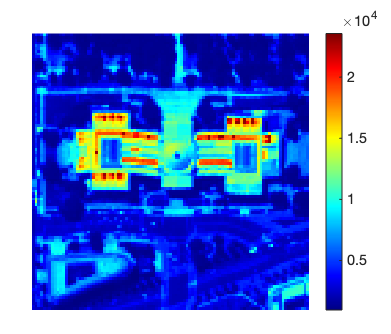} 
&\includegraphics[scale=0.2]{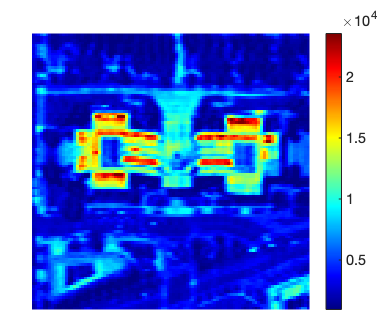} 
	& \includegraphics[scale=0.2]{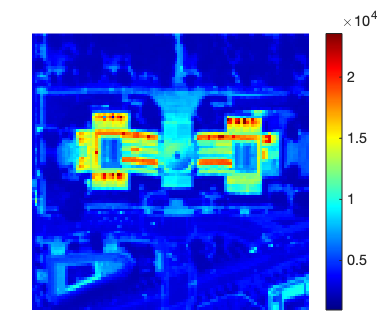} 
	& \includegraphics[scale=0.2]{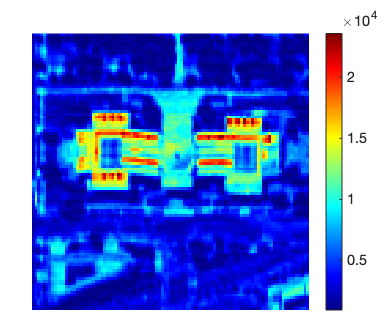} \\
&\includegraphics[scale=0.21]{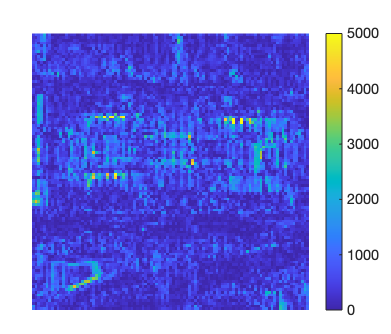} 
	& \includegraphics[scale=0.2]{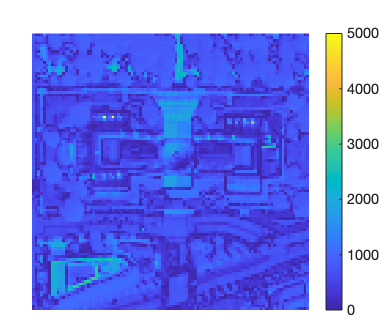}
	& \includegraphics[scale=0.2]{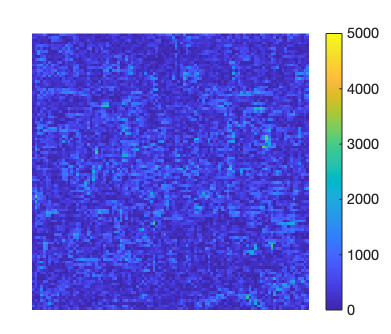}
\end{tabular}
\caption{Comparison of compressed representations with tensors and matrices for hyperspectral images.  The images above are an enlarged section of the hyperspectral image of wavelength $50$.  Top: original and compressed representations.  Bottom: absolute difference between original and compressed representation. 
The $t$-SVDII has a relative error of $0.0999$ and a compression ratio of $128.20$ using the t-product ($\MM$ is is DFT matrix).
The matrix SVD has a relative error of $0.1131$ and a compression ratio of $95.24$.
The HOSVD has a relative error of $0.0938$ and a compression ratio of $19.12$.}
\label{fig:visualization_hyperspectral}
\end{figure}

By examining the absolute difference images in \cref{fig:visualization_hyperspectral}, we notice that the matrix SVD difference image has the most visual similarity to the original image (i.e. important structure is left out of the compressed representation) and the HOSVD has the least.  
The difference image for the t-SVDMII with $\MM$ the DFT matrix has different content altogether, and this accounts for the superior relative error in the matrix case as well as the superior compressibility over the tr-HOSVD.   


\subsection{Extension to 4D and Higher}
                    
Although the algorithms were described for third-order tensors, the algorithmic approach can be extended to higher order tensors since the definitions of the tensor-tensor products extend to higher order tensors in a recursive fashion, as shown in  \cite{Martinetal2013}.  As noted in \cite{LAA}, a similar recursive construct can be used for higher order tensors for the $\starM$ product, or different combinations of transform based products can be used along different modes.  

In \cite{Martinetal2013}, the t-SVD under the t-product is also described for higher order tensors, and ideas based on its truncation for compression and high-order tensor completion and robust PCA can be found in the literature \cite{zhang2014novel,ely2015}.   For the purposes of this study, it suffices to describe the t-SVDMII process from a purely algorithmic point of view.  Note that the process is still parallelizable, and that the size of the matrix factorizations required is only $m \times p$.   Also, as the third and fourth mode matrix products used to transform the tensor into transform space are independent of each other, they can be done in either order, as is convenient.     

\begin{algorithm}[h]
\caption{\label{alg:fourd} 4D t-SVDMII}
\begin{algorithmic}[1]
\STATE  INPUT: $m \times p \times n \times q$ tensor $\TA$; invertible $n \times n$ and $q \times q$ $\MM, \MB$, respectively; energy tolerance $\gamma$. \\
\STATE OUTPUT:  Relevant entries in $\widehat{\TU}, \widehat{\TS}, \widehat{\TV}$ corresponding to $\gamma$ energy.
 \STATE Form $\widehat{\TA}$ by $\widehat{\TA} = \TA \times_3 \MM \times_4 \MB$ \\
 \FORALL{$j=1:q$}
 \FORALL{$i=1:n$}
   \STATE Compute economy matrix factorization $\MW \MD \MQ^*$ of $\widehat{\TA}_{:,:,i,j}$ \\
    \STATE Set $\widehat{\TU}_{:,:,i,j} = \MW$, $\widehat{\TS}_{:,:,i,j} = \MD$, $\widehat{\TV}_{:,:,i,j} = \MQ$; $\Delta(:,i,j)= diag(\MD).^2$ \\
  \ENDFOR 
  \ENDFOR 
 \STATE Sort entries, $\delta_i$ of $\Delta$ in decreasing order \\
 \STATE Compute the partial sums, find $k$ such that $\sum_{i} \delta_i$/$\| \TA \|_F^2 \le \gamma$.  \\
\STATE Save only components in $\widehat{\TU}, \widehat{\TS}, \widehat{\TV}$ that correspond to terms in the $k^{th}$ partial sum.  
\end{algorithmic} 
\end{algorithm} 

We present one set of results here on a subset of YaleB data.   The purpose of this experiment is simply to illustrate proof-of-concept:  we are not claiming that this is the best approach for this data. 

The data consisted of 64, $192 \times 128$ images of 4 people taken at different lighting conditions, for a total of 256 images.  Rather than treat the data as a $192 \times 256 \times 128$ data set, we took each $192 \times 128$ image, and decomposed it into patches of size $x \times y$.   Thus, each sub-image is of size $m = 192/x$ rows and $n = 128/y$ columns, so that the total number of sub-images in each image is $n_{sub}  = mn$.  This data was put into an $m \times 256 \times n \times n_{sub}$ tensor.   We applied both the t-SVDMII and t-SVDM and compare the results.   In each case, note that the size of the matrix SVDs that need to be computed in the first step of the double loop of the algorithm is size $m \times 256$.  
Some results are reported in the table, where we have used $\MM$ and $\MB$ as the unnormalized DFT matrices of size $n$ and $n_{sub}$, respectively.  

\begin{table} \label{tab:four}
\begin{tabular}{|c|c|c|c|c|} \hline
Approach & $k$ & $\gamma$ & Relative Error & Compression Ratio \\  \hline
t-SVDM ($p=6,q=2$)  & 5 &   -- &  0.06  & 5.65 \\  \hline
t-SVDMII ($p=6,q=2$) & -- & .998   &  0.045   & 8.85 \\  \hline
t-SVDM ($p=8,q=2$) & 5 & -- & 0.048 &  4.48  \\  \hline
t-SVDMII ($p=8,q=2$) & -- & .998 & 0.045 & 8.62 \\  \hline
t-SVDM ($p=8,q=2$) & 3 & -- & 0.078 & 7.25 \\  \hline
t-SVDMII ($p=8,q=2$) & -- & .994 & 0.077 & 22.73 \\  \hline
\end{tabular}
\caption{Comparison of t-SVDM and t-SVDMII \Cref{alg:fourd} on 4th order data. 
}
\end{table}
 



           \section{Conclusions and On-going Work}  \label{sec:conclusions}
We have demonstrated theoretically and numerically the significant improvement in compression possible by treating the data in high dimension form and harnessing our tensor-tensor product framework, as opposed to treating the data as a matrix.   
The t-SVDM framework was also particularly useful from a theoretical perspective in interpreting the relationship among HOSVD and tensor-tensor products.  The proofs relied on understanding the latent structure induced under the tensor-tensor products used.  Then we introduced t-SVDMII, which gave additional compression, and we see t-SVDMII outperforms t-SVDM as a compression method in general.  The choice of $\MM$ defining the tensor-tensor product should be tailored to the data for best compression.  Therefore,  consideration as for how to best design $\MM$ to suit the data set, shall be pursued in future  work.

In (\ref{eq:convexII}), we considered the convex combination of t-SVDMII expressions for $\TA$ and $\TA^{\tp}$.  The storage is clearly related to the implicit ranks, $t_1,t_2$, of $\TA_\bfrho$, and $\TA^{\tp}_\bfdel$, respectively:  the first term requires $t_1(n+p)$ and the second $t_2(m + p)$.  In future work, means for optimizing $\alpha$, $k$ and $j$ such that the upper bound on the error is minimized while minimizing the total storage, will be investigated.   Due to the connection between t-SVDMII and CP alluded to in \Cref{ssec:CP}, we postulate  that such an investigation may lead to finding even more compressed CP expressions with improved approximation capability.

\bibliography{references3}{}
\bibliographystyle{siamplain}

\end{document}